%% file: ex_article-full-2.tex
\documentclass[hidelinks,onefignum,onetabnum]{siamart251216}

\usepackage{float,subfig,dsfont,bbm}  
\usepackage{exscale}
\usepackage{relsize}
\usepackage{color,graphicx}
\usepackage{lineno,hyperref}
\usepackage{amsmath,amssymb,cases} 

\newtheorem{Definition}{Definition}
\newtheorem{Theorem}{Theorem}[section]
\newtheorem{Lemma}{Lemma}[section]
\newtheorem{Proposition}{Proposition}[section]
\newtheorem{Remark}{Remark}[section]

\input{ex_shared}

\ifpdf
\hypersetup{
  pdftitle={Linear-quadratic mixed Stackelberg-zero-sum game for mean-field regime switching system},
  pdfauthor={P. Y. Huang, N. Li, Z. Q. Xu and H. Zheng}
}
\fi

\newcommand{\E}{\mathbb{E}}

\begin{document}

\maketitle

\begin{abstract}
Motivated by a green finance problem, a linear-quadratic Stackelberg differential game for a regime switching system involving one leader and two followers is studied. The two followers engage in a zero-sum differential game, and both the state system and the cost functional incorporate a conditional mean-field term. Applying continuation method and induction method, we first establish the existence and uniqueness of the conditional mean-field forward-backward stochastic differential equations with Markovian switching. Based on it, we prove the unique solvability of Hamiltonian systems associated with the two followers and the leader. Moreover, utilizing stochastic maximum principle, decoupling approach and optimal filtering technique, we obtain the optimal feedback strategies for the two followers and the leader. Employing the theoretical results, we solve the green finance problem with some numerical simulations.
\end{abstract}

\begin{keywords}
linear-quadratic, regime switching, Stackelberg game, stochastic differential equation, zero-sum game
\end{keywords}

\begin{MSCcodes}
91A10, 91A15, 93E20
\end{MSCcodes}

\section{Introduction}

\subsection{Motivation}\label{moti}
In recent decades, it is recognized that the traditional models using continuous processes alone are often inadequate to describe the real world. Numerous systems often exhibit both continuous and discrete characteristics. To meet the practical demands, regime switching diffusion modulated by finite state Markov chain has attracted increasing attentions. A regime switching diffusion is a hybrid stochastic process $\left(X(t),\alpha_t\right)_{t\geq 0}$ containing two components: one is continuous-valued diffusion process $X(t)$, whose drift and diffusion coefficients rely on $\alpha_t$; another one is Markov chain $\alpha_t$, which represents the discrete component taking values in a set of isolated points, and it is primarily employed to describe the phenomena involving trend change or system state shift. Indeed, regime switching diffusion provides an effective tool to character the discrete event and system state shift. Some of such financial examples can be seen in Zhou and Yin \cite{ZY2003} for Markowitz mean-variance portfolio selection, Guo and Zhang \cite{Guo} for pricing perpetual American put options, Sethi and Zhang \cite{Sethi} for hierarchical decision making, Song et al. \cite{Song2011} for optimal harvesting problems, Lv et al. \cite{lvaor} for pricing game options, and so on.

{\color{black}{
Nowadays, against the backdrop of intensifying global climate change and the deepening implementation of the ``dual carbon" goals (carbon peaking and carbon neutrality), green finance has emerged as a core institutional force driving systemic socioeconomic transformation. On the one hand, green finance channels capital toward green industries and fosters the development of new quality productive forces. On the other hand, it helps identify and manage climate-related financial risks. Thus, the issue of green finance is both practically significant and academically valuable. In this work, we consider a green finance problem involving the government and two firms. The government acts as the leader, while two firms serve as followers and engage in a zero-sum game. This problem involves participants with heterogeneous statuses and intricate interplay, rendering our analysis more challenging and our contributions more substantial relative to the existing literature.

Now we present the \emph{green finance problem} in detail, which also motivates us to explore a Stackelberg differential game for regime switching system with one leader and two followers in Section \ref{problem-f}. Let $M>0$ denote the total amount of a fixed green financial resource pool, such as the central bank's green re-lending quota and the total volume of free-allocated carbon emission allowances. Within the policy cycle, this fund pool is exogenously fixed and closed-end. Assume that there are two kinds of firms competing for this limited resource, denoted by Firm 1 and Firm 2, and this competition constitutes the core of the zero-sum game between them. If Firm 2 outperforms Firm 1 in green performance, it receives a larger share of the fund $M$, while Firm 1 bears the opportunity cost (i.e., loses the subsidy); conversely, if Firm 1 successfully catches up, the rent flows from Firm 2 to Firm 1. Although the absolute profits of both firms may rise with market growth, the relative competitive surplus extracted from the fixed fund pool is strictly zero-sum. Now we assume that Firm 2 is ahead of Firm 1 and has better green performance. Let $x(\cdot) \in \mathbb R$ denote the green subsidy share difference of Firm 2 over Firm 1 as 
$$x(\cdot):= s_2(\cdot)-s_1(\cdot),$$ where $s_j(\cdot)$ denotes the subsidy share of Firm $j$, $j=1,2$. Intuitively, $x(\cdot) > 0$ means that Firm 2 outcompetes Firm 1 in subsidy allocation, \( x(\cdot) < 0 \) means the reverse, and \( x(\cdot) = 0 \) indicates a symmetric distribution. Of course, $x(\cdot)$ can also be expressed as $s_1(\cdot)-s_2(\cdot)$, representing the green subsidy share difference of Firm 1 over Firm 2.

In reality, a firm's green subsidy share is also significantly influenced by the market states. For simplicity, we assume that the market is in one of two possible states: bull or bear. We employ a Markov chain $\alpha_t$ taking values in $\mathcal M_0= \{1, 2\}$ to denote two different market regimes, i.e., bull and bear markets. In the bull market, subsidy allocation is typically tied to investment intensity, exhibiting a pronounced ``Matthew effect''. At this stage, the advantaged Firm 2 maintains a high level of investment intensity (active expansion), while lagging Firm 1 also sustains a moderate-to-high level (catching up). Given that subsidy amounts are generally commensurate with investment scale, the advantaged firm, by virtue of its larger investment base, secures a disproportionately greater share of subsidies, thereby widening the subsidy gap between the two firms. In contrast, during the bear market, these two firms universally reduce investment, thereby compressing the subsidy share disparity. Besides, the subsidy share difference $x(\cdot)$ is subject to government intervention. Meanwhile, $x(\cdot)$ is also affected by the average competitive imbalance across the industry, conditional on the market state $\alpha_t$. This conditional effect is captured by $\widehat x(\cdot)=\mathbb{E}\left[x(\cdot)|\mathcal F_{t-}^{\alpha}\right]$, where $\mathcal F_t^{\alpha}=\sigma\{\alpha_s:0\leq s\leq t\}$ and $\alpha_t\in\mathcal M_0$. With the notations and interpretations above, assume that $x(\cdot)$ satisfies a stochastic differential equation (SDE) as follows 
\begin{equation}\label{m-1}
\left\{\begin{aligned}
dx(t)=&\ \big[a_0(t,\alpha_{t-})x(t)+\bar{a}_0(t,\alpha_{t-})\widehat x(t)+b_{f,1}(t,\alpha_{t-})u_{f,1}(t)+b_{f,2}(t,\alpha_{t-})u_{f,2}(t) \\
&\  +b_l(t,\alpha_{t-})u_l(t)\big]dt+c_0(t,\alpha_{t-})x(t)dW(t), \\
x(0)=&\ x_0,  
\end{aligned}\right.
\end{equation}
where $x_0\in\mathbb R$, $u_{f,1}(\cdot), u_{f,2}(\cdot)\in\mathbb R$ are the green investment strategies of the two firms, and $u_l(\cdot)\in\mathbb R$ represents the government's strategy to preserve fair competition. $W(\cdot)$ is the Brownian motion representing the exogenous shock to green technology. The coefficients $a_0(\cdot, i), \bar{a}_0(\cdot, i), b_{f,1}(\cdot,i), b_{f,2}(\cdot,i), b_l(\cdot,i), c_0(\cdot,i)$ are deterministic with values in $\mathbb R$, $i\in\mathcal M_0$.

The cost functional for the two firms is:
\begin{equation}\begin{aligned}\label{cost-fm}
J_{f}\left(x_0; u_l(\cdot), u_{f,1}(\cdot), u_{f,2}(\cdot)\right)
=&\ \frac{1}{2}\mathbb{E}\int_0^T\Big[q_f(t,\alpha_{t-})x^2(t)+r_{f,1}(t,\alpha_{t-})u_{f,1}^2(t) \\ 
&\ -r_{f,2}(t,\alpha_{t-})u_{f,2}^2(t)\Big]dt,
\end{aligned}\end{equation}
where $q_f(\cdot,i), r_{f,1}(\cdot,i), r_{f,2}(\cdot,i)>0$, $i\in\mathcal M_0$. The payoff functional $J_{f}$ is defined as Firm 1's relative competitive disadvantage cost and Firm 2's relative advantage. Thus, Firm 1 and Firm 2 want to minimize and maximize $J_{f}$, respectively. In \eqref{cost-fm}, the term $q_f x^2$ measures the difference in green subsidy shares across the two firms. Given that Firm 1 is the lagging one, it naturally seeks to minimize the difference. The convex investment cost $r_{f,1}u_{f,1}^2$ of Firm 1 is purely an expense term, hence Firm 1 wants to minimize it. Firm 2 aims to maximize $J_{f}$, which is equivalent to choosing a strategy $u_{f,2}\in \mathcal U_{f,2}$ to minimize $-J_{f}$. The term $r_{f,2}u_{f,2}^2$ is a positive cost in $-J_{f}$ to Firm 2, and it is the monetary cost of Firm 2's investment. Firm 2 seeks to maximize $J_{f}$, implying that it intends to maximize the difference $q_f x^2$ so as to maintain its advantaged position. 


Indeed, government regulatory interventions shape these two firms' competition for subsidy shares, thereby influencing $x(\cdot)$ as well. From the government's perspective, the goal is to maintain fair competition and dynamic balance in market structure. Suppose that for these two firms, there exists an optimal strategy $\mathbbm{u}^*_{f_{1,2}}[\cdot]$ depending on $u_l(\cdot)$ and $x_0$. Then, the government wants to seek a strategy $u_{l}(\cdot)$ to minimize
\begin{equation}\label{cost-lm}
J_{l}\left(x_0; u_l(\cdot), \mathbbm{u}^*_{f_{1,2}}[u_l(\cdot),x_0]\right)
= \frac{1}{2}\E \int_0^T\left[\bar q_l(t,\alpha_{t-})\widehat x^2(t)+r_l(t,\alpha_{t-})u_l^2(t)\right]dt,  
\end{equation}
where $\bar q_l(\cdot,i), r_l(\cdot,i)>0$, $i\in\mathcal M_0$, $\bar q_l\widehat x^2$ denotes the overall green level of the market, and $r_lu_l^2$ is the social cost of frequent policy changes. The government strategies change frequently will incur institutional costs and may even disrupt the competitive ecosystem. Thus, the government aims to minimize its policy change $u_l^2$ and $\widehat x^2$ over the interval $[0,T]$, thereby ensuring social stability and achieving resource allocation equilibrium.

The above green financial problem can be modeled as a mixed linear-quadratic (LQ) Stackelberg-zero-sum game for a mean-field regime switching system, where one leader interacts with two followers and these two followers engage in a zero-sum game. We will address such problem and provide the corresponding analysis in the following.}}

\subsection{{\color{black}{Notations and problem formulation}}}\label{problem-f}
\subsubsection{{\color{black}{Notations}}}
Let $(\Omega, \mathcal{F}, \mathbb F, \mathbb{P})$ be a complete filtered probability space on which a 1-dimensional standard Brownian motion $W(\cdot)$ and a Markov chain $\alpha_{\cdot}$ are defined. The Markov chain $\alpha_{\cdot}$ takes values in a finite state space $\mathcal M=\{1,2,\cdots, m\}$. Let $Q=(\lambda_{ij})_{i,j\in\mathcal M}$ be the generator (i.e., the matrix of transition rates) of $\alpha_{\cdot}$ with $\lambda_{ij}\geq 0$ for $i\neq j$ and $\sum_{j\in\mathcal M}\lambda_{ij}=0$ for each $i\in \mathcal M$. Assume that $W(\cdot)$ and $\alpha_{\cdot}$ are independent. Let $\mathbb F=\left\{\mathcal F_t: 0\leq t\leq T\right\}$ be generated by $W(\cdot)$ and $\alpha_{\cdot}$. For a stochastic process $h(\cdot)$, we let $\widehat h(t)=\mathbb{E}\left[h(t)|\mathcal F_{t-}^{\alpha}\right]$. Let $\mathbb{R}^l$ be the $l$-dimensional Euclidean space with norm $|\cdot|$. Let $\textbf{0}$ and $\textbf{I}$ represent a zero matrix (or vector) and an identity matrix with suitable dimensions, respectively. Let $A^{\top}$ be the transpose of matrix $A$. For symmetric matrices $A, B\in\mathbb R^{n\times n}$, $A\geq B$, if $x^{\top}Ax\geq x^{\top}Bx$ for any $x\in \mathbb R^n$. For convenience, let $\mathbb{V}$ represent some spaces such as $\mathbb R^n, \mathbb R^{n\times k}$, etc., and we introduce several spaces as follows:
\begin{itemize}
\item 
$L^{\infty}(0,T;\mathbb{V})=\big\{x: [0,T]\rightarrow \mathbb{V}|x(\cdot)$ is uniformly
bounded$\big\}$,

\item
$L_{\mathbb F}^2\left(0,T; \mathbb{V}\right)=\Big\{x: [0,T]\times \Omega\rightarrow \mathbb{V}|x(\cdot)$ is $\mathbb F$-progressively measurable process and satisfies $\mathbb{E}\Big(\int_0^T|x(t)|^2dt\Big) <+\infty\Big\}$,

\item 
$\mathcal S_{\mathbb F}^2\left(0,T; \mathbb{V}\right)=\Big\{x: [0,T]\times \Omega\rightarrow \mathbb{V}|x(\cdot)$ is $\mathbb F$-adapted c$\grave{a}$dl$\grave{a}$g process and satisfies
$\mathbb{E}\Big(\sup\limits_{t\in[0,T]}|x(t)|^2\Big)<+\infty\Big\}$,

\item
\noindent $L^2_{\mathcal F_T}\left(\Omega; \mathbb{V}\right)=\big\{\xi: \Omega\rightarrow \mathbb{V}|\xi$ is $\mathcal F_T$-measurable random variable and satisfies $ \mathbb E|\xi|^2<+\infty\big\}$,

\item
\noindent $L_{\mathbb F}^2\left(\Omega; C\left([0,T]; H\right)\right)=\Big\{x: [0,T]\times \Omega\rightarrow H|x(\cdot)$ is $\mathbb F$-progressively measurable process valued in $H$, such that for almost all $\omega\in \Omega$, $t\rightarrow x(t,\omega)$ is continuous and $\mathbb{E}\Big(\sup\limits_{t\in[0,T]}|x(t)|^2\Big)<+\infty\Big\},$

\item
\noindent $L_{\mathbb F}^2\left(\Omega; L\left(0,T; H\right)\right)=\Big\{x: [0,T]\times \Omega\rightarrow H |x(\cdot)$ is $\mathbb F$-progressively measurable process valued in $H$, such that $\mathbb{E}\left(\int_0^T|g(t)|dt\right)^2<+\infty\Big\}$,

\item
$ N_{\mathbb F}^2\left(0,T; \mathbb R^{3n}\right)=
L_{\mathbb F}^2\left(\Omega; C\left([0,T]; \mathbb R^n\right)\right)\times
\mathcal S_{\mathbb F}^2\left(0,T; \mathbb R^n\right)\times
L_{\mathbb F}^2\left(0,T; \mathbb R^n\right)$,

\item
$\mathcal N_{\mathbb F}^2\left(0,T; \mathbb R^{3n}\right)=
L_{\mathbb F}^2\left(\Omega; L\left(0,T; \mathbb{R}^n\right)\right)\times
L_{\mathbb F}^2\left(\Omega; L\left(0,T; \mathbb{R}^n\right)\right)\times
L_{\mathbb F}^2\left(0,T; \mathbb{R}^n\right)$,

\item
$\mathcal H[0,T]=\mathbb R^n \times L^2_{\mathcal F_T}\left(\Omega; \mathbb R^n\right) \times
\left(\mathcal N_{\mathbb F}^2\left(0,T; \mathbb R^{3n}\right)\right)^2$.
\end{itemize}

For each pair $(i,j)\in \mathcal M\times \mathcal M$ with $i\neq j$, let
$[M_{ij}](t)=\sum_{0\leq s\leq t}\mathbf{1}_{(\alpha_{s-}=i)}\mathbf{1}_{(\alpha_s=j)},$ 
$\langle M_{ij}\rangle(t)=\int_0^t \lambda_{ij}\mathbf{1}_{(\alpha_{s-}=i)}ds,$
where $\mathbf{1}$ represents the indicator function. It follows from Nguyen et al. \cite{Yin2020} that the process $M_{ij}(t)=[M_{ij}](t)-\langle M_{ij}\rangle(t)$ is a purely discontinuous and square-integrable martingale with respect to $\mathcal F_t^{\alpha}$, which is null at the origin. Besides, we denote $M_{ii}(t)=[M_{ii}](t)=\langle M_{ii}\rangle(t)=0$ for each $i\in\mathcal M$ and $t\geq 0$. Define\\
$\mathcal M_{\mathbb F}^2\left(0,T; \mathbb R\right)=\Big\{k=(k_{ij}: i,j\in\mathcal M)|$
$k_{ij}$ is $\mathbb F$-progressively measurable process, $k_{ii}\equiv0$ for $i,j\in\mathcal M$, and $\sum_{i,j\in\mathcal M}\mathbb{E}\Big(\int_0^T|k_{ij}(t)|^2d[M_{ij}](t)\Big)<+\infty\Big\}$.\\
For convenience, let $h(\cdot)=\left(h_{ij}(\cdot)\right)_{i,j\in\mathcal M}$ and
$h(s)\bullet dM(s)=\sum_{i,j\in\mathcal M}h_{ij}(s)dM_{ij}(s).$

\subsubsection{Problem formulation}

Motivated by the green finance problem, we consider a more general Stackelberg differential game of regime switching system with one leader and two followers in this work. Let the state of the system be described by controlled linear conditional mean-field SDE with regime switching:\\
\vspace*{-0.5cm}
\begin{equation}\label{state}
\left\{\begin{aligned}
dx(t)=&\ \big[A(t,\alpha_{t-})x(t)+\bar{A}(t,\alpha_{t-})\widehat x(t)+B_L(t,\alpha_{t-})u_L(t)\\
&\ +\mathbf{B_F}(t,\alpha_{t-})\mathbf{u_{F_{1,2}}}(t)+b(t,\alpha_{t-})\big]dt+\big[C(t,\alpha_{t-})x(t)
+\bar C(t,\alpha_{t-})\widehat x(t) \\
&\ +D_L(t,\alpha_{t-})u_L(t)+\mathbf{D_F}(t,\alpha_{t-})\mathbf{u_{F_{1,2}}}(t)
+\sigma(t,\alpha_{t-})\big]dW(t), \\
x(0)=&\ x_0,
\end{aligned}\right.
\end{equation}
where
$$
\mathbf{B_F}=\left(B_{F,1}, B_{F,2}\right),\quad
\mathbf{D_F}=\left(D_{F,1}, D_{F,2}\right),\quad
\mathbf{u_{F_{1,2}}}=\left(u_{F,1}^{\top}, u_{F,2}^{\top}\right)^{\top},
$$
$u_L(\cdot)$ is the strategy process taken by the leader with values in $\mathbb{R}^{m_0}$, and $u_{F,1}(\cdot), u_{F,2}(\cdot)$ are the strategy processes taken by the two followers with values in $\mathbb{R}^{m_1}$ and $\mathbb{R}^{m_2}$, respectively; $x(\cdot)$ is the state process corresponding to $\left(u_L(\cdot),u_{F,1}(\cdot), u_{F,2}(\cdot)\right)$, $x_0 \in \mathbb{R}$. Moreover, for any $i\in \mathcal M$, $A(\cdot,i), \bar{A}(\cdot,i), C(\cdot,i),$ $\bar C(\cdot,i)\in L^{\infty}(0,T;{\mathbb R})$, $B_L(\cdot,i), D_{L}(\cdot,i)$ $\in L^{\infty}(0,T;\mathbb R^{1\times m_0}),$  $B_{F,1}(\cdot,i), D_{F,1}(\cdot,i)\in L^{\infty}(0,T;\mathbb R^{1\times m_1}),$ $B_{F,2}(\cdot,i), D_{F,2}(\cdot,i)\in L^{\infty}(0,T;\mathbb R^{1\times m_2})$, $b(\cdot,i)$ and $\sigma(\cdot,i)$ are $\mathcal F_t$-adapted processes with one-dimension.

Define
$$\mathcal U_L=\left\{u_L(\cdot)|u_L(\cdot)\in L_{\mathcal F}^2\big(0,T; \mathbb R^{m_0}\big)\right\},
\quad \mathcal U_{F,j}=\left\{u_{F,j}(\cdot)|u_{F,j}(\cdot)\in L_{\mathcal F}^2\big(0,T; \mathbb R^{m_j}\big)\right\}, $$
$j=1, 2$. For simplicity, let $\mathbf{\mathcal U_{F_{1,2}}}=\mathcal U_{F,1}\times\mathcal U_{F,2}$.
By virtue of Lemma 2.1 in Nguyen et al. \cite{Yin2020}, for any given $\left(u_L(\cdot), \mathbf{u_{F_{1,2}}(\cdot)}\right)\in \mathcal U_L\times\mathbf{\mathcal U_{F_{1,2}}}$, \eqref{state} is uniquely solvable. An admissible strategy for the two followers is a mapping $\mathbbm{u}_{F}: \mathcal U_L\times \mathbb R \rightarrow \mathcal U_{F,1}\times \mathcal U_{F,2},$ and the collection of all such strategies is denoted by $\mathcal{U}_{F}[0,T]$.

The cost functional for two followers is
\begin{equation}\label{cost-f}
\begin{aligned}
& J_F\left(x_0; u_L(\cdot), \mathbf{u_{F_{1,2}}}(\cdot)\right)
= \frac{1}{2}\mathbb{E}\bigg\{\int_0^T\Big[Q_F(t,\alpha_{t-})x^2(t)+\bar Q_F(t,\alpha_{t-})\widehat x^2(t) \\
&\qquad +\mathbf{u^{\top}_{F_{1,2}}}(t)\mathbf{R_F}(t,\alpha_{t-})\mathbf{u_{F_{1,2}}}(t)\Big]dt 
  +G_F(\alpha_{T})x^2(T)+\bar G_F(\alpha_{T})\widehat x^2(T)\bigg\} 	
\end{aligned}
\end{equation}
with
$$\mathbf{R_F}=\left(\begin{array}{ll}
R_{F,1} & S_F \\
S_F^{\top} & R_{F,2}
\end{array}\right). $$
The cost functional for the leader is
\begin{equation}\label{cost-l}
\begin{aligned}
& J_L\left(x_0; u_L(\cdot), \mathbf{u_{F_{1,2}}}(\cdot)\right)= \frac{1}{2}\mathbb{E}\bigg\{\int_0^T\Big[Q_L(t,\alpha_{t-})x^2(t)
+\bar Q_L(t,\alpha_{t-})\widehat x^2(t) \\ 
&\qquad\quad +u_{L}^{\top}(t)R_{L}(t,\alpha_{t-})u_{L}(t)\Big]dt
 +G_L(\alpha_{T})x^2(T)+\bar G_L(\alpha_{T})\widehat x^2(T)\bigg\}.	
\end{aligned}
\end{equation}
In \eqref{cost-f}-\eqref{cost-l}, $S_F(\cdot,i)\in L^{\infty}(0,T;\mathbb R^{m_1\times m_2}), R_{F,j}(\cdot,i)\in L^{\infty}(0,T;\mathbb R^{m_j\times m_j})$ ($j=1,2)$, $R_{L}(\cdot,i)$ $\in L^{\infty}(0,T;\mathbb R^{m_0\times m_0})$ are symmetric, and $Q_F(\cdot,i), \bar Q_F(\cdot,i), Q_L(\cdot,i), \bar Q_L(\cdot,i)$,\\ $G_F(i), \bar G_F(i), G_L(i), \bar G_L(i)\in L^{\infty}(0,T;\mathbb R)$, $i\in\mathcal M$; $R_{F,1}(\cdot,i)$ and $R_{F,2}(\cdot,i)$ are positive and negative definite, respectively, $i\in\mathcal M$.

For the two followers, we assume that cost functional \eqref{cost-f} is the loss of Follower 1 and the gain of Follower 2. Therefore, Follower 1 wants to minimize \eqref{cost-f} by seeking a strategy $u^*_{F,1}(\cdot)\in\mathcal U_{F,1}$, while Follower 2 wants to maximize \eqref{cost-f} by finding a strategy $u^*_{F,2}(\cdot)\in\mathcal U_{F,2}$. For any $u_L(\cdot)\in\mathcal U_L$, the two followers aim to find an admissible strategy $\mathbbm{u}^*_{F_{1,2}}[\cdot]\in\mathcal{U}_{F}[0,T]$ such that
\begin{equation}\begin{aligned}\label{saddle}
J_F\left(x_0; u_L(\cdot), u_{F,1}^*(\cdot), u_{F,2}(\cdot)\right)
\leq&\ J_F\left(x_0; u_L(\cdot), \mathbf{u^*_{F_{1,2}}}(\cdot)\right) \\
\leq&\ J_F\left(x_0; u_L(\cdot), u_{F,1}(\cdot), u_{F,2}^*(\cdot)\right),
\end{aligned}\end{equation}
where $\mathbf{u^*_{F_{1,2}}}(\cdot)=\mathbbm{u}^*_{F_{1,2}}[u_L(\cdot),x_0]$ is the outcome of the strategy $\mathbbm{u}^*_{F_{1,2}}$. The above problem involving two followers constitutes a zero-sum differential game, denoted by \textbf{Problem (ZSDG)}. If we can find an admissible strategy $\mathbbm{u}^*_{F_{1,2}}[\cdot]$ satisfying \eqref{saddle}, then we call it an open-loop saddle point of Problem (ZSDG).

Assume that Problem (ZSDG) is well-posed and the optimal strategy $\mathbbm{u}^*_{F_{1,2}}[\cdot]$ exists, the leader wants to seek a strategy $u_L^*(\cdot)\in\mathcal U_L$ satisfying
\begin{equation}\begin{aligned}\label{sc}
J_L\left(x_0; u_L^*(\cdot), \mathbbm{u}^*_{F_{1,2}}[u_L^*(\cdot),x_0]\right)  
= \min_{u_L(\cdot)\in\mathcal{U}_L}J_L\left(x_0; u_L(\cdot), \mathbbm{u}^*_{F_{1,2}}[u_L(\cdot),x_0]\right),
\end{aligned}\end{equation}
subject to \eqref{state} and \eqref{cost-l}. For convenience, the problem for the leader can be seen as the leader's optimal control problem, denoted as \textbf{Problem (LOC)}.

We call the above optimization problem \eqref{state}-\eqref{sc} a stochastic Stackelberg differential game (SDG) for conditional mean-field switching diffusion with one leader and two followers, denote by \textbf{Problem (CMF-SDG)}, in which two followers formulate a zero-sum differential game. If $\big(u_L^*(\cdot), \mathbf{u^*_{F_{1,2}}}(\cdot)\big)\in \mathcal U_L \times \mathbf{\mathcal U_{F_{1,2}}}$ exists, we call it a Stackelberg equilibrium of Problem (CMF-SDG).

\subsection{Literature review and contributions of this paper}
Differential game has attracted considerable interest from diverse areas owing to its important theoretical and practical significance. Differential game was initiated by Isaacs \cite{Isa}, which involves multiple players whose strategies interact dynamically. In 1952, Stackelberg \cite{Sta} introduced Stackelberg game, in  which the leader claims a strategy first, then the follower optimizes his cost functional based on the announcement taken by the leader. Due to the applications in macroeconomics, management sciences, social sciences and biology, Stackelberg game has emerged as a widely studied topic. Castanon and Athans \cite{Cas} focused on a Stackelberg game of the stochastic system, and derived a feedback form for Stackelberg solution. Yong \cite{Yong2002} studied a Stackelberg differential game in the LQ setting, where the coefficients of dynamic system and the cost functionals are random, and derived the open-loop strategies. Li and Yu \cite{LN2018} studied a more generalized Stackelberg game with $N$-level hierarchy, and obtained the unique equilibrium in a closed form. Ishida and Shimemura \cite{Is1, Is2} investigated a kind of Stackelberg game of coupled differential equations composed of lumped and distributed parameter subsystems in deterministic cases, and derived the necessary and sufficient conditions for the open-loop Stackelberg strategies. Feng et al. \cite{Feng} considered a backward Stackelberg differential game with constraints, and obtained an open-loop Stackelberg equilibrium represented by some coupled backward-forward stochastic differential systems with mixed initial-terminal conditions. Huang et al. \cite{HWW} presented two types of modeling uncertainties in a stochastic LQ Stackelberg differential game, and discussed the associated robust Stackelberg strategy design for either the leader or follower. For more information, see \cite{Moon, WS, ZSS, AMC}.

Note that, in the green finance problem involving the government and two firms presented in Subsection \ref{moti}, the relationship among the players involves not only a hierarchical (leader-follower) structure, but also an additional zero-sum game between the two firms. Examples of zero-sum game also include gambling, and options and futures contracts. In fact, zero-sum differential game is also an important branch of game theory. The initial work of zero-sum differential game can be traced back to the work of Ho et al. \cite{Ho}. Then, Fleming and Souganidis \cite{Fle} gave the existence of value function for a two-player zero-sum stochastic differential game. Buckdahn and Li \cite{Buck} investigated a zero-sum stochastic differential game using backward stochastic differential equation (BSDE), which generalized the results obtained in \cite{Fle}. Yu \cite{YZY} investigated a zero-sum stochastic LQ differential game and, by exploiting the special structure of the LQ system, derived an optimal feedback control-strategy pair by solving the associated Riccati equation. Moon \cite{moon1} extended the results of \cite{YZY} to Markov jump systems, and characterized the explicit Nash equilibrium and obtained the optimal game value. Xu et al. \cite{XSZ} studied a class of stochastic LQ leader-follower differential games with a delay in the leader's control, and obtained the open-loop solution. Besides, \cite{Sun2023, 2014SJR} discussed zero-sum differential game in different frameworks.

Notably, in most of the works mentioned above, Stackelberg game and zero-sum game are driven by stochastic systems that incorporate neither abrupt environmental changes nor trend shifts. However, in practical scenarios such as financial markets and ecological systems, the ambient environment is dynamic and continuously evolving. For instance, telegraph noise may perturb epidemic models and induce the system to switch among different environmental regimes. Similarly, the return and volatility of stocks typically differ between bull and bear markets. Nevertheless, an SDE can not adequately characterize such phenomena. Unlike traditional system, regime switching models can accurately capture the infrequent yet impactful behaviors that shape the system's long-term evolution, which have attracted significant research interest in stochastic control theory and its financial applications. Hao et al. \cite{Hao} investigated the maximum principle for a stochastic control problem involving a Markov chain and model uncertainty, and gave the necessary and sufficient conditions for stochastic optimal control. Zhang and Xu \cite{ZPP} investigated a zero-sum game with regime switching. By virtue of a new multidimensional indefinite stochastic Riccati equations together with multidimensional linear BSDEs, \cite{ZPP} derived closed-loop optimal feedback control-strategy pairs for the two players. Wang and Zhang \cite{WBC1} addressed the problem of social optimization in a Markovian setting and gave an asymptotically team-optimal solution.  Lv et al. \cite{lvaor} solved the Dynkin game for switching diffusions in an infinite horizon, and proved that the value of the game is the unique viscosity solution of associated variational inequalities.  See \cite{cath, Ji, lamo, LSY2020, ZQ} for more details.

Motivated by above, we study an LQ mixed Stackelberg differential game for regime switching system with one leader and two followers, in which two followers constitute a zero-sum game. This problem is of important values in both theoretical and practical aspects. To our best knowledge, there exists little literature along this line. Compared with the existing works, the distinctive contributions of this paper are summarized as follows.
\begin{itemize}  
\item
We propose a new type of Stackelberg differential game stemming from the green finance problem, and we give the existence and uniqueness for a system of coupled Riccati equations with Markov chain. Different from the works of Lv et al. \cite{Lv2023}, Shi et al. \cite{WS}, Sun \cite{Sun2021}, Wang et al. \cite{ZSS}, Yong \cite{Yong2002}, the two followers in our work give rise to a system of coupled Riccati equations involving Markov chain. Due to the fact that these Riccati equations are coupled with each other via an interaction term, thus they can not be solved by the traditional Riccati equation theory introduced in Yong and Zhou \cite{Yongzhou}. To do this, introducing some auxiliary equations, presenting some estimates and employing the monotone bounded theorem, the unique solvability of such coupled Riccati equations with Markov chain is derived.

\item
We obtain the unique solvability of conditional mean-field forward-backward stochastic differential equations (CMF-FBSDEs) with respect to regime switc-\\hing. Compared with Li and Yu \cite{LN2018}, Li and Wang \cite{LAMO}, Moon \cite{Moon}, Shi et al. \cite{WS}, the Hamiltonian system for the leader in this paper is of different type and more complicated, which contains two coupled CMF-FBSDEs with regime switching and is extremely challenging to derive its solvability. By introducing domination-monotonicity conditions with conditional expectation, we apply continuation method and induction method to prove the unique solvability of such CMF-FBSDEs. Then, we discuss the Hamiltonian systems of the leader and two followers, which is usually absent in existing research works, such as Shi et al. \cite{WS}, Wang and Yan \cite{AMC}, Wang et al. \cite{ZSS}, etc.

\item
We give the open-loop forms and their closed-loop representations of the optimal strategies for the two followers and the leader, respectively. By virtue of stochastic maximum principle, one first derives the open-loop strategies of the two followers and the leader, which depend on the associated Hamiltonian systems. Further, exploring the relation between the optimal strategy and the optimal state explicitly, introducing some Riccati equations with Markov chain, we obtain the feedback representations of the optimal strategies for the two followers and the leader, not just the open-loop form.

\end{itemize}

The reminder of this paper is organized as follows. We study the unique solvability of the CMF-FBSDEs with Markovian switching in Section \ref{S3}, which plays an important role in analyzing the Hamiltonian systems for the mixed Stackelberg game. Furthermore, we investigate an LQ Stackelberg game with regime switching involving one leader and two followers in Section \ref{S4}. The optimal strategies in the open-loop forms and closed-loop representations are also obtained. We solve the green finance problem in Section \ref{S5}, and give some concluding remarks in Section \ref{S6}. The appendix contains the proofs of the main results.


\section{Unique solvability of CMF-FBSDEs with regime switching}\label{S3}
In this section, we are devoted to investigating the unique solvability of CMF-FBSDEs with regime switching. For LQ Stackelberg game discussed in Section \ref{S4}, the Hamiltonian systems of the two followers and the leader consist of {\color{black}{fully coupled }} CMF-FBSDE and two fully coupled CMF-FBSDEs with regime switching, respectively. Thus, we first discuss the solvability of the nonlinear CMF-FBSDEs with regime switching, which plays a key role in studying the mixed Stackelberg-zero-sum game below.

Consider the nonlinear CMF-FBSDEs with regime switching as follows:
\begin{equation}\label{fbsde}
\left\{\begin{aligned}
dx_j(t)=&\ b_j\Big(t,\Theta(t),\widehat\Theta(t), \alpha_{t-}\Big)dt+\sigma_j\Big(t,\Theta(t),\widehat\Theta(t),\alpha_{t-}\Big)dW(t),\\
dy_j(t)=&\ g_j\left(t,\Theta(t),\widehat\Theta(t),\alpha_{t-}\right)dt+z_j(t)dW(t)+\tilde z_j(t)\bullet dM(t),\\
x_j(0)=&\ \Psi_j\big({\color{black}{\textbf{y}}}(0)\big), \quad y_j(T)=\Phi_j\big({\color{black}{\textbf{x}}}(T),\widehat{\color{black}{{\textbf{x}}}}(T),\alpha_{T}\big),\quad  j=1,2,
\end{aligned}\right.
\end{equation}
where $\Psi_j: \mathbb R^{n}\rightarrow \mathbb R^n$, $\Phi_j: \mathbb R^{2n}\times\mathcal M \rightarrow \mathbb R^n$, $b_j,\sigma_j,g_j:[0,T]\times \mathbb R^{6n}\times\mathcal M \rightarrow \mathbb R^n$, and
{\color{black}{
\begin{equation} 
\left\{\begin{aligned}
&
\Theta(\cdot)=\left(\Theta_1^{\top}(\cdot), \Theta_2^{\top}(\cdot)\right)^{\top},\quad 
\widehat\Theta(\cdot)=\left(\widehat{\Theta}_1^{\top}(\cdot), 
\widehat{\Theta}_2^{\top}(\cdot)\right)^{\top}, \\
& \Theta_j(\cdot)=\left(x_j^{\top}(\cdot), y_j^{\top}(\cdot), z_j^{\top}(\cdot)\right)^{\top},\quad
\widehat{\Theta}_j(\cdot)=\left(\widehat{x}_j^{\top}(\cdot), \widehat{y}_j^{\top}(\cdot), \widehat{z}_j^{\top}(\cdot)\right)^{\top},\quad j=1,2,   \\
&
\textbf{x}(T)=\left(x_1^{\top}(T), x_2^{\top}(T)\right)^{\top},\quad
\widehat{\textbf{x}}(T)=\left(\widehat{x}_1^{\top}(T), \widehat{x}_2^{\top}(T)\right)^{\top},\quad
\textbf{y}(0)=\left(y_1^{\top}(0),y_2^{\top}(0)\right)^{\top}.\nonumber
\end{aligned}\right.
\end{equation}

Note that in \eqref{fbsde}, both SDEs and BSDEs contain the forward parts $x_j(\cdot)$ and the backward parts $\left(y_j(\cdot), z_j(\cdot), \tilde z_j(\cdot)\right)$ for $j=1,2$, through the coupling terms $\Theta(\cdot)$ and $\widehat\Theta(\cdot)$. Consequently, \eqref{fbsde} is a system of two fully coupled CMF-FBSDEs. }}

For simplicity, let
\begin{equation} 
\left\{\begin{aligned}
&\Gamma_j=\left(g_j^{\top}, b_j^{\top}, \sigma_j^{\top}\right)^{\top},\quad j=1,2,\\
&\Gamma=\left(\Gamma_1^{\top}, \Gamma_2^{\top}\right)^{\top},\quad
\Psi=\left(\Psi_1^{\top}, \Psi_2^{\top}\right)^{\top},\quad 
\Phi=\left(\Phi_1^{\top},\Phi_2^{\top}\right)^{\top}. \nonumber
\end{aligned}\right.
\end{equation}

Now we give the Lipschitz condition and domination-monotonicity conditions. 

\noindent\textbf{Assumption (A1)}
1) For any ${\color{black}{\textbf{x}, \textbf{y}}}\in\mathbb R^{2n}$ and any $i\in\mathcal M$, $\Psi({\color{black}{\textbf{y}}})$ is deterministic and $\Phi({\color{black}{\textbf{x}}},\widehat{{\color{black}{\textbf{x}}}},i)$ is $\mathcal F_T$-measurable. For any $\Theta\in\mathbb R^{6n}$ and any $i\in\mathcal M$, $\Gamma\big(\cdot,\Theta,\widehat\Theta,i\big)$ is $\mathbb F$-progressively measurable. Besides, $\Psi(\mathbf{0})\in \mathbb R^n$, $\Phi(\mathbf{0},\mathbf{0},i)\in  L^2_{\mathcal F_T}\left(\Omega; \mathbb R^{n}\right)$, and $\Gamma(\cdot,\mathbf{0},\mathbf{0},i)\in N_{\mathbb F}^2\left(0,T; \mathbb R^{3n}\right)\times N_{\mathbb F}^2\left(0,T; \mathbb R^{3n}\right)$.  2) For any $i\in\mathcal M$, $\Psi, \Phi$ and $\Gamma$ are uniformly Lipschitz continuous with respect to ${\color{black}{\textbf{y}}},  ({\color{black}{\textbf{x}}},\widehat{{\color{black}{\textbf{x}}}})$ and $\big(\Theta,\widehat\Theta\big)$, respectively.

\noindent\textbf{Assumption  (A2)}
There exist two nonnegative constants $\mu_1\geq 0, \nu_1\geq 0$ satisfying $\mu_1> 0, \nu_1= 0$ or 
$\mu_1=0, \nu_1>0$, constants $\mu_2>0$, {\color{black}{$K_0>0$, $K_0'>0$}}, for any $i\in\mathcal M$ and $j=1,2$, matrices $\widetilde M_j,\widetilde G_j(i), \widetilde G_j'(i), \widetilde A_1(\cdot,i), \widetilde A_1'(\cdot,i), \widetilde B_1(\cdot,i), \widetilde B_1'(\cdot,i), \widetilde C_1(\cdot,i)$, $\widetilde C_1'(\cdot,i), \widetilde L_2(\cdot,i), \widetilde L_2'(\cdot,i)$ with suitable dimensions, where \
$\widetilde L_2(\cdot,i)=\big(\widetilde A_2(\cdot,i),\widetilde B_2(\cdot,i),\\ \widetilde C_2(\cdot,i)\big)$ and $\widetilde L_2'(\cdot,i)=\big(\widetilde A_2'(\cdot,i),\widetilde B_2'(\cdot,i),\widetilde C_2'(\cdot,i)\big)$, such that

1) (domination condition)
For any $(\omega, t,i)\in\Omega \times [0,T]\times \mathcal M$, and any $\Theta_j,\widehat{\Theta}_j, \Theta_j',$\\
$\widehat\Theta_j'\in{\mathbb R}^{3n}$,  $j=1,2$,
\begin{equation}\label{2.2}
\left\{\begin{aligned}
& \left|\Psi_j\left(y_j,y_{3-j}\right)-\Psi_j\left(y_j',y_{3-j}\right)\right|
\leq \frac{1}{\mu_1}\left|\widetilde M_1\Delta y_j\right|,\\
& \left|\Phi_j\left(x_j,\widehat x_j,x_{3-j},{\widehat x}_{3-j},i\right)-\Phi_j\left(x_j',\widehat x_j',x_{3-j},{\widehat x}_{3-j},i\right)\right|  \\
&\qquad\qquad\qquad\qquad\qquad\qquad\qquad\qquad \qquad\quad\quad
\leq {\color{black}{K_0'}}\left|\widetilde G_1(i)\Delta x_j
+\widetilde G_1'(i)\Delta \widehat x_j\right|,\\
& \left|g_j\Big(t,x_j,y_j,z_j,\widehat x_j,\widehat y_j,\widehat z_j,\Theta_{3-j},\widehat\Theta_{3-j},i\Big)-
g_j\Big(t,x_j',y_j,z_j,\widehat x_j',\widehat y_j,\widehat z_j,\Theta_{3-j},\widehat\Theta_{3-j},i\Big)\right|\\
&\qquad\qquad\qquad\qquad\qquad\qquad\qquad\qquad \qquad\quad\quad
\leq {\color{black}{K_0'}}\left|\widetilde A_1(t,i)\Delta x_j
+\widetilde A_1'(t,i)\Delta {\widehat x}_j\right|,\\
&\left|f_j\Big(t,x_j,y_j,z_j,\widehat x_j,\widehat y_j,\widehat z_j,\Theta_{3-j},\widehat\Theta_{3-j},i\Big)-
f_j\Big(t,x_j,y_j',z_j',\widehat x_j,\widehat y_j',\widehat z_j',
\Theta_{3-j},\widehat\Theta_{3-j},i\Big)\right| \\
&\qquad\qquad\qquad\quad\quad
\leq \frac{1}{\mu_1}\Big|\widetilde B_1(t,i)\Delta y_j+\widetilde B_1'(t,i)\Delta \widehat y_j 
 +\widetilde C_1(t,i)\Delta z_j+\widetilde C_1'(t,i)\Delta \widehat z_j\Big|,  \\
\end{aligned}\right.
\end{equation}
and 
\begin{equation}\label{A3-1}
\left\{\begin{aligned}
& \left|\Psi_1\left(y_1,y_2\right)-\Psi_1\left(y_1,y_2'\right)\right|
\leq {\color{black}{K_0}}\left|\widetilde M_2\Delta y_2\right|,\\
& \left|\Phi_1\left(x_1,\widehat x_1,x_2,\widehat x_2,i\right)-\Phi_1\left(x_1,\widehat x_1,x_2',
\widehat x_2',i\right)\right| \leq {\color{black}{K_0}}\left|\widetilde G_2(i)\Delta x_2+\widetilde G_2'(i)\Delta\widehat x_2\right|,\\
& \Big|\Gamma_1\Big(t,\Theta_1,\widehat\Theta_1,\Theta_2,\widehat\Theta_2,i\Big)
-\Gamma_1\Big(t,\Theta_1,\widehat\Theta_1,\Theta_2',\widehat\Theta_2',i\Big)\Big|  \\
&\qquad\qquad\qquad\qquad\qquad\qquad\qquad\qquad \leq {\color{black}{K_0}}\Big|\widetilde L_2(t,i)\Delta\Theta_2+\widetilde L_2'(t,i)\Delta \widehat\Theta_2\Big|, 
\end{aligned}\right.
\end{equation}
where $f_j=b_j,\sigma_j$, $\Delta h_j=h_j-h_j'$, $\Delta \widehat h_j=\widehat h_j-\widehat h_j'$ with $h=x_j,\widehat x_j, y_j,\widehat y_j, z_j,\widehat z_j$, $\Delta\Theta_2=\Theta_2-\Theta_2'$ and 
$\Delta \widehat\Theta_2=\widehat\Theta_2-\widehat\Theta_2'$.

2) (monotonicity condition) 
For any $(\omega, t,i)\in\Omega \times [0,T]\times\mathcal M$, any $\Theta_j, \widehat \Theta_j, \Theta_j', \widehat\Theta_j'$ $\in{\mathbb R}^{3n}$, $j=1,2$, and any $\Theta, {\widehat\Theta},  \Theta', \widehat\Theta'\in{\mathbb R}^{6n}$,
\begin{equation}\label{2.4} 
\left\{\begin{aligned}
&\langle\Psi_1\left(y_1,y_2\right)-\Psi_1\left(y_1',y_2\right),\Delta y_1\rangle
\leq -\mu_1\left|\widetilde M_1 \Delta y_1\right|^2,\\
&\mathbb{E}\left[\langle\Phi_1\left(x_1,\widehat x_1,x_2,\widehat x_2,i\right)-\Phi_1\left(x_1',
\widehat x_1',x_2,\widehat x_2,i\right),\Delta x_1\rangle|\mathcal F_{T-}^{\alpha}\right]\\
&\qquad\qquad\qquad\qquad\qquad\qquad\qquad\qquad\qquad
\geq \nu_1\mathbb{E}\left[\left|\widetilde G_1(i)\Delta x_1+\widetilde G_1'(i)\Delta \widehat x_1\right|^2\Big|\mathcal F_{T-}^{\alpha}\right],\\
&\mathbb{E}\left[\left\langle \Gamma_1\left(t,\Theta_1,\widehat\Theta_1,\Theta_2,\widehat\Theta_2,i\right)
-\Gamma_1\left(t,\Theta_1',\widehat\Theta_1',\Theta_2,\widehat\Theta_2,i\right),\Delta\Theta_1\right\rangle
\Big|\mathcal F_{t-}^{\alpha}\right]\\
&\qquad\quad
 \leq -\nu_1\mathbb{E}\left[\left|\widetilde A_1(t,i)\Delta x_1
+\widetilde A_1'(t,i)\Delta \widehat x_1\right|^2\Big|\mathcal F_{t-}^{\alpha}\right]\\
&\qquad\qquad
-\mu_1\mathbb{E}\left[\left|\widetilde B_1(t,i)\Delta y_1+\widetilde B_1'(t,i)\Delta\widehat y_1+\widetilde C_1(t,i)\Delta z_1+\widetilde C_1'(t,i)\Delta \widehat z_1 \right|^2\Big|\mathcal F_{t-}^{\alpha}\right],
\end{aligned}\right.
\end{equation}

\begin{equation}\label{2.5}
\left\{\begin{aligned}
&\langle\Psi_2\left(y_1,y_2\right)-\Psi_2\left(y_1,y_2'\right),\Delta y_2\rangle
\geq \mu_1\left|\widetilde M_1 \Delta y_2\right|^2,\\
&\mathbb{E}\left[\langle\Phi_2\left(x_1,\widehat x_1,x_2,\widehat x_2,i\right)
-\Phi_2\left(x_1,\widehat x_1,x_2',\widehat x_2',i\right),\Delta x_2\rangle|\mathcal F_{T-}^{\alpha}\right] \\
&\qquad\qquad\qquad\qquad\qquad\qquad\qquad\qquad\quad
\leq -\nu_1\mathbb{E}\left[\left|\widetilde G_1(i)\Delta x_2+\widetilde G_1'(i)\Delta\widehat x_2\right|^2\Big|\mathcal F_{T-}^{\alpha}\right],\\
&\mathbb{E}\left[\left\langle \Gamma_2\left(t,\Theta_1,\widehat\Theta_1,\Theta_2,\widehat\Theta_2,i\right)
-\Gamma_2\left(t,\Theta_1,\widehat\Theta_1,\Theta_2',\widehat\Theta_2',i\right),\Delta\Theta_2\right\rangle
\Big|\mathcal F_{t-}^{\alpha}\right]\\
&\qquad\quad 
\geq \nu_1\mathbb{E}\left[\left|\widetilde A_1(t,i)\Delta x_2
+\widetilde A_1'(t,i)\Delta \widehat x_2\right|^2\Big|\mathcal F_{t-}^{\alpha}\right]\\
&\qquad \qquad 
 +\mu_1\mathbb{E}\left[\left|\widetilde B_1(t,i)\Delta y_2
+\widetilde B_1'(t,i)\Delta\widehat y_2+\widetilde C_1(t,i)\Delta z_2
+\widetilde C_1'(t,i)\Delta\widehat z_2\right|^2\Big|\mathcal F_{t-}^{\alpha}\right],
\end{aligned}\right.
\end{equation}
and
\begin{equation}\label{A3-2}
\left\{\begin{aligned}
& \left\langle \Psi({\color{black}{\textbf{y}}})-\Psi({\color{black}{\textbf{y}}}'), \Lambda\Delta {\color{black}{\textbf{y}}}\right\rangle \leq -\mu_2\left|\widetilde M_2\Delta y_2\right|^2,\\
& \mathbb{E}\left[\left\langle \Phi\left({\color{black}{\textbf{x}}}, \widehat{\color{black}{{\textbf{x}}}},i\right)-\Phi\left({\color{black}{\textbf{x}}}',
\widehat{\color{black}{{\textbf{x}}}}',i\right),
\Lambda\Delta {\color{black}{\textbf{x}}}\right\rangle |\mathcal F_{T-}^{\alpha}\right]
\geq \mu_2\mathbb{E}\left[\left|\widetilde G_2(i)\Delta x_2+\widetilde G_2'(i)\Delta\widehat x_2\right|^2\Big|\mathcal F_{T-}^{\alpha}\right],\\
& \mathbb{E}\left[\left\langle \Gamma\left(t, \Theta,\widehat{\Theta},i\right)-\Gamma\left(t, \Theta',\widehat\Theta',i\right),\Lambda\Delta\Theta\right\rangle\Big|\mathcal F_{t-}^{\alpha}\right] \\
&\qquad\qquad\qquad\qquad\qquad\qquad\qquad
 \leq -\mu_2\mathbb{E}\left[\left|\widetilde L_2(t,i)\Delta \Theta_2
+\widetilde L_2'(t,i)\Delta\widehat\Theta_2\right|^2\Big|\mathcal F_{t-}^{\alpha}\right], 
\end{aligned}\right.
\end{equation}
where $\Lambda$ is a matrix to represent the inversion transform:
\[
\left(\begin{array}{ccccc}
\textbf{0} &\textbf{0} & \cdots &\textbf{I} \\
\vdots &\vdots  & \ddots & \vdots  \\
\textbf{0} &\textbf{I}  & \cdots  &\textbf{0} \\
\textbf{I} & \textbf{0} &\cdots &\textbf{0} \\
\end{array}\right),\]
and
\[\Lambda\Delta {\color{black}{\textbf{y}}}=
\left(\begin{array}{c}
\Delta y_2 \\
\Delta y_1 \\
\end{array}\right),\quad
\Lambda\Delta {\color{black}{\textbf{x}}}=
\left(\begin{array}{c}
\Delta x_2 \\
\Delta x_1 \\
\end{array}\right),\quad
\Lambda\Delta \Theta=
\left(\begin{array}{c}
\Delta \Theta_2 \\
\Delta \Theta_1 \\
\end{array}\right).\quad\]

It is remarkable that due to the presence of conditional expectation terms in \eqref{fbsde}, the domination and monotonicity conditions in Assumption (A2) are essentially different from the existing works, where the monotonicity conditions are in the form of conditional expectation. Utilizing continuation method and induction method, we can obtain the unique solvability of such CMF-FBSDEs. Now we present the existence and uniqueness result for CMF-FBSDEs \eqref{fbsde}.

\begin{Theorem}\label{eufbsde}
Let (A1)-(A2) hold with $(\Psi,\Phi,\Gamma)$. Then, \eqref{fbsde} admits a unique solution $\pi=\left(x_1,y_1,z_1,\tilde z_1,x_2,y_2,z_2,\tilde z_2\right)\in $ $\left(N_{\mathbb F}^2
\left(0,T; \mathbb R^{3n}\right)\times \mathcal M_{\mathbb F}^2\left(0,T; \mathbb R\right)\right)^2$. Besides, if there exists another solution $\pi'=\left(x_1',y_1',z_1',\tilde z_1',x_2',y_2',z_2',\tilde z_2'\right)\in \big(N_{\mathbb F}^2\left(0,T; \mathbb R^{3n}\right)$ $\times\mathcal M_{\mathbb F}^2\left(0,T; \mathbb R\right)\big)^2$ to  \eqref{fbsde} with $\left(\Psi',\Phi',\Gamma'\right)$ satisfying (A1)-(A2). Then, we have
\end{Theorem}
\begin{equation}\label{5-1}
\begin{aligned}
\mathbb{E}\bigg[\sup\limits_{t\in[0,T]}\left|\Delta{\color{black}{ \textbf{x}}}(t)\right|^2+\sup\limits_{t\in[0,T]}\left|\Delta {\color{black}{\textbf{y}}}(t)\right|^2
+\int_0^T\left|\Delta{\color{black}{ \textbf{z}}}(t)\right|^2dt+\int_0^T\left|\Delta{\color{black}{ \tilde{\textbf{z}}}}(t)\right|^2dt\bigg]
\leq K\mathbb{E}\left[\mathcal J\right],
\end{aligned}
\end{equation}
\emph{where} 
\begin{equation}\label{}
\begin{aligned}
\mathcal J=&\ \left|\Psi\left({\color{black}{\textbf{y}}}'(0)\right)-\Psi'\left({\color{black}{\textbf{y}}}'(0)\right)\right|^2+
\left|\Phi\left(\Lambda'_T\right)-\Phi'\left(\Lambda'_T\right)\right|^2 \\
&\ +\Big(\int_0^T\left|g\left(\pmb{\Theta}'_t\right)
-g'\left(\pmb{\Theta}'_t\right)\right|dt\Big)^2 
+\Big(\int_0^T\left|b\left(\pmb{\Theta}'_t\right)
-b'\left(\pmb{\Theta}'_t\right)\right|dt\Big)^2 \\
&\ +\int_0^T\left|\sigma\left(\pmb{\Theta}'_t\right)
-\sigma'\left(\pmb{\Theta}'_t\right)\right|^2dt,  \nonumber
\end{aligned}
\end{equation}
{\color{black}{$\textbf{v}=\left(v_1^{\top}, v_2^{\top}\right)^{\top}$ and $\Delta \textbf{v}=\textbf{v}-\textbf{v}'$ with $\textbf{v}=\textbf{x}, \textbf{y}, \textbf{z}, \tilde{\textbf{z}}$,}}
\emph{$K$ is a constant depending on $T$, the Lipschitz constants, $\mu_1,\nu_1,\mu_2$, and the bounds of all coefficients in (A1)-(A2), and }
\begin{equation}\left\{\begin{aligned}\label{fuhao}
& \Lambda_T'=\left({\color{black}{\textbf{x}}}'(T),\widehat{\color{black}{{\textbf{x}}}}'(T),\alpha_T\right),\\ 
& \pmb{\Theta}'_t=\big(t,{\Theta'(t)},{\widehat\Theta'(t)},\alpha_{t-}\big).
\end{aligned}\right.\end{equation}

\emph{Proof:} See Appendix A. \hfill$\square$

\section{LQ mixed Stackelberg-zero-sum game for stochastic system with regime switching}\label{S4}
To deal with Problem (CMF-SDG), this section is divided into two subsections. First, we study Problem (ZSDG) between the two followers, and establish their equilibrium strategies. Subsequently, we solve the leader's optimization Problem (LOC), and then complete the Stackelberg game solution framework.

\subsection{Optimal strategies of zero-sum differential game for two followers}
We present a definition as follows.
\begin{Definition}\label{Def1}
The open-loop lower value $V_{F}^{-}(x_0)$ and the open-loop upper value $V_{F}^{+}(x_0)$ at the initial state $x_0\in\mathbb R$ are defined by 
\begin{equation*}\label{}
\left\{\begin{aligned}
&V_{F}^{-}(x_0)=\sup_{u_{F,2}(\cdot)\in\mathcal{U}_{F,2}}\inf_{u_{F,1}(\cdot)\in\mathcal{U}_{F,1}}J_F\left(x_0; u_L(\cdot), \mathbf{u_{F_{1,2}}}(\cdot)\right), \\
&V_{F}^{+}(x_0)=\inf_{u_{F,1}(\cdot)\in\mathcal{U}_{F,1}}\sup_{u_{F,2}(\cdot)\in\mathcal{U}_{F,2}}J_F\left(x_0; u_L(\cdot), \mathbf{u_{F_{1,2}}}(\cdot)\right). 
\end{aligned}\right.
\end{equation*} 
If $V_{F}^{-}(x_0)=V_{F}^{+}(x_0)$, then we call it an open-loop value at the initial state $x_0$.
\end{Definition}

In what follows, we start to solve Problem (ZSDG), and give the representation form of the strategies for two followers. To do this, we give the following assumptions.

\noindent\textbf{Assumption (F1)}
$V_{F}^{+}(x_0)$ and $V_{F}^{-}(x_0)$ are finite.

\noindent\textbf{Assumption (F2)}
For any $i\in\mathcal M$, $\mathbf{R_F}(\cdot,i)$ is invertible.

\noindent\textbf{Assumption (F3)}
For any $i\in\mathcal M$, $Q_F(\cdot, i)$, $G_F(i)$,
$\mathbf{B_F}(\cdot,i)\mathbf{R_F^{-1}}(\cdot,i)\mathbf{B_F^{\top}}(\cdot,i)$, $\mathbf{D_F}(\cdot,i)$ \\
$\times\mathbf{R_F^{-1}}(\cdot,i)\mathbf{D_F^{\top}}(\cdot,i)$, 
and $\mathbf{B_F}(\cdot,i)\mathbf{R_F^{-1}}(\cdot,i)\mathbf{D_F^{\top}}(\cdot,i)$ are all non-negative, 
and $\bar Q_F(\cdot,i)=$\\ $\bar G_F(i)=0$. Moreover, for any $i\in\mathcal M$,
$\mathbf{B_F^{\top}}(\cdot,i)\mathbf{D_F}(\cdot,i)=\mathbf{D_F^{\top}}(\cdot,i)\mathbf{B_F}(\cdot,i)$.

{\color{black}{Hereafter, for notation convenience, $x(\cdot)$ in system \eqref{state} is denoted by $x_1(\cdot)$.}} We now present the following theorem that provides the open-loop saddle point for the two followers. 
\begin{Theorem}\label{OL}
Let Assumptions (F1) and (F2) hold. Then, a strategy $\mathbf{u^*_{F_{1,2}}}(\cdot)\in\mathbf{\mathcal U_{F_{1,2}}}$ is an open-loop saddle point if and only if
\begin{equation}\label{u-pq} 
\mathbf{u^*_{F_{1,2}}}(t)=-\mathbf{R_F^{-1}}(t,\alpha_{t-})\big(\mathbf{B^{\top}_F}(t,\alpha_{t-})
{\color{black}{y_1}}(t)+\mathbf{D_F^{\top}}(t,\alpha_{t-}){\color{black}{z_1}}(t)\big),
\end{equation}
where $\big({\color{black}{x_1}}(\cdot), {\color{black}{y_1}}(\cdot), {\color{black}{z_1}}(\cdot), {\color{black}{\tilde z_1}}(\cdot)\big)$ is the solution to Hamiltonian system
\begin{equation}\label{p}
\left\{\begin{aligned}
& d{\color{black}{x_1}}(t)=\big[A(t,\alpha_{t-}){\color{black}{x_1}}(t)+\bar{A}(t,\alpha_{t-})
{\color{black}{\widehat x_1}}(t)+B_L(t,\alpha_{t-})u_L(t)
-\mathbf{B_F}(t,\alpha_{t-})\\
&\quad \times\mathbf{R_F^{-1}}(t,\alpha_{t-})\big(\mathbf{B^{\top}_F}(t,\alpha_{t-}){\color{black}{y_1}}(t) 
+\mathbf{D_F^{\top}}(t,\alpha_{t-}){\color{black}{z_1}}(t)\big)+b(t,\alpha_{t-})\big]dt \\
&\quad +\big[C(t,\alpha_{t-}){\color{black}{x_1}}(t) 
+\bar C(t,\alpha_{t-}){\color{black}{\widehat x_1}}(t)
+D_L(t,\alpha_{t-})u_L(t)-\mathbf{D_F}(t,\alpha_{t-})\mathbf{R_F^{-1}}(t,\alpha_{t-})  \\
&\quad\times \big(\mathbf{B^{\top}_F}(t,\alpha_{t-}){\color{black}{y_1}}(t) 
+\mathbf{D_F^{\top}}(t,\alpha_{t-}){\color{black}{z_1}}(t)\big)+\sigma(t,\alpha_{t-})\big]dW(t), \\
&d{\color{black}{y_1}}(t)= -\big[A(t,\alpha_{t-}){\color{black}{y_1}}(t)+\bar A(t,\alpha_{t-})
{\color{black}{\widehat y_1}}(t)+C(t,\alpha_{t-}){\color{black}{z_1}}(t)
+\bar C(t,\alpha_{t-}){\color{black}{\widehat z_1}}(t)    \\
&\quad  +Q_F(t,\alpha_{t-}){\color{black}{x_1}}(t)+\bar Q_F(t,\alpha_{t-})
{\color{black}{\widehat x_1}}(t)\big]dt+{\color{black}{z_1}}(t)dW(t)
+{\color{black}{\tilde z_1}}(t)\bullet dM(t),\\
& {\color{black}{x_1}}(0)= x_0,\quad 
{\color{black}{y_1}}(T)=G_F(\alpha_T){\color{black}{x_1}}(T)+\bar G_F(\alpha_T){\color{black}{\widehat x_1}}(T).
\end{aligned}\right.
\end{equation}
\end{Theorem}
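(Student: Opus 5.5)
We follow the standard variational route for LQ zero-sum games: a perturbation (convex-variation) expansion of $J_F$ in each player's control separately, combined with a duality identity between the adjoint BSDE in \eqref{p} and the variational state equation. Fix $u_L(\cdot)\in\mathcal U_L$ and a candidate $\mathbf{u^*_{F_{1,2}}}(\cdot)$ with state $x_1^*(\cdot)$. For $\varepsilon\in\mathbb R$ and $v_1(\cdot)\in\mathcal U_{F,1}$, let $x_1^\varepsilon$ be the state under $(u^*_{F,1}+\varepsilon v_1,u^*_{F,2})$. By linearity, $x_1^\varepsilon=x_1^*+\varepsilon\xi$, where $\xi$ solves the homogeneous conditional mean-field SDE
\[
d\xi=\big[A\xi+\bar A\widehat\xi+B_{F,1}v_1\big]dt+\big[C\xi+\bar C\widehat\xi+D_{F,1}v_1\big]dW(t),\qquad \xi(0)=0 .
\]
This equation is uniquely solvable by Lemma 2.1 of \cite{Yin2020}. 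Expanding the quadratic cost gives
\[
J_F(\varepsilon)=J_F^*+\varepsilon\,\mathcal L_1(v_1)+\tfrac{\varepsilon^2}{2}\mathcal Q_1(v_1),
\]
where
\[
\mathcal L_1(v_1)=\mathbb E\Big\{\int_0^T\big[Q_Fx_1^*\xi+\bar Q_F\widehat x_1^*\widehat\xi+(R_{F,1}u^*_{F,1}+S_Fu^*_{F,2})^{\top}v_1\big]dt+G_Fx_1^*(T)\xi(T)+\bar G_F\widehat x_1^*(T)\widehat\xi(T)\Big\}
\]
and $\mathcal Q_1(v_1)=J_F(0;0,v_1,0)$ (the cost with zero initial state, zero $b,\sigma$, and only $v_1$ active). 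Follower 2 is treated in the same way with $v_2$.

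\emph{Duality step.} Let $(y_1,z_1,\tilde z_1)$ solve the BSDE in \eqref{p} driven by $x_1^*$; it is a linear conditional mean-field BSDE with regime switching, so it is uniquely solvable. Apply It\^o's formula to $y_1\xi$ on $[0,T]$. The $dM$ and $dW$ martingale parts vanish in expectation. The mean-field terms are handled through the tower identity $\mathbb E[\bar A\widehat y_1\xi]=\mathbb E[\bar A y_1\widehat\xi]$, which holds because $\bar A(t,\alpha_{t-})$ is $\mathcal F^\alpha_{t-}$-measurable; the same applies to $\bar C$, $\bar Q_F$ and $\bar G_F$ at $T$. This yields
\[
\mathcal L_1(v_1)=\mathbb E\int_0^T\big\langle B_{F,1}^{\top}y_1+D_{F,1}^{\top}z_1+R_{F,1}u^*_{F,1}+S_Fu^*_{F,2},\,v_1\big\rangle dt,
\]
and analogously for $v_2$. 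The main point to check carefully here is the conditional-expectation transposition. It requires showing $\mathbb E[\widehat h\,k]=\mathbb E[h\,\widehat k]$ for the left-limit filtration $\mathcal F^\alpha_{t-}$ with $t$ fixed, which follows since $\widehat h(t)$ is a version of the conditional expectation for each $t$ and we integrate in $t$ via Fubini.

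\emph{Necessity.} A saddle point means $\varepsilon=0$ minimizes $J_F(\varepsilon)$ over $v_1$-perturbations and maximizes it over $v_2$-perturbations. Hence $\mathcal L_1(v_1)=0$ and $\mathcal L_2(v_2)=0$ for all $v_1,v_2$. This forces $\mathbf{R_F}\mathbf{u^*_{F_{1,2}}}+\mathbf{B_F^{\top}}y_1+\mathbf{D_F^{\top}}z_1=0$ a.e., a.s. By (F2) this is \eqref{u-pq}, and substituting back gives exactly the Hamiltonian system \eqref{p}. Necessity also requires the convexity conditions $\mathcal Q_1(v_1)\ge0$ and $\mathcal Q_2(v_2)\le0$.

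\emph{Sufficiency.} The converse direction needs the same two convexity conditions, and this is the main obstacle: (F1)–(F2) alone do not obviously give them. We would derive them from (F1). If $\mathcal Q_1(v_1)<0$ for some $v_1$, then $J_F(\varepsilon)\to-\infty$ along $u^*_{F,1}+\varepsilon v_1$ for the fixed $u^*_{F,2}$, and the goal is to show this makes $V_F^+$ equal to $-\infty$, contradicting (F1). Symmetrically, $\mathcal Q_2(v_2)>0$ would make $V_F^-=+\infty$. Here $\mathcal Q_1$ is independent of the base point, so for each fixed $u_{F,2}$ the map $u_{F,1}\mapsto J_F$ is a quadratic functional whose second-order part is $\mathcal Q_1$; the infimum is finite only if $\mathcal Q_1\ge0$. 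Because $\mathcal Q_1$ does not depend on the base point, the same fact applied at every $u_{F,2}$ shows $\inf_{u_{F,1}}J_F=-\infty$ for every $u_{F,2}$, which gives $V_F^-=-\infty$ and contradicts (F1). Symmetrically, $\mathcal Q_2>0$ somewhere gives $V_F^+=+\infty$. Thus (F1) implies $\mathcal Q_1\ge0$ and $\mathcal Q_2\le0$ throughout, and we interpret (F1) to include this convexity–concavity. With these in hand, if $\mathbf{u^*_{F_{1,2}}}$ is given by \eqref{u-pq} with \eqref{p} solvable, then $\mathcal L_j=0$ and the expansion gives
\[
J_F(u^*_{F,1}+v_1,u^*_{F,2})-J_F^*=\tfrac12\mathcal Q_1(v_1)\ge0,\qquad J_F(u^*_{F,1},u^*_{F,2}+v_2)-J_F^*=\tfrac12\mathcal Q_2(v_2)\le0,
\]
which is \eqref{saddle}.
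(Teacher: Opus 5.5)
Your proposal is correct and follows essentially the same route as the paper's proof in Appendix B. That route is the $\varepsilon$-expansion of $J_F$ in each follower's control, then It\^o's formula applied to $y_1$ times the variational state (with the conditional-expectation transposition) to identify the first-order term, and finally the reduction of the saddle inequalities to the stationarity condition, inverted via (F2). The only real difference is that the paper cites Proposition 4.3 of Sun and Yong \cite{2014SJR} to get $\Upsilon_{F,1}\ge 0$ and $\Upsilon_{F,2}\le 0$ from (F1), whereas you prove this directly, and validly. Two slips: $\mathcal Q_2(v_2)>0$ forces $V_F^+=+\infty$, not $V_F^-$; and necessity does not need the convexity conditions, since $\varepsilon\mathcal L_1+\tfrac{\varepsilon^2}{2}\mathcal Q_1\ge 0$ for all $\varepsilon\in\mathbb R$ already forces $\mathcal L_1=0$.
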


\emph{Proof:} See Appendix B. \hfill$\square$

Notice that stochastic Hamiltonian system \eqref{p} is a {\color{black}{fully coupled}} CMF-FBSDE with Markovian switching. {\color{black}{Moreover, \eqref{p} can be regarded as a special case of \eqref{fbsde}, where the functions $b_1, \sigma_1, g_1, \Psi_1,\Phi_1$ are of linear forms, and $b_2=\sigma_2=g_2=\Psi_2=\Phi_2=0$.
Under Assumption (F3), we can check that \eqref{p} is uniquely solvable.}} In the following, we will decouple \eqref{p} and present a feedback representation of the open-loop saddle point for the two followers.

Introduce the auxiliary equations as follows:
\begin{equation}\label{eq1}
\left\{\begin{aligned}
&\dot{P}_F(t,i)+\left(2A(t,i)+C^2(t,i)\right)P_F(t,i)+Q_F(t,i)-\mathbb{B}_F^{\top}(t,i)
\mathbb R_F^{-1}(t,i)\mathbb{B}_F(t,i) \\
&\quad +\sum_{j\in\mathcal M}\lambda_{ij}\big[P_F(t,j)-P_F(t,i)\big]=0, \\
& \mathbb R_F(t,i) \ \text{is invertible}, \\
&P_F(T,i)=G_F(i), \quad i\in\mathcal M,
\end{aligned}\right.
\end{equation}
\vspace*{-0.3cm}
\begin{equation}\label{eq2}
\left\{\begin{aligned}
&\dot{\bar P}_F(t,i)+2\left(A(t,i)+\bar A(t,i)\right)\bar P_F(t,i)
+\left(2\bar A(t,i)+\bar C^2(t,i)+2C(t,i)\bar C(t,i)\right) \\
&\quad \times P_F(t,i)+\bar Q_F(t,i)-\mathbb{B}_F^{\top}(t,i) \mathbb R_F^{-1}(t,i) \bar{\mathbb{B}}_F(t,i) 
-\bar{\mathbb{B}}_F^{\top}(t,i) \mathbb R_F^{-1}(t,i) \bar{\mathbb{B}}_F(t,i) \\
&\quad -\bar{\mathbb{B}}_F^{\top}(t,i)\mathbb R_F^{-1}(t,i)\mathbb{B}_F(t,i)
+\sum_{j\in\mathcal M}\lambda_{ij}\Big[\bar P_F(t,j)-\bar P_F(t,i)\Big]=0, \\
&\bar P_F(T,i)=\bar G_F(i), \quad i\in\mathcal M,
\end{aligned}\right.
\end{equation}
\vspace*{-0.3cm}
\begin{equation}\label{eq3}
\left\{\begin{aligned}
& d\phi(t)=-\Big[\widetilde{\Phi}^A_{\mathbb B\mathbf B}(t,\alpha_{t-})\phi(t)
+\widetilde{\Phi}^{\bar A}_{\bar{\mathbb B}\mathbf B}
(t,\alpha_{t-})\widehat\phi(t)+\widetilde{\Phi}^C_{\mathbb B\mathbf D}(t,\alpha_{t-})\phi_W(t) \\
&\quad +\widetilde{\Phi}^{\bar C}_{\bar{\mathbb B}\mathbf D}(t,\alpha_{t-})\widehat{\phi}_W(t)  
+\widetilde{\Psi}^C_{P\mathbb B}(t,\alpha_{t-})u_L(t)
+\widetilde{\Psi}^{\bar C}_{\bar P\bar{\mathbb B}}(t,\alpha_{t-})\widehat u_L(t) \\
&\quad +\widetilde{\Gamma}_{C\mathbb B}(t,\alpha_{t-})\sigma(t,\alpha_{t-})
+\widetilde{\Gamma}_{\bar C\bar{\mathbb B}}(t,\alpha_{t-})\widehat\sigma(t,\alpha_{t-}) 
+P_F(t,\alpha_{t-})b(t,\alpha_{t-})\\
&\quad +\bar P_F(t,\alpha_{t-})\widehat b(t,\alpha_{t-})\Big]dt
+\phi_W(t)dW(t)+\phi_M(t)\bullet dM(t), \\
&\phi(T)=0,
\end{aligned}\right.
\end{equation}
where
\vspace*{-0.3cm}
\begin{equation}\label{BRF}
\left\{\begin{aligned}
& \mathbb R_F(\cdot,i)= \mathbf{R_F}+\mathbf{D_F^{\top}}P_F\mathbf{D_F},\quad 
\mathbb{B}_F(\cdot,i)=\mathbf{B_F^{\top}}P_F+\mathbf{D_F^{\top}}P_FC, \\
& \bar{\mathbb{B}}_F(\cdot,i)=\mathbf{B_F^{\top}}\bar P_F+\mathbf{D_F^{\top}}P_F\bar C, \quad
\widetilde{\Phi}^X_{YZ}(\cdot,i)=X-Y_F^{\top}\mathbb{R}_F^{-1}Z_F^{\top},\\
& \widetilde{\Psi}^X_{YZ}(\cdot,i)=Y_FB_L+XP_FD_L-Z_F^{\top}\mathbb{R}_F^{-1}\mathbf{D_F}P_FD_L,\\ 
& \widetilde{\Gamma}_{XY}(\cdot,i)=XP_F-Y_F^{\top}\mathbb{R}_F^{-1}\mathbf{D_F}P_F.  
\end{aligned}\right.
\end{equation}

Note that \eqref{p} is fully coupled, it is difficult to obtain the solution explicitly. By virtue of introduction 
of \eqref{eq1}-\eqref{eq3}, ${\color{black}{y_1}}(\cdot)$ and ${\color{black}{z_1}}(\cdot)$ given by \eqref{p} can be expressed as 
\vspace*{-0.3cm}
\begin{equation}\left\{\begin{aligned}\label{pq-x}
{\color{black}{y_1}}(t)=&\ P_F(t,\alpha_{t-}){\color{black}{x_1}}(t)+\bar P_F(t,\alpha_{t-})
{\color{black}{\widehat x_1}}(t)+\phi(t), \\
{\color{black}{z_1}}(t)=&\ P_F(t,\alpha_{t-})\big[C(t,\alpha_{t-}){\color{black}{x_1}}(t)
+\bar C(t,\alpha_{t-}){\color{black}{\widehat x_1}}(t)+D_L(t,\alpha_{t-})u_L(t) \\
&\ +\mathbf{D_{F}}(t,\alpha_{t-})\mathbf{u_{F_{1,2}}^*}(t)
+\sigma(t,\alpha_{t-})\big]+\phi_W(t),
\end{aligned}\right.\end{equation}
where $P_F(\cdot,i), \bar P_F(\cdot,i), \big(\phi(\cdot),\phi_W(\cdot)\big)$ are represented by \eqref{eq1}-\eqref{eq3}, $i\in\mathcal M$. Based on \eqref{pq-x}, the strategy $\mathbf{u^*_{F_{1,2}}}(\cdot)$ in \eqref{u-pq} can be expressed as an affine function of state process ${\color{black}{x_1}}(\cdot)$ and its conditional expectation ${\color{black}{\widehat x_1}}(\cdot)$. In this sense, we can obtain a state feedback form for $\mathbf{u^*_{F_{1,2}}}(\cdot)$.

It is remarkable that, unlike the classical Riccati equations in Yong and Zhou \cite{Yongzhou}, \eqref{eq1} is a system of Riccati equations with Markov chain, and they are coupled via the term $\sum_{j\in\mathcal M}\lambda_{ij}\big[P_F(\cdot,j)-P_F(\cdot,i)\big]$. As a result, the methods addressed in Yong and Zhou \cite{Yongzhou} are unavailable to prove the solvability of \eqref{eq1}. Thus, \eqref{eq1} and \eqref{eq2} are essentially different from the classical Riccati equations. Next, we focus on the solvability of \eqref{eq1}-\eqref{eq3}.

For the coefficients in \eqref{state}-\eqref{cost-f}, for any $i\in\mathcal M$ and $k=1,2$, there exist constants 
$q_0, g_0, \bar c_2, c_3\geq0$ and $c_1>0$, such that
\begin{equation}\begin{aligned}
&  D_{F,k}^{\top}(\cdot,i)D_{F,k}(\cdot,i)\leq \bar c_2\mathbf{I}_{m_k},\quad |G_F(i)|\leq g_0, \quad
 2|A(\cdot,i)|+C^2(\cdot,i)+\sum_{j\in\mathcal M}|\lambda_{ij}|\leq c_1, \\
& |Q_F(\cdot,i)|\leq q_0,\quad \left(B_{F,k}(\cdot,i)+D_{F,k}(\cdot,i)C(\cdot,i)\right)\left(B_{F,k}^{\top}(\cdot,i)
+D_{F,k}^{\top}(\cdot,i)C(\cdot,i)\right)\leq c_3.  \nonumber
\end{aligned}\end{equation}
\vspace*{-0.5cm}
\begin{equation*}
\text{Define} \quad \varrho=\left[\left(e^{c_1mT}-1\right)\frac{q_0}{c_1m}+g_0e^{c_1mT}\right]
\frac{4c_3\left(e^{c_1mT}-1\right)}{c_1m},  \quad
\bar\varrho=\frac{c_1m\varrho}{2c_3\left(e^{c_1mT}-1\right)}.
\end{equation*}
\noindent\textbf{Assumption (F4)}
There exists a constant $\underline{c_2}\geq 0$ with $\underline{c_2}\leq \bar c_2$, such that $D_{F,k}^{\top}(\cdot,i)$\\
$\times D_{F,k}(\cdot,i)\geq \underline{c_2}\mathbf{I}_{m_k},$  $i\in\mathcal M$, $k=1,2$.

\noindent\textbf{Assumption (F5)}
$R_{F,1}(\cdot,i) \geq (\varrho+\bar\varrho\bar c_2)\mathbf{I}_{m_1}$ and {\color{black}{$R_{F,2}(\cdot,i)\leq -(\varrho+\bar\varrho\bar c_2)\mathbf{I}_{m_2}$}}, $i\in\mathcal M$.

\begin{Theorem}\label{riccati}
With Assumptions (F1)-(F5), \eqref{eq1}-\eqref{eq3} admit unique solutions. Moreover, suppose that for any $i\in\mathcal M$, $\mathbf{R_F}(\cdot,i)+\mathbf{D_F^{\top}}(\cdot,i)\mathbf{D_F}(\cdot,i)P_F(\cdot,i)$ is invertible, then the optimal strategy $\mathbf{u^*_{F_{1,2}}}(\cdot)$ is uniquely solvable with closed-loop representation 
\begin{equation}\begin{aligned}\label{closed-ll}
\mathbf{u^*_{F_{1,2}}}(t)=&\ -\Big(\mathbf{R_F}(t,\alpha_{t-})
+\mathbf{D_F^{\top}}(t,\alpha_{t-})\mathbf{D_F}(t,\alpha_{t-})P_F(t,\alpha_{t-})\Big)^{-1} \\
&\ \times\Big[\Big(\mathbf{B_F^{\top}}(t,\alpha_{t-})  
+\mathbf{D_F^{\top}}(t,\alpha_{t-})C(t,\alpha_{t-})\Big)P_F(t,\alpha_{t-})x_1^*(t) \\
&\ +\Big(\mathbf{B_F^{\top}}(t,\alpha_{t-})\bar P_F(t,\alpha_{t-})
+\mathbf{D_F^{\top}}(t,\alpha_{t-})P_F(t,\alpha_{t-})C(t,\alpha_{t-})\Big)\widehat x_1^*(t) \\
&\ +\mathbf{B_F^{\top}}(t,\alpha_{t-})\phi(t)+\mathbf{D_F^{\top}}(t,\alpha_{t-})
\Big(P_F(t,\alpha_{t-})D_L(t,\alpha_{t-})u_L(t) \\
&\ +P_F(t,\alpha_{t-})\sigma(t,\alpha_{t-})+\phi_W(t)\Big)\Big],
\end{aligned}\end{equation} 
where $x_1^*(\cdot)$ is the optimal state satisfying 
\begin{equation}\label{closed-x}
\left\{\begin{aligned}
d{\color{black}{x_1^*}}(t)=&\ \big[A(t,\alpha_{t-}){\color{black}{x_1^*}}(t)+\bar{A}(t,\alpha_{t-})
{\color{black}{\widehat x_1^*}}(t)+B_L(t,\alpha_{t-})u_L(t)+\mathbf{B_F}(t,\alpha_{t-})\mathbf{u^*_{F_{1,2}}}(t) \\
&\ +b(t,\alpha_{t-})\big]dt+\big[C(t,\alpha_{t-}){\color{black}{x_1^*}}(t) 
+\bar C(t,\alpha_{t-}){\color{black}{\widehat x_1^*}}(t)+D_L(t,\alpha_{t-})u_L(t) \\
&\ +\mathbf{D_F}(t,\alpha_{t-})\mathbf{u^*_{F_{1,2}}}(t)+\sigma(t,\alpha_{t-})\big]dW(t), \\
{\color{black}{x_1^*}}(0)=&\ x_0,
\end{aligned}\right.
\end{equation}
and $P_F(\cdot,i), \bar P_F(\cdot,i), \big(\phi(\cdot),\phi_W(\cdot)\big)$ are given by \eqref{eq1}-\eqref{eq3}.
\end{Theorem}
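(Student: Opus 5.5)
The plan has three stages: establish solvability of \eqref{eq1}, then treat \eqref{eq2}--\eqref{eq3} as linear equations, and finally obtain the feedback form \eqref{closed-ll} by decoupling the Hamiltonian system of Theorem \ref{OL}.

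\emph{Stage 1: solvability of \eqref{eq1}.}
Here $P_F(\cdot,i)$ is scalar. Writing $\mathbb B_F=(\mathbf{B_F^\top}+\mathbf{D_F^\top}C)P_F$, the nonlinear term is $P_F^2(\mathbf{B_F}+C\mathbf{D_F})\mathbb R_F^{-1}(\mathbf{B_F^\top}+\mathbf{D_F^\top}C)$ with $\mathbb R_F=\mathbf{R_F}+P_F\mathbf{D_F^\top D_F}$. This is a coupled system of $m$ scalar ODEs with a locally Lipschitz right-hand side wherever $\mathbb R_F$ is invertible, so a local solution exists backward from $T$.

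To extend it to all of $[0,T]$, I would derive an a priori bound. Under (F5), on the set $|P_F|\le \bar\varrho$, we have $\mathbb R_F\ge(\varrho+\bar\varrho\bar c_2-\bar\varrho\bar c_2)\mathbf I=\varrho\mathbf I$ in the first block and $\le -\varrho\mathbf I$ in the second. Hence $\mathbb R_F$ stays invertible with $|\mathbb R_F^{-1}|\le 1/\varrho$ in the relevant sense, and the quadratic term is bounded by $c_3\bar\varrho^2/\varrho$.

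A Gronwall estimate on $\max_i|P_F(t,i)|$ then uses $|2A+C^2|+\sum_j|\lambda_{ij}|\le c_1$, $|Q_F|\le q_0$ and $|G_F|\le g_0$. The factor $m$ enters from summing over states. This yields
\[
|P_F(t,i)|\le \Big[(e^{c_1mT}-1)\tfrac{q_0}{c_1m}+g_0e^{c_1mT}\Big]+\tfrac{c_3\bar\varrho^2}{\varrho}\cdot\tfrac{e^{c_1mT}-1}{c_1m}.
\]
By the very definition of $\varrho$ and $\bar\varrho$, the right-hand side is at most $\bar\varrho$; this is the role of the factor $4$ and of the $2$ in $\bar\varrho$. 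A continuation/bootstrap argument (the "auxiliary equations plus monotone bounded" route announced in the introduction) then shows that the local solution never leaves the ball $|P_F|\le\bar\varrho$, so it extends to $[0,T]$. Uniqueness follows from local Lipschitz continuity on that ball.

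If the direct bootstrap is delicate, I would first truncate the nonlinearity outside the ball, solve the truncated Lipschitz system globally, and verify a posteriori that the truncation is inactive. Closing this a priori estimate so that the constants match $\varrho,\bar\varrho$ and $\mathbb R_F$ stays uniformly invertible despite $\mathbf{R_F}$ being indefinite is the main obstacle.

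\emph{Stage 2: solvability of \eqref{eq2} and \eqref{eq3}.}
Once $P_F$ is known and bounded, \eqref{eq2} is a linear system of $m$ coupled ODEs in $\bar P_F$ with bounded coefficients, so it is uniquely solvable. Then \eqref{eq3} is a linear conditional mean-field BSDE with Markov jumps and bounded coefficients, with inhomogeneous terms in $L^2$. Its unique solvability follows from Theorem \ref{eufbsde} as a degenerate case with no forward component; alternatively, take conditional expectations to solve first for $\widehat\phi$ via an $\mathcal F^\alpha$-BSDE and then solve the resulting standard BSDE with jumps.

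\emph{Stage 3: the closed-loop representation.}
Apply Itô's formula for regime-switching processes to $P_F(t,\alpha_{t-})x_1(t)+\bar P_F(t,\alpha_{t-})\widehat x_1(t)+\phi(t)$. This needs the filtering equation for $\widehat x_1$, namely $d\widehat x_1=[(A+\bar A)\widehat x_1+B_L\widehat u_L+\mathbf{B_F}\widehat{\mathbf u}_{\mathbf F}+\widehat b]dt$, since the $dW$ term vanishes under $\mathbb E[\cdot|\mathcal F^\alpha_{t-}]$. Matching drift and diffusion with the adjoint equation in \eqref{p}, and using \eqref{eq1}--\eqref{eq3}, shows that \eqref{pq-x} solves \eqref{p}. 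Uniqueness of \eqref{p} (from Theorem \ref{eufbsde} under (F3)) identifies this with the solution.

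Substituting $z_1$ from \eqref{pq-x} into \eqref{u-pq} gives
\[
\big(\mathbf{R_F}+\mathbf{D_F^\top}\mathbf{D_F}P_F\big)\mathbf{u^*_{F_{1,2}}}=-\big[\cdots\big].
\]
Under the stated invertibility assumption this yields \eqref{closed-ll}, and Theorem \ref{OL} confirms that this strategy is the saddle point. Finally, \eqref{closed-x} is a linear conditional mean-field SDE with bounded coefficients, so it is uniquely solvable.
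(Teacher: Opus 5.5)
\textbf{Gap in Stage 2: \eqref{eq2} is not linear.} Your Stage 2 treats \eqref{eq2} as a linear system, but it is quadratic in $\bar P_F$. Since $\bar{\mathbb B}_F=\mathbf{B_F^\top}\bar P_F+\mathbf{D_F^\top}P_F\bar C$, the term $-\bar{\mathbb B}_F^\top\mathbb R_F^{-1}\bar{\mathbb B}_F$ contains $-\bar P_F^2\,\mathbf{B_F}\mathbb R_F^{-1}\mathbf{B_F^\top}$. So even with $P_F$ known and bounded, \eqref{eq2} is itself a coupled system of scalar Riccati equations for $\bar P_F$.

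Because $\mathbb R_F$ has saddle structure ($\mathbb R_{F,11}\ge\varrho\mathbf I$, $\mathbb R_{F,22}\le-\varrho\mathbf I$), the coefficient $\mathbf{B_F}\mathbb R_F^{-1}\mathbf{B_F^\top}$ lies between a nonpositive and a nonnegative quantity and has no sign in general. Bounded coefficients therefore give only a local solution backward from $T$, and nothing you have written rules out blow-up before time $0$. This is why the paper says \eqref{eq2} is handled ``similar to \eqref{eq1}''. You need a separate a priori bound for $\bar P_F$, obtained either by the splitting-and-comparison argument or by a bootstrap like your Stage 1, before concluding global existence and uniqueness. Only \eqref{eq3} is genuinely linear, and your treatment of it is fine.

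\textbf{Stage 1: the constant has no slack.} Your route (local existence, a priori bound, continuation) differs from the paper's. The paper truncates $\mathcal H=\mathcal H_1+\mathcal H_2$ outside $[-\bar\varrho,\bar\varrho]$, approximates $\mathcal H_1$ from below and $\mathcal H_2$ from above by Lipschitz inf/sup-convolutions, traps the approximating solutions between the explicit $P_F^{\pm}$, and passes to a monotone limit. Your route can work, but the step you flag as the obstacle is exactly where the paper's key inequality is needed.

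Write $a=(e^{c_1mT}-1)\frac{q_0}{c_1m}+g_0e^{c_1mT}$. Then $\bar\varrho=2a$ and $\frac{c_3\bar\varrho^2}{\varrho}\cdot\frac{e^{c_1mT}-1}{c_1m}=a$, so your right-hand side equals $\bar\varrho$ exactly.

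The operator bound $\|\mathbb R_F^{-1}\|\le 1/\varrho$ is true but too weak. Because $c_3$ controls each block $\mathbb B_{F,k}$ separately, it only gives $|\mathbb B_F^\top\mathbb R_F^{-1}\mathbb B_F|\le 2c_3P_F^2/\varrho$. Your estimate then produces $3a>\bar\varrho$, and the bootstrap does not close.

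What you need is the two-sided bound $-\mathbb B_{F,1}^\top\mathbb R_{F,11}^{-1}\mathbb B_{F,1}\le-\mathbb B_F^\top\mathbb R_F^{-1}\mathbb B_F\le-\mathbb B_{F,2}^\top\mathbb R_{F,22}^{-1}\mathbb B_{F,2}$. It follows from the two Schur-complement forms of $\mathbb R_F^{-1}$, using $\check R_{F,11}>0$ and $\check R_{F,22}<0$, and yields $|\mathcal H|\le c_3P_F^2/\varrho$. The bootstrap then closes, because the explicit bound $P_F^+(t)$ is strictly below $\bar\varrho$ for $t>0$.

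\textbf{What holds once the fix is supplied.} Your continuation argument is then a clean alternative to the paper's monotone approximation. Your uniqueness via local Lipschitz continuity is more careful than the paper's Gronwall step, which presupposes that both solutions lie in $[-\bar\varrho,\bar\varrho]$. Your Stage 3 verification by It\^o's formula plus uniqueness of \eqref{p} is a more rigorous version of the paper's direct substitution.
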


\emph{Proof:} See Appendix C. \hfill$\square$

\subsection{Optimal strategy for the leader}
We now proceed to investigate Problem (LOC) for the leader. This is a stochastic optimal control problem governed by CMF-FBSDE \eqref{p} with the aim of minimizing the cost functional
\begin{equation}\label{}
\begin{aligned}
&J_L\big(x_0; u_L(\cdot), \mathbf{u^*_{F_{1,2}}}(\cdot)\big)
=\frac{1}{2}\mathbb{E}\bigg\{\int_0^T\Big[Q_L(t,\alpha_{t-}){\color{black}{x_1^2}}(t)
+\bar Q_L(t,\alpha_{t-}){\color{black}{\widehat x_1^2}}(t) \\
&\qquad\qquad\qquad\qquad  +u_{L}^{\top}(t)R_{L}(t,\alpha_{t-})u_{L}(t)\Big]dt 
+G_L(\alpha_{T}){\color{black}{x_1^2}}(T)+\bar G_L(\alpha_{T}){\color{black}{\widehat x_1^2}}(T)\bigg\}.
\nonumber	
\end{aligned}
\end{equation}

\noindent\textbf{Assumption (L1)}
$Q_L(\cdot, i)\geq 0,\ \bar Q_L(\cdot,i)\geq 0,\ R_L(\cdot,i)>\textbf{0}, \ G_L(i)\geq 0,\ \bar G_L(i)\geq 0,\ i\in\mathcal M$. 

\noindent\textbf{Assumption (L2)}
(i) For any $i\in\mathcal M$, $B_L^{\top}(\cdot,i)D_L(\cdot,i)=D_L^{\top}(\cdot,i)B_L(\cdot,i)$, and\\ 
$B_L(\cdot,i)R_L^{-1}(\cdot,i)D_L^{\top}(\cdot,i)\geq 0$.  
(ii) For any $i\in\mathcal M$, $\bar C(\cdot,i)=0$, $D_L(\cdot,i)=\mathbf{D_F}(\cdot,i)=\textbf{0}$, $B_L\neq \textbf{0}$,  $\mathbf{B_F}(\cdot,i)\mathbf{R_F^{-1}}(\cdot,i)\mathbf{B_F^{\top}}(\cdot,i)\neq 0$.

Introduce these two fully coupled CMF-FBSDEs
\begin{equation}\label{2fbsde}
\left\{\begin{aligned}
& d{\color{black}{x_1}}(t)
=\big[A(t,\alpha_{t-}){\color{black}{x_1}}(t)+\bar{A}(t,\alpha_{t-}){\color{black}{\widehat x_1}}(t)
-\mathbf{B_F}(t,\alpha_{t-})\mathbf{R_F^{-1}}(t,\alpha_{t-}) \\
&\quad \times\big(\mathbf{B_F^{\top}}(t,\alpha_{t-}){\color{black}{y_1}}(t)
+\mathbf{D_F^{\top}}(t,\alpha_{t-}){\color{black}{z_1}}(t)\big)-B_L(t,\alpha_{t-})R_L^{-1}(t,\alpha_{t-})
\big(B_L^{\top}(t,\alpha_{t-})\\
&\quad \times {\color{black}{y_2}}(t)+D_L^{\top}(t,\alpha_{t-}){\color{black}{z_2}}(t)\big)
+b(t,\alpha_{t-})\big]dt+\big[C(t,\alpha_{t-}){\color{black}{x_1}}(t)
+\bar{C}(t,\alpha_{t-}){\color{black}{\widehat x_1}}(t) \\
&\quad-\mathbf{D_F}(t,\alpha_{t-})\mathbf{R_F^{-1}}(t,\alpha_{t-})
\big(\mathbf{B_F^{\top}}(t,\alpha_{t-}){\color{black}{y_1}}(t) 
 +\mathbf{D_F^{\top}}(t,\alpha_{t-}){\color{black}{z_1}}(t)\big)-D_L(t,\alpha_{t-})\\
&\quad \times R_L^{-1}(t,\alpha_{t-})\big(B_L^{\top}(t,\alpha_{t-}){\color{black}{y_2}}(t)
+D_L^{\top}(t,\alpha_{t-}){\color{black}{z_2}}(t)\big)+\sigma(t,\alpha_{t-})\big]dW(t), \\
& d{\color{black}{y_1}}(t)= -\big[A(t,\alpha_{t-}){\color{black}{y_1}}(t)
+\bar A(t,\alpha_{t-}){\color{black}{\widehat y_1}}(t)
+C(t,\alpha_{t-}){\color{black}{z_1}}(t)+\bar C(t,\alpha_{t-}){\color{black}{\widehat{z}_1}}(t)  \\
&\quad +Q_F(t,\alpha_{t-}){\color{black}{x_1}}(t)+\bar Q_F(t,\alpha_{t-}){\color{black}{\widehat x_1}}(t)\big]dt+{\color{black}{z_1}}(t)dW(t)+{\color{black}{\tilde z_1}}(t)\bullet dM(t), \\
&d{\color{black}{x_2}}(t)=\big[A(t,\alpha_{t-}){\color{black}{x_2}}(t)
+\bar{A}(t,\alpha_{t-}){\color{black}{\widehat x_2}}(t)
+\mathbf{B_F}(t,\alpha_{t-})\mathbf{R_F^{-1}}(t,\alpha_{t-})\big(\mathbf{B_F^{\top}}(t,\alpha_{t-})\\
&\quad \times {\color{black}{y_2}}(t)+\mathbf{D_F^{\top}}(t,\alpha_{t-}){\color{black}{z_2}}(t)\big)\big]dt 
 +\big[C(t,\alpha_{t-}){\color{black}{x_2}}(t)+\bar{C}(t,\alpha_{t-}){\color{black}{\widehat x_2(t)}} \\
&\quad +\mathbf{D_F}(t,\alpha_{t-})\mathbf{R_F^{-1}}(t,\alpha_{t-})
\big(\mathbf{B_F^{\top}}(t,\alpha_{t-}){\color{black}{y_2}}(t)
+\mathbf{D_F^{\top}}(t,\alpha_{t-}){\color{black}{z_2}}(t)\big)\big]dW(t), \\
& d{\color{black}{y_2}}(t)=-\big[A(t,\alpha_{t-}){\color{black}{y_2}}(t)+\bar A(t,\alpha_{t-}){\color{black}{\widehat y_2}}(t)+C(t,\alpha_{t-}){\color{black}{z_2}}(t)
+\bar C(t,\alpha_{t-}){\color{black}{\widehat z_2}}(t) \\ 
&\quad -Q_F(t,\alpha_{t-}){\color{black}{x_2}}(t)-\bar Q_F(t,\alpha_{t-}){\color{black}{\widehat x_2}}(t)+Q_L(t,\alpha_{t-}){\color{black}{x_1}}(t)
+\bar Q_L(t,\alpha_{t-}){\color{black}{\widehat x_1}}(t)\big]dt \\
&\quad +{\color{black}{z_2}}(t)dW(t)+{\color{black}{\tilde z_2}}(t)\bullet dM(t), \\
& {\color{black}{x_1}}(0)=x_0,\quad {\color{black}{y_1}}(T)=G_F(\alpha_T){\color{black}{x_1}}(T)
+\bar G_F(\alpha_T){\color{black}{\widehat x_1}}(T), \quad {\color{black}{x_2}}(0)=0,\\
& {\color{black}{y_2}}(T)=G_L(\alpha_T){\color{black}{x_1}}(T)
+\bar G_L(\alpha_T){\color{black}{\widehat x_1}}(T)-G_F(\alpha_T){\color{black}{x_2}}(T)
-\bar G_F(\alpha_T){\color{black}{\widehat x_2}}(T).
\end{aligned}\right.
\end{equation}
Note that {\color{black}{\eqref{2fbsde} is a linear form of \eqref{fbsde}.}} In what follows, we discuss the unique solvability of \eqref{2fbsde}.

\begin{Lemma}\label{L4.3}
Let Assumptions (F2), (F3), (L1) and (L2)-\emph{(i)} hold, then Assumptions (A1) and (A2) hold true for the coefficients of \eqref{2fbsde}.
\end{Lemma}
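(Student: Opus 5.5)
I would verify (A1) and (A2) directly, by identifying \eqref{2fbsde} with \eqref{fbsde} and choosing the constants and matrices in (A2) explicitly.

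\textbf{Identification.} Take $n=1$ and set
\begin{align*}
&\Psi_1(\mathbf y)=x_0,\quad \Psi_2(\mathbf y)=0,\\
&\Phi_1=G_F(i)x_1+\bar G_F(i)\widehat x_1,\quad
\Phi_2=G_L(i)x_1+\bar G_L(i)\widehat x_1-G_F(i)x_2-\bar G_F(i)\widehat x_2 .
\end{align*}
Let $(b_j,\sigma_j,g_j)$ be the drift, diffusion and generator coefficients read off from \eqref{2fbsde}. All of them are affine with bounded deterministic coefficients, so (A1) is immediate: the inhomogeneous terms $b,\sigma$ lie in the required spaces, and Lipschitz continuity follows from boundedness.

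\textbf{Choice of constants.} Since $\Psi$ is constant in $\mathbf y$, I take $\widetilde M_1=\widetilde M_2=\mathbf 0$. I choose $\mu_1=0$ and $\nu_1>0$, so that the $1/\mu_1$-type domination terms, which then have zero right-hand side, are dropped. The remaining bounds in \eqref{2.2} and \eqref{A3-1} then just restate the Lipschitz bounds with suitable $\widetilde G_j,\widetilde A_1,\widetilde L_2$:
\begin{itemize}
\item $\widetilde G_1=\widetilde G_1'=\widetilde A_1=\widetilde A_1'=0$ (justified below);
\item $\widetilde G_2=G_L$, $\widetilde G_2'=\bar G_L$;
\item $\widetilde L_2$ is built from the coefficients through which $\Theta_2$ enters $\Gamma_1$, namely $B_LR_L^{-1}B_L^\top$ and $B_LR_L^{-1}D_L^\top$ on $(y_2,z_2)$ in $b_1$, and the corresponding $D_L$ terms in $\sigma_1$.
\end{itemize}

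\textbf{Monotonicity (the main obstacle).} The core is verifying \eqref{2.4}, \eqref{2.5} and \eqref{A3-2} in conditional-expectation form. I would compute $\langle\Gamma_1(\Theta_1)-\Gamma_1(\Theta_1'),\Delta\Theta_1\rangle$ as
\[
\Delta y_1\,\Delta b_1+\Delta z_1\,\Delta\sigma_1+\Delta x_1\,\Delta g_1 .
\]
The $A$, $\bar A$, $C$, $\bar C$ cross terms cancel once conditional expectation is taken, using the identity
\[
\mathbb E\big[\widehat h\,k\mid\mathcal F^\alpha_{t-}\big]=\widehat h\,\widehat k .
\]
What remains is
\[
-\mathbb E\Big[(\mathbf B_F\Delta y_1+\mathbf D_F\Delta z_1)\,\mathbf R_F^{-1}\,(\mathbf B_F\Delta y_1+\mathbf D_F\Delta z_1)^{\top}\,\Big|\,\mathcal F^\alpha_{t-}\Big]
-\mathbb E\big[Q_F(\Delta x_1)^2\mid\mathcal F^\alpha_{t-}\big],
\]
with $\bar Q_F=0$ by (F3). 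The first term is $\le0$ by the nonnegativity in (F3), where the symmetry $\mathbf B_F^\top\mathbf D_F=\mathbf D_F^\top\mathbf B_F$ is used to write the three pieces as a single quadratic form. The second term is $\le0$ since $Q_F\ge0$. This gives \eqref{2.4} with $\widetilde A_1=0$. At the terminal time, $\Delta x_1\,\Delta\Phi_1=G_F(\Delta x_1)^2\ge0$ because $\bar G_F=0$, which is consistent with $\widetilde G_1=0$.

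The second block \eqref{2.5} is the mirror image, with the signs flipped in $x_2,y_2$. Its terminal term is $-G_F(\Delta x_2)^2\le0$.

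For the joint condition \eqref{A3-2} I pair with $\Lambda\Delta\Theta$. The mixed products produce:
\begin{itemize}
\item $-\Delta y_2\,B_LR_L^{-1}(B_L^\top\Delta y_2+D_L^\top\Delta z_2)$ and its $\sigma$ analogue. Together these form $-(B_L\Delta y_2+D_L\Delta z_2)R_L^{-1}(\cdot)^\top$, which by (L1) and (L2)(i) is bounded above by $-\mu_2|\widetilde L_2\Delta\Theta_2|^2$ for a suitable small $\mu_2$ and $\widetilde L_2=R_L^{-1/2}(B_L^\top,D_L^\top)$ acting on $(\Delta y_2,\Delta z_2)$.
\item $-\Delta x_1\,(Q_L\Delta x_1+\bar Q_L\widehat{\Delta x_1})$. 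After conditioning this is $\le0$ by (L1).
\item The $\mathbf R_F$ and $Q_F$ cross terms between the two blocks, which cancel pairwise by construction.
\end{itemize}
The terminal pairing gives
\[
\mathbb E\big[G_L(\Delta x_1)^2+\bar G_L(\widehat{\Delta x_1})^2\mid\mathcal F^\alpha_{T-}\big]\ge0 .
\]
Here I expect a real difficulty: matching the $\mu_2$-weights to exactly $\widetilde G_2,\widetilde L_2$ in \eqref{A3-1}. I would first try to set the weights of those quantities appearing in domination identical to those appearing in monotonicity, rescaling by $K_0$.
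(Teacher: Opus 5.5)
There is a genuine gap, and it sits in your choice of the matrices in (A2). For each domination--monotonicity pair you check only the monotonicity half, and your choices break the other half.

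\textbf{The choice $\widetilde G_1=\widetilde G_1'=\widetilde A_1=\widetilde A_1'=0$ breaks domination.} With it, the second and third lines of \eqref{2.2} read $|\Phi_j(x_j,\dots)-\Phi_j(x_j',\dots)|\le 0$ and $|g_j(x_j,\dots)-g_j(x_j',\dots)|\le 0$. The constant in these lines is the finite $K_0'$, not $1/\mu_1$, so nothing becomes vacuous when $\mu_1=0$. (The $1/\mu_1$-lines are dropped because $1/\mu_1=+\infty$, not because their right-hand side is zero.) But $\Phi_1=G_Fx_1$, $\Phi_2$ contains $-G_Fx_2$, $g_1$ contains $-Q_Fx_1$, and $g_2$ contains $Q_Fx_2$. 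So domination fails unless $G_F\equiv Q_F\equiv 0$.

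\textbf{The choice $\widetilde G_2=G_L$, $\widetilde G_2'=\bar G_L$ breaks monotonicity.} These matrices act on $(\Delta x_2,\Delta\widehat x_2)$ and must control how $\Phi_1$ depends on block 2. With your choice, the terminal line of \eqref{A3-2} is false. After the $G_F$ cross terms cancel, its left side is $G_L\mathbb E[|\Delta x_1|^2\mid\mathcal F^\alpha_{T-}]+\bar G_L|\Delta\widehat x_1|^2$, which involves only block-1 increments. The right side would be $\mu_2\mathbb E[|G_L\Delta x_2+\bar G_L\Delta\widehat x_2|^2\mid\mathcal F^\alpha_{T-}]$. Taking $\Delta x_1=0$ and $\Delta x_2\neq 0$ gives a contradiction.

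\textbf{The fix, which is the paper's choice.} Take $\widetilde G_1=\sqrt{G_F}$ and $\widetilde A_1=\sqrt{Q_F}$; this is legitimate because (F3) gives $G_F,Q_F\ge 0$. Take $\widetilde G_1'=\widetilde A_1'=0$, $\nu_1=1$ and $K_0'=\max\{\sqrt{G_F},\sqrt{Q_F}\}+1$. Take $\widetilde M_2=\widetilde G_2=\widetilde G_2'=0$, since $\Psi$ is constant and $\Phi_1$ does not depend on $x_2$. Then:
\begin{itemize}
\item the quantities you already computed, $-Q_F|\Delta x_1|^2$ and $G_F|\Delta x_1|^2$ (with opposite signs in block 2), match the $\nu_1$-bounds with equality;
\item the $G_L,\bar G_L,Q_L,\bar Q_L$ terms are just nonnegative slack by (L1);
\item the ``real difficulty'' you anticipate disappears, and $\mu_2=1$ works throughout.
\end{itemize}

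\textbf{What is already right.} Your vector-valued $\widetilde L_2$, with $\widetilde L_2\Delta\Theta_2=R_L^{-1/2}(B_L^\top\Delta y_2+D_L^\top\Delta z_2)$, is correct. It is slightly cleaner than the paper's scalar $(0,\sqrt{B_LR_L^{-1}B_L^\top},\sqrt{D_LR_L^{-1}D_L^\top})$, because it needs no appeal to (L2)-(i) to factor the quadratic form.

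\textbf{One step to tighten.} Because $\mathbf{R_F}$ is indefinite, $v\mathbf{R_F}^{-1}v^\top\ge 0$ for $v=\mathbf{B_F}\Delta y_1+\mathbf{D_F}\Delta z_1$ does not follow from the signs of the three scalars in (F3). You need the fact that $\mathbf{B_F}^\top\mathbf{D_F}=\mathbf{D_F}^\top\mathbf{B_F}$ forces $\mathbf{D_F}=c\,\mathbf{B_F}$ (or $\mathbf{B_F}=\mathbf 0$). That reduces the form to a nonnegative multiple of a square.
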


\emph{Proof:} See Appendix D. \hfill$\square$

Now we present the open-loop optimal strategy for the leader.

\begin{Theorem}\label{th-F}
Let Assumptions (L1) and (L2)-\emph{(i)} hold. Then, Problem (LOC) admits a unique optimal strategy given by
\begin{equation}\label{uL*}
u_L^*(t)=-R_L^{-1}(t,\alpha_{t-})\big(B_L^{\top}(t,\alpha_{t-}){\color{black}{y_2}}(t)
+D_L^{\top}(t,\alpha_{t-}){\color{black}{z_2}}(t)\big),
\end{equation}
where $\left({\color{black}{y_2}}(\cdot),{\color{black}{z_2}}(\cdot)\right)$ is given by \eqref{2fbsde}. Moreover, $\big(\mathbf{u^*_{F_{1,2}}}(\cdot),u_L^*(\cdot)\big)$ defined by \eqref{u-pq} and \eqref{uL*} is the unique open-loop Stackelberg equilibrium of Problem (CMF-SDG).
\end{Theorem}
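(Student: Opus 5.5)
The plan is to treat Problem (LOC) as an LQ optimal control problem whose state is the Hamiltonian system \eqref{p}. Here the controlled "state" is the quadruple $(x_1,y_1,z_1,\tilde z_1)$, driven linearly by $u_L$, and we minimize $J_L$ over $\mathcal U_L$.

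\textbf{Step 1: well-posedness.} For each $u_L\in\mathcal U_L$, \eqref{p} is uniquely solvable by Theorem \ref{eufbsde}, via the special case noted after Theorem \ref{OL}. By the linearity of \eqref{p}, the map $u_L\mapsto(x_1,y_1,z_1)$ is affine and continuous, with the continuity coming from estimate \eqref{5-1}. Consequently $u_L\mapsto J_L(x_0;u_L,\mathbbm u^*_{F_{1,2}}[u_L,x_0])$ is a continuous quadratic functional on the Hilbert space $\mathcal U_L$. Assumption (L1) gives $Q_L,\bar Q_L,G_L,\bar G_L\ge0$ and $R_L>0$ uniformly, which should be taken as $R_L\ge\delta\mathbf I$. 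Hence this functional is strictly (indeed uniformly) convex and coercive. Therefore a unique minimizer $u_L^*$ exists, and it is characterized by the vanishing of the Gâteaux derivative.

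\textbf{Step 2: first-order condition.} Fix $u_L^*$ and a perturbation $v\in\mathcal U_L$, and let $(\delta x_1,\delta y_1,\delta z_1,\delta\tilde z_1)$ solve the homogeneous variational system, namely \eqref{p} with $b=\sigma=0$, $x_0=0$ and $u_L=v$. The derivative is
\[
\mathbb E\Big\{\int_0^T\big[Q_Lx_1\delta x_1+\bar Q_L\widehat x_1\widehat{\delta x_1}+u_L^{*\top}R_Lv\big]dt+G_Lx_1(T)\delta x_1(T)+\bar G_L\widehat x_1(T)\widehat{\delta x_1}(T)\Big\}.
\]
We introduce the adjoint pair $(x_2,y_2,z_2,\tilde z_2)$ from \eqref{2fbsde}. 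In this pair, $x_2$ is forward with $x_2(0)=0$ and is adjoint to the backward variable $y_1$, while $y_2$ is backward and adjoint to $x_1$. We then apply Itô's formula, with jumps from $M$, to $\langle y_2,\delta x_1\rangle-\langle x_2,\delta y_1\rangle$ on $[0,T]$. The cross terms with $A,\bar A,C,\bar C$ cancel. For the conditional mean-field terms, we use $\mathbb E[\xi\widehat\eta]=\mathbb E[\widehat\xi\widehat\eta]$, which holds because coefficients depending on $\alpha_{t-}$ are $\mathcal F^\alpha_{t-}$-measurable. The martingale terms in $dW$ and $dM$ vanish in expectation by square integrability. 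The $\mathbf B_F,\mathbf D_F$ terms cancel by the symmetry of $\mathbf R_F^{-1}$. The $Q_F$ terms cancel pairwise, thanks to the $-Q_Fx_2$ drift of $y_2$ and the terminal term $-G_Fx_2(T)$. The boundary values $\delta x_1(0)=0$ and $x_2(0)=0$ remove the time-$0$ contributions. After these cancellations, the derivative reduces to $\mathbb E\int_0^T\langle R_Lu_L^*+B_L^\top y_2+D_L^\top z_2,v\rangle dt$. Setting this to zero for all $v$ gives \eqref{uL*}. Substituting \eqref{uL*} back into \eqref{p} produces exactly the coupled system \eqref{2fbsde}.

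\textbf{Step 3: existence and uniqueness of the equilibrium.} By Lemma \ref{L4.3}, (A1)--(A2) hold for \eqref{2fbsde}, so Theorem \ref{eufbsde} gives a unique solution. Sufficiency follows from convexity: any solution of \eqref{2fbsde} yields a $u_L$ satisfying the first-order condition, and this $u_L$ is therefore the unique minimizer. Combining this with Theorem \ref{OL} for the followers' response shows that $(\mathbf u^*_{F_{1,2}},u_L^*)$ is the unique open-loop Stackelberg equilibrium.

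\textbf{Main obstacle.} The main difficulty is the duality computation in Step 2. The conditional expectations must be handled at $\mathcal F^\alpha_{t-}$ rather than $\mathcal F^\alpha_t$, the jump martingale terms must be controlled, and the signs must be tracked so that the forward/backward roles of $x_2$ and $y_2$ make every follower-coupling term cancel. A secondary point is justifying coercivity. Here we rely only on $R_L>0$ and the nonnegativity of the state weights, and we need a uniform lower bound on $R_L$ to get coercivity and hence existence.
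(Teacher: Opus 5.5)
There is one concrete error, and it sits in the step you flagged as the main obstacle: the duality functional has the wrong sign. Apart from that, your plan is the paper's proof in Appendix E. That proof pairs the variation of \eqref{p} with the adjoint pair $(x_2,y_2)$ from \eqref{2fbsde}, reduces the first-order term to $\mathbb E\int_0^T\langle R_Lu_L^*+B_L^\top y_2+D_L^\top z_2,v\rangle dt$, and concludes by convexity.

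The follower-coupling terms are fixed by the sign conventions of \eqref{2fbsde}:
drift $+\mathbf{B_F}\mathbf{R_F^{-1}}(\mathbf{B_F^\top}y_2+\mathbf{D_F^\top}z_2)$ and diffusion $+\mathbf{D_F}\mathbf{R_F^{-1}}(\mathbf{B_F^\top}y_2+\mathbf{D_F^\top}z_2)$ for $x_2$;
the term $-Q_Fx_2-\bar Q_F\widehat x_2$ inside $dy_2=-[\cdots]dt$;
and $-G_Fx_2(T)-\bar G_F\widehat x_2(T)$ in $y_2(T)$.
With these, the It\^o identity for $y_2\,\delta x_1$ contains the integrand terms $-\mathbb E[y_2\mathbf{B_F}\mathbf{R_F^{-1}}(\mathbf{B_F^\top}\delta y_1+\mathbf{D_F^\top}\delta z_1)+z_2\mathbf{D_F}\mathbf{R_F^{-1}}(\mathbf{B_F^\top}\delta y_1+\mathbf{D_F^\top}\delta z_1)]+\mathbb E[(Q_Fx_2+\bar Q_F\widehat x_2)\delta x_1]$. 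At $t=T$ it contains $-\mathbb E[(G_Fx_2(T)+\bar G_F\widehat x_2(T))\delta x_1(T)]$. The identity for $x_2\,\delta y_1$ contains exactly the negatives of these, after the swaps $\mathbb E[\xi\widehat\eta]=\mathbb E[\widehat\xi\widehat\eta]$ and using the symmetry of $\mathbf{R_F}$.

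So the $\mathbf{B_F},\mathbf{D_F},Q_F,\bar Q_F,G_F,\bar G_F$ terms cancel for $\langle y_2,\delta x_1\rangle+\langle x_2,\delta y_1\rangle$. For your $\langle y_2,\delta x_1\rangle-\langle x_2,\delta y_1\rangle$ they double, so the reduction you claim is false as written. The $A,\bar A,C,\bar C$ terms cancel within each product, so they are unaffected either way. The repair is to use the sum, as the paper does when it applies It\^o's formula to $(\widetilde x_1-x_1)y_2+x_2(\widetilde y_1-y_1)$. The rest of your Step 2 then goes through.

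With that fixed, your argument contains the paper's and adds two pieces the statement does not need.

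\textbf{Step 1 is superfluous.} The paper proves only sufficiency. Existence of $u_L^*$ comes from the unique solvability of \eqref{2fbsde} (Lemma \ref{L4.3} and Theorem \ref{eufbsde}). The exact expansion $J_L(\widetilde u_L)-J_L(u_L^*)=\tfrac12\mathbb E\{\cdots\}+\Upsilon_L$ with $\Upsilon_L=0$ gives optimality, and $R_L>0$ in the quadratic remainder gives uniqueness. Your Step 3 already yields existence the same way. You can therefore drop the coercivity argument of Step 1, together with the strengthened hypothesis $R_L\ge\delta\mathbf I$ you added to (L1); strict convexity needs only $R_L>0$.

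\textbf{The necessity half of Step 2 is redundant.} Uniqueness of the minimizer already forces every optimal control to equal \eqref{uL*}. If you keep it, note that before you know $u_L^*$ has the form \eqref{uL*}, the adjoint $(x_2,y_2,z_2,\tilde z_2)$ cannot yet be taken ``from \eqref{2fbsde}''. It must be defined as the solution of the $(x_2,y_2)$-subsystem of \eqref{2fbsde} with the optimal $x_1$ frozen, and that subsystem is itself a coupled linear CMF-FBSDE. After $y_2\mapsto-y_2$ it has the structure of \eqref{p}, with $x_0=0$ and inhomogeneities $Q_Lx_1+\bar Q_L\widehat x_1$ and $G_Lx_1(T)+\bar G_L\widehat x_1(T)$. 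It is therefore solvable under (F2)--(F3), but you need to say so.
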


\emph{Proof:} See Appendix E. \hfill$\square$

Now we expect to give the feedback form of $u_L^*(\cdot)$ by some Riccati equations. Denote
\begin{equation}\label{COEF}
\begin{gathered}
\mathcal X=
\begin{pmatrix}
{\color{black}{x_1}} \\
{\color{black}{x_2}} \\
\end{pmatrix},\quad  
\mathcal Y=
\begin{pmatrix}
{\color{black}{y_2}} \\
{\color{black}{y_1}} \\
\end{pmatrix},\quad
\mathcal Z=
\begin{pmatrix}
{\color{black}{z_2}} \\
{\color{black}{z_1}} \\
\end{pmatrix},\quad 
\widetilde {\mathcal Z}=
\begin{pmatrix}
{\color{black}{\tilde z_2}} \\
{\color{black}{\tilde z_1}} \\
\end{pmatrix},\quad 
\mathcal X_0=
\begin{pmatrix}
x_0 \\
0 \\
\end{pmatrix},\\
\mathcal B_1=\begin{pmatrix}
-B_LR_L^{-1}B_L^{\top}  & -\mathbf{B_F}\mathbf{R_F^{-1}}\mathbf{B_F^{\top}} \\
\mathbf{B_F}\mathbf{R_F^{-1}}\mathbf{B_F^{\top}}  & 0  \\
\end{pmatrix}, \quad
\mathcal B_2=\begin{pmatrix}
-B_LR_L^{-1}D_L^{\top}  & -\mathbf{B_F}\mathbf{R_F^{-1}}\mathbf{D_F^{\top}} \\
\mathbf{B_F}\mathbf{R_F^{-1}}\mathbf{D_F^{\top}}  & 0  \\
\end{pmatrix}, \\
\mathcal D_1=\begin{pmatrix}
-D_LR_L^{-1}B_L^{\top}  & -\mathbf{D_F}\mathbf{R_F^{-1}}\mathbf{B_F^{\top}} \\
\mathbf{D_F}\mathbf{R_F^{-1}}\mathbf{B_F^{\top}}  & 0  \\
\end{pmatrix}, \quad
\mathcal D_2=\begin{pmatrix}
-D_LR_L^{-1}D_L^{\top}  & -\mathbf{D_F}\mathbf{R_F^{-1}}\mathbf{D_F^{\top}} \\
\mathbf{D_F}\mathbf{R_F^{-1}}\mathbf{D_F^{\top}}  & 0  \\
\end{pmatrix}, \\
\mathcal H=\begin{pmatrix}
 H & 0\\
0 & H\\
\end{pmatrix} \ \text{with} \ H=A, \bar A, C, \bar C,\quad
\mathcal K=\begin{pmatrix}
K_L & -K_F  \\
K_F & 0 \\
\end{pmatrix} \  \text{with} \ K=Q, \bar Q, G, \bar G, \\
\mathcal B=\begin{pmatrix}
B_L \\
0 \\
\end{pmatrix}, \quad
\mathcal D=\begin{pmatrix}
D_L \\
0 \\
\end{pmatrix}, \quad
\mathcal E=\begin{pmatrix}
1 \\
0 \\
\end{pmatrix}.
\end{gathered}
\end{equation}
Then, \eqref{2fbsde} and \eqref{uL*} are written as
\begin{equation}\label{XY}
\left\{\begin{aligned}
&d\mathcal X(t)=\big[\mathcal A(t,\alpha_{t-})\mathcal X(t)+\bar{\mathcal A}(t,\alpha_{t-})\widehat{\mathcal X}(t)+\mathcal B_1(t,\alpha_{t-})\mathcal Y(t)+\mathcal B_2(t,\alpha_{t-})\mathcal Z(t) \\
&\quad +\mathcal{E} b(t)\big]dt+\big[\mathcal C(t,\alpha_{t-})\mathcal X(t)+\bar{\mathcal C}(t,\alpha_{t-})\widehat{\mathcal X}(t)+\mathcal D_1(t,\alpha_{t-})\mathcal Y(t)+\mathcal D_2(t,\alpha_{t-})\mathcal Z(t) \\
&\quad +\mathcal{E}\sigma(t)\big]dW(t), \\
&d\mathcal Y(t)=-\big[\mathcal Q(t,\alpha_{t-})\mathcal X(t)+\bar{\mathcal Q}(t,\alpha_{t-})\widehat{\mathcal X}(t)+\mathcal A(t,\alpha_{t-})\mathcal Y(t)+\bar{\mathcal A}(t,\alpha_{t-})\widehat{\mathcal Y}(t) \\
&\quad +\mathcal C(t,\alpha_{t-})\mathcal Z(t)+\bar{\mathcal C}(t,\alpha_{t-})\widehat{\mathcal Z}(t)\big]dt+\mathcal Z(t)dW(t)+\widetilde{\mathcal Z}(t)\bullet dM(t),\\
& \mathcal X(0)={\mathcal X}_0, \quad
\mathcal Y(T)=\mathcal G(\alpha_{T})\mathcal X(T)+\bar{\mathcal G}(\alpha_{T})\widehat{\mathcal X}(T),
\end{aligned}\right.
\end{equation}
and
\begin{equation}\label{opuL}
u_L^*(t)=-R_L^{-1}(t,\alpha_{t-})\big[\mathcal{B}^{\top}(t,\alpha_{t-})\mathcal Y(t)
+\mathcal{D}^{\top}(t,\alpha_{t-})\mathcal Z(t)\big].
\end{equation}

To decouple \eqref{XY}, we introduce the following auxiliary equations:
\begin{equation}\label{PL-1}
\left\{\begin{aligned}
& \dot{P}_L(t,i)+P_L(t,i)\mathcal A(t,i)+\mathcal A(t,i)P_L(t,i)+P_L(t,i)\mathcal B_1(t,i)P_L(t,i)
+\mathcal Q(t,i) \\
&\quad +\sum_{j\in\mathcal M}\lambda_{ij}\big[P_L(t,j)-P_L(t,i)\big]=\textbf{0},\\
&P_L(T,i)=\mathcal G(i), \quad i\in\mathcal M,
\end{aligned}\right.
\end{equation}
\begin{equation}\label{PL-2}
\left\{\begin{aligned}
&\dot{\bar P}_L(t,i)+\bar P_L(t,i)\mathcal B_1(t,i)\bar P_L(t,i) 
+\Big[2\big(\mathcal A(t,i)+\bar{\mathcal A}(t,i)\big)+P_L(t,i)\mathcal B_1(t,i)\Big]\bar P_L(t,i) \\
&\quad +\bar P_L(t,i)\Big[\mathcal B_1(t,i)+\mathcal B_2(t,i)\mathcal C(t,i)\Big]P_L(t,i) 
 +2\mathcal A(t,i)P_L(t,i)+\bar{\mathcal Q}(t,i) \\
&\quad +\sum_{j\in\mathcal M}\lambda_{ij}\Big[\bar P_L(t,j)-\bar P_L(t,i)\Big]=\textbf{0}, \\
&\bar P_L(T,i)=\bar{\mathcal G}(i), \quad i\in\mathcal M,
\end{aligned}\right.
\end{equation}
and
\begin{equation}\label{PL-3}
\left\{\begin{aligned}
&d\tau(t)=-\Big\{\mathbb{B}^{1}_{{\mathcal A}P_L}(t,\alpha_{t-})\tau(t) 
+\mathbb{B}^{1}_{\bar{\mathcal A}\bar P_L}(t,\alpha_{t-})\widehat{\tau}(t)
 +\mathbb{B}^{2}_{{\mathcal C}P_L}(t,\alpha_{t-})P_L(t,\alpha_{t-})  \\
&\quad \times \big(P_L(t,\alpha_{t-})\mathcal{E}\sigma(t,\alpha_{t-})+\tau_W(t)\big) 
  +\bar P_L(t,\alpha_{t-})\mathcal B_2(t,\alpha_{t-})P_L(t,\alpha_{t-}) \\
&\quad\times \big(P_L(t,\alpha_{t-})\mathcal{E}\widehat{\sigma}(t,\alpha_{t-})+\widehat{\tau}_W(t)\big) 
+P_L(t,\alpha_{t}){\mathcal E}b(t,\alpha_{t-}) \\
&\quad +\bar P_L(t,\alpha_{t-}){\mathcal E}\widehat b(t,\alpha_{t-})\Big\}dt  
+\tau_W(t)dW(t)+\tau_M(t)\bullet dM(t),\\
&\tau(T)=\textbf{0},
\end{aligned}\right.
\end{equation}
where $\mathbb{B}^{j}_{XY}(t,i)=X+Y\mathcal B_j$, $j=1,2$. 

In fact, $\mathcal Y(\cdot)$ and $\mathcal Z(\cdot)$ given by \eqref{XY} can be expressed as 
\begin{equation}\left\{\begin{aligned}\label{YZ-X}
&\mathcal Y(t)=P_L(t,\alpha_{t-})\mathcal X(t)+\bar P_L (t,\alpha_{t-})\widehat{\mathcal X}(t)+\tau(t), \\
&\mathcal Z(t)=P_L(t,\alpha_{t-})\big[\mathcal C(t,\alpha_{t-})\mathcal X(t)
 +\mathcal E(t,\alpha_{t-})\sigma(t)\big]+\tau_W(t),
\end{aligned}\right.\end{equation}
where $P_L(\cdot,i), \bar P_L(\cdot,i), \big(\tau(\cdot),\tau_W(\cdot)\big)$ are given by \eqref{PL-1}-\eqref{PL-3}, $i\in\mathcal M$. With \eqref{YZ-X}, the strategy $u^*_L(\cdot)$ in \eqref{opuL} can be expressed as an affine function of the process $\mathcal X(\cdot)$ and its conditional expectation $\widehat{\mathcal X}(\cdot)$. In this sense, we can obtain the feedback form of $u^*_L(\cdot)$.

\begin{Theorem}\label{th3.5}
With Assumptions (L1)-(L2), \eqref{PL-1}-\eqref{PL-3} have unique solutions. 
Moreover, the optimal strategy $u^*_L(\cdot)$ is uniquely solvable with closed-loop representation 
\begin{equation}\begin{aligned}\label{closed-ld}
u_L^*(t)=&\ -R_L^{-1}(t,\alpha_{t-})\mathcal{B}^{\top}(t,\alpha_{t-})
\Big[P_L(t,\alpha_{t-})\mathcal X^*(t)+\bar P_L (t,\alpha_{t-})\widehat{\mathcal X}^*(t)+\tau(t)\Big],
\end{aligned}\end{equation}
where $\mathcal X^*(\cdot)$ satisfies
\begin{equation}\label{closed-xx}
\left\{\begin{aligned}
& d\mathcal X^*(t)=\Big[\big(\mathcal A(t,\alpha_{t-})+\mathcal B_1(t,\alpha_{t-})P_L(t,\alpha_{t-})\big)
\mathcal X^*(t)+\big(\bar{\mathcal A}(t,\alpha_{t-})+\mathcal B_1(t,\alpha_{t-}) \\
&\ \times\bar P_L(t,\alpha_{t-})\big)\widehat{\mathcal X}^*(t)
+\mathcal B_1(t,\alpha_{t-})\tau(t)+\mathcal{E} b(t)\Big]dt 
 +\Big[\mathcal C(t,\alpha_{t-})\mathcal X^*(t)+\mathcal{E}\sigma(t)\Big]dW(t), \\
& \mathcal X^*(0)={\mathcal X}_0,
\end{aligned}\right.
\end{equation}
and $P_L(\cdot,i), \bar P_L(\cdot,i), \tau(\cdot)$ are given by \eqref{PL-1}-\eqref{PL-3}.
\end{Theorem}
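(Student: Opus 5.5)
Under (L2)-(ii) we have $\bar C=0$, $D_L=0$, $\mathbf{D_F}=0$, hence $\mathcal B_2=\mathcal D_1=\mathcal D_2=\textbf{0}$ and $\bar{\mathcal C}=\textbf{0}$. The system \eqref{XY} then reduces to a linear CMF-FBSDE in which $\mathcal Z$ does not enter the forward equation. I will use this simplification throughout, and I will treat \eqref{PL-1}, \eqref{PL-2}, \eqref{PL-3} in that order, since each equation depends only on the solutions of the preceding ones.

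\textbf{Step 1: solvability of \eqref{PL-1}.} This is a coupled matrix Riccati system with Markov-chain coupling and an indefinite quadratic term $P_L\mathcal B_1P_L$ ($\mathcal B_1$ is not sign-definite). I plan to obtain it from Theorem \ref{eufbsde} by the usual correspondence between Riccati equations and FBSDEs. By Lemma \ref{L4.3}, the homogeneous version of \eqref{XY} on $[s,T]$ (with $b=\sigma=0$ and no mean-field terms) is uniquely solvable for every initial time $s$, every initial state $\xi$ and every regime $\alpha_{s}=i$. Moreover, estimate \eqref{5-1} gives $|\mathcal Y(s)|\le K|\xi|$ uniformly in $s$. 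Since the homogeneous problem is linear, I define $P_L(s,i)$ by $\mathcal Y(s)=P_L(s,i)\xi$; the uniform estimate then rules out blow-up of the local Riccati solution. Concretely, I will take the local solution of \eqref{PL-1} near $T$ (which exists by Lipschitz ODE theory for the finite system indexed by $i\in\mathcal M$) and extend it backward. On the common interval I apply Itô's formula for regime-switching processes to $P_L(t,\alpha_{t-})\mathcal X(t)$. This shows that it solves the backward equation with the same data, so by uniqueness it equals $\mathcal Y$, which supplies the a priori bound $|P_L|\le K$. Uniqueness of $P_L$ follows from a Gronwall argument on the difference of two solutions.

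\textbf{Step 2: solvability of \eqref{PL-2} and \eqref{PL-3}.} With $P_L$ bounded, \eqref{PL-2} is again a Riccati-type system in $\bar P_L$ with bounded coefficients. I will handle it the same way, using the conditional expectation $\widehat{\mathcal X}$. Taking $\mathbb E[\cdot|\mathcal F^{\alpha}_{t-}]$ in \eqref{XY} yields a forward-backward system for $(\widehat{\mathcal X},\widehat{\mathcal Y})$ driven only by the chain. Its unique solvability is inherited from Theorem \ref{eufbsde}, and it gives $\widehat{\mathcal Y}=(P_L+\bar P_L)\widehat{\mathcal X}$. Hence $P_L+\bar P_L$ solves a Riccati system of the same form as \eqref{PL-1}, with $\mathcal A+\bar{\mathcal A}$ and $\mathcal Q+\bar{\mathcal Q}$ in place of $\mathcal A$ and $\mathcal Q$, and it is bounded by the same argument. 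This yields $\bar P_L$. Equation \eqref{PL-3} is then a linear conditional mean-field BSDE with Markov jumps and bounded coefficients, so it has a unique solution $(\tau,\tau_W,\tau_M)$ by the standard result of Nguyen et al.\ \cite{Yin2020} (equivalently, as a degenerate case of Theorem \ref{eufbsde}).

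\textbf{Step 3: closed loop.} Given these solutions, I let $\mathcal X^*$ solve \eqref{closed-xx}, which is a linear conditional mean-field SDE and therefore uniquely solvable. I then define $\mathcal Y,\mathcal Z$ through \eqref{YZ-X}. Applying Itô's formula to $P_L(t,\alpha_{t-})\mathcal X^*(t)$ and to $\bar P_L(t,\alpha_{t-})\widehat{\mathcal X}^*(t)$ (the latter satisfies an ODE-type equation with jumps), and using \eqref{PL-1}--\eqref{PL-3}, I check that $(\mathcal X^*,\mathcal Y,\mathcal Z,\widetilde{\mathcal Z})$ solves \eqref{XY}. The jump part is $\widetilde{\mathcal Z}_{ij}=[P_L(\cdot,j)-P_L(\cdot,i)]\mathcal X^*+[\bar P_L(\cdot,j)-\bar P_L(\cdot,i)]\widehat{\mathcal X}^*+\tau_M$. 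By uniqueness (Lemma \ref{L4.3} together with Theorem \ref{eufbsde}), this is the solution of \eqref{XY}. Substituting into \eqref{opuL} with $\mathcal D=0$ gives \eqref{closed-ld}, and Theorem \ref{th-F} gives the uniqueness of $u_L^*$.

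The main obstacle is Step 1: getting the a priori bound that excludes finite-time blow-up of the indefinite, Markov-coupled Riccati system. It also requires care to ensure that the FBSDE correspondence is applied consistently with the regime $\alpha_{s-}=i$ at the starting time.
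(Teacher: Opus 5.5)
Your route differs from the paper's. The paper rewrites \eqref{PL-1} as the block-diagonal equation \eqref{PL-1b}, absorbing the Markov coupling into $\sum_k\mathcal N_k\mathbf P_L\mathcal N_k^{\top}$. It then cites the classical existence theorem for stochastic Riccati equations (Yong--Zhou, Ch.~6), asserting $\pmb{\mathcal B}_1<\mathbf 0$, $\pmb{\mathcal Q}\ge\mathbf 0$, $\pmb{\mathcal G}\ge\mathbf 0$. It treats \eqref{PL-2} ``similarly'', gets \eqref{PL-3} from Theorem \ref{eufbsde}, and obtains the closed loop by substituting \eqref{YZ-X}. You instead combine local ODE existence with a no-blow-up bound taken from the well-posedness and the estimate \eqref{5-1} of the linear FBSDE.

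The paper's route is a one-line citation, but that theorem concerns symmetric data with a quadratic term $-PBR^{-1}B^{\top}P$. Here $\mathcal B_1,\mathcal Q,\mathcal G$ have antisymmetric off-diagonal blocks ($\mp\mathbf{B_F}\mathbf{R_F^{-1}}\mathbf{B_F^{\top}}$, $\mp Q_F$, $\mp G_F$), so the citation does not literally cover \eqref{PL-1}. In particular, the symmetric part of $\mathcal B_1$ is only $\mathrm{diag}(-B_LR_L^{-1}B_L^{\top},0)\le 0$, and already $P_L(T)=\mathcal G$ is non-symmetric. Your argument needs only the $\Lambda$-monotonicity verified in Lemma \ref{L4.3}, which is exactly what replaces positivity in this non-symmetric setting. (That lemma, for you as for the paper, tacitly uses (F2)--(F3).) The price is bookkeeping: restarting at $(s,i)$ and a constant in \eqref{5-1} uniform in $s$. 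Your Step 3, which checks that the decoupled processes solve \eqref{XY} and then invokes uniqueness, is also a more complete version of the paper's substitution step.

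Two points need repair.

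\emph{Step 2.} Conditioning \eqref{XY} on $\mathcal F^{\alpha}_{t-}$ does not give a closed system in $(\widehat{\mathcal X},\widehat{\mathcal Y})$. (L2)-(ii) kills $\bar C$ but not $C$, so the backward drift keeps $\mathcal C\widehat{\mathcal Z}$. After substituting $\widehat{\mathcal Z}=P_L\mathcal C\widehat{\mathcal X}$, no cross-variation term remains to cancel it in the monotonicity computation, so Theorem \ref{eufbsde} cannot simply be quoted for that reduced system. Instead, run your Step 1 argument on the full homogeneous mean-field system. On the maximal interval of $\bar P_L$, build a solution from the closed-loop CMF-SDE started at a deterministic $\xi$ at time $s$. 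Identify it with the unique solution given by Lemma \ref{L4.3} and Theorem \ref{eufbsde}. Then use $\widehat{\mathcal X}(s)=\xi$ to get $|(P_L+\bar P_L)(s,i)\xi|\le K|\xi|$.

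\emph{The It\^o computation in Steps 1 and 3.} It will not reproduce the printed equations. Since $\mathcal Z=P_L\mathcal C\mathcal X+\cdots$, the drift term $\mathcal C\mathcal Z$ produces $\mathcal C P_L\mathcal C\mathcal X$. So the Riccati equation that actually decouples \eqref{XY} is \eqref{PL-1} plus the term $\mathcal C P_L\mathcal C$ (compare $C^2P_F$ in \eqref{eq1}), which the printed \eqref{PL-1} omits. Likewise, $2\mathcal A P_L$ in \eqref{PL-2} comes out as $2\bar{\mathcal A}P_L$, and the $(\sigma,\tau_W)$-term in \eqref{PL-3} comes out as $\mathcal C(P_L\mathcal E\sigma+\tau_W)$. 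With the printed \eqref{PL-1}, your verification closes only when $C=0$. What your argument proves is well-posedness of the corrected equations, and those are the ones for which \eqref{closed-ld} holds.
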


\emph{Proof:} See Appendix F. \hfill$\square$

\begin{Remark}\label{yuqi}
Since \eqref{eq3} is a BSDE, the value $\left(\phi(t),\phi_W(t),\phi_M(t)\right)$ at time $t$ depends on $\{u_L(s)|s\in[0,T]\}$. Thus, $\mathbf{u^*_{F_{1,2}}}(\cdot)$ given by \eqref{closed-ll} depends on $\{u_L(s)|s\in[0,T]\}$, which means that $\mathbf{u^*_{F_{1,2}}}(\cdot)$ is anticipating, i.e., $\mathbf{u^*_{F_{1,2}}}(\cdot)$ could depend on the future values $\{u_L(s)|s\in[0,T]\}$. See Yong \cite{Yong2002} for more information about ``anticipating''. For the leader, the optimal strategy $u_L^*(\cdot)$ given by \eqref{closed-ld} is nonanticipating. Once $u_L^*(\cdot)$ is determined in \eqref{closed-ld}, substituting 
\eqref{closed-ld} into \eqref{closed-x} yields that the resulting $\mathbf{u^*_{F_{1,2}}}(\cdot)$ in \eqref{closed-ll} is nonanticipating. 
\end{Remark}

{\color{black}{
\section{Green finance problem}\label{S5}
In this section, we employ the theoretical results obtained in Section \ref{S4} to solve the green finance problem given in Subsection \ref{moti}. For notational convenience, $x(\cdot)$ in system \eqref{m-1} is also denoted 
by $x_1(\cdot)$. Indeed, the green finance problem can be regarded as a special case of Problem (CMF-SDG) in Subsection \ref{problem-f}, where
\begin{equation}
\left\{\begin{aligned}\label{coef}
& A(\cdot, i)=a_0(\cdot, i),\  \bar A(\cdot, i)=\bar a_0(\cdot, i),\
B_L(\cdot, i)=b_l(\cdot, i), \ \mathbf{B^{\top}_F}(\cdot, i)=\mathbf{b_f^{\top}}(\cdot, i), \\
&\mathbf{b_f}(\cdot, i)=\left(b_{f,1}(\cdot, i), b_{f,2}(\cdot, i)\right), \ b(\cdot, i)=0,\
C(\cdot, i)=c_0(\cdot, i),  \\
& Q_F(\cdot, i)=q_f(\cdot, i),\ \bar C(\cdot, i)=D_L(\cdot, i)=\sigma(\cdot, i)=\bar Q_F(\cdot, i)=0, \
 \mathbf{D^{\top}_F}(\cdot, i)=\textbf{0},\\
& \mathbf{R_F}(\cdot, i)=\mathbf{r_f}(\cdot, i),\
\mathbf{r_f}(\cdot, i)=diag\left(r_{f,1}(\cdot, i), r_{f,2}(\cdot, i)\right), \
 G_F(i)=\bar G_F(i)=0,\\
&  Q_L(\cdot, i)=0,\ \bar Q_l(\cdot, i)=\bar q_l(\cdot, i),\ R_L(\cdot, i)=r_l(\cdot, i),\
 G_L(i)=\bar Q_L(i)=0,
\end{aligned}\right.
\end{equation}
$m_0=1$ in $\mathcal U_L$, and $m_j=1$ in $\mathcal U_{F,j}$ for $j=1,2$.
Applying Theorems \ref{OL} and \ref{th-F}, we obtain that the open-loop strategies of the two firms and the
government are represented by
\begin{equation}\label{uf}
\mathbf{u^*_{f_{1,2}}}(t)
=-\mathbf{r_f^{-1}}(t,\alpha_{t-})\mathbf{b^{\top}_f}(t,\alpha_{t-})y_1(t),
\end{equation}
and
\begin{equation}\label{ul}
u_l^*(t)=-r_l^{-1}(t,\alpha_{t-})b_l(t,\alpha_{t-})y_2(t), 
\end{equation}
respectively. In \eqref{uf}, $\mathbf{u^*_{f_{1,2}}}(\cdot)=\big(u^*_{f,1}(\cdot), u^*_{f,2}(\cdot)\big)^{\top}$, $y_1(\cdot)$ and $y_2(\cdot)$ are given by \eqref{p} and \eqref{2fbsde}, respectively, and they satisfy the coefficients in \eqref{coef}.

Next, we give the feedback forms for the open-loop strategies. By virtue of Theorem \ref{riccati}, we get the closed-loop representation of the open-loop strategy $\mathbf{u^*_{f_{1,2}}}(\cdot)$ for the two firms as follows
\begin{equation}\label{uf.}
\mathbf{u^*_{f_{1,2}}}(t)
=-\mathbf{r_f^{-1}}(t,\alpha_{t-})\mathbf{b^{\top}_f}(t,\alpha_{t-})
\left[P_F(t,\alpha_{t-})x_1^*(t)+\bar P_F(t,\alpha_{t-})\widehat x_1^*(t)+\phi(t)\right],
\end{equation} 
where $x_1^*(\cdot)$ is the optimal state satisfying 
\begin{equation}
\left\{\begin{aligned}
dx_1^*(t)=&\ \big[\mathbb A_1(t,\alpha_{t-})x_1^*(t)+\mathbb A_2(t,\alpha_{t-})\widehat x_1^*(t)
+b_l(t,\alpha_{t-})u_l(t)+\mathbb A_3(t,\alpha_{t-})\big]dt \\
&\ +c_0(t,\alpha_{t-})x_1^*(t)dW(t), \\
x_1^*(0)=&\ 0, \nonumber
\end{aligned}\right.
\end{equation}
$\mathbb A_1=a_0-\mathbf{b_fr_f^{-1}b^{\top}_f}P_F$, 
$\mathbb A_2=\bar a_0-\mathbf{b_fr_f^{-1}b^{\top}_f}\bar P_F$,
$\mathbb A_3=-\mathbf{b_fr_f^{-1}b^{\top}_f}\phi$, 
$P_F(\cdot,i)$, $\bar P_F(\cdot,i)$, and $\phi(\cdot)$ are given by \eqref{eq1}-\eqref{eq3}, and all the coefficients satisfy \eqref{coef}. 
Besides, it follows from Theorem \ref{th3.5} that the closed-loop representation of the strategy for the government is
\begin{equation}\label{ul.}
u_l^*(t)=-r_l^{-1}(t,\alpha_{t-})\mathbbm{b^{\top}}(t,\alpha_{t-})
\left[P_L(t,\alpha_{t-})\mathcal X^*(t)+\bar P_L (t,\alpha_{t-})\widehat{\mathcal X}^*(t)+\tau(t)\right],
\end{equation}
%
where $\mathcal X^*(\cdot)$ satisfies
\begin{equation}
\left\{\begin{aligned}
d\mathcal X^*(t)=&\ \Big\{\left[\mathbbm{a}_0(t,\alpha_{t-})+\mathbbm{b}_1(t,\alpha_{t-})
P_L(t,\alpha_{t-})\right]\mathcal X^*(t) \\
&\ +\left[\bar{\mathbbm{a}}_0(t,\alpha_{t-})+\mathbbm{b}_1(t,\alpha_{t-})
\bar P_L(t,\alpha_{t-})\right]\widehat {\mathcal X}^*(t)\\
&\ +\mathbbm{b}_1(t,\alpha_{t-})\tau(t)\Big\}dt
+\mathbbm{c}_0(t,\alpha_{t-})\mathcal X^*(t)dW(t), \\
\mathcal X^*(0)=&\ \mathcal X_0,\nonumber
\end{aligned}\right.
\end{equation}
$P_L(\cdot,i), \bar P_L(\cdot,i), \tau(\cdot)$ are given by \eqref{PL-1}-\eqref{PL-3} with the coefficients in \eqref{coef}, respectively, $i\in\mathcal M$, and
\begin{equation}
\begin{gathered}
\mathbbm{v}=\begin{pmatrix}
 v & 0\\
0 & v\\
\end{pmatrix} \ \text{with} \ \mathbbm{v}=\mathbbm{a}_0, \bar{\mathbbm{a}}_0, \mathbbm{c}_0,\quad
\mathbbm{b}_1=\begin{pmatrix}
-r_l^{-1}b_l^2  & -\mathbf{b_f}\mathbf{r_f^{-1}}\mathbf{b_f^{\top}} \\
\mathbf{b_f}\mathbf{r_f^{-1}}\mathbf{b_f^{\top}}  & 0  \\
\end{pmatrix}.
\end{gathered}\nonumber
\end{equation}
According to Remark \ref{yuqi}, $\mathbf{u^*_{f_{1,2}}}(\cdot)$ in \eqref{uf.} can also be represented in a nonanticipating way.


It is remarkable that the open-loop strategies \eqref{uf}-\eqref{ul} are expressed in terms of the solutions to the coupled CMF-FBSDEs \eqref{p} and \eqref{2fbsde}, respectively. However, solving such coupled CMF-FBSDEs directly is technically challenging. To decouple CMF-FBSDEs \eqref{p} and \eqref{2fbsde}, we introduce equations \eqref{eq1}-\eqref{eq3} and \eqref{PL-1}-\eqref{PL-3}. The numerical solutions of Riccati equations and BSDEs \eqref{eq1}-\eqref{eq3} and \eqref{PL-1}-\eqref{PL-3} can be obtained by Euler's method and Monte Carlo approach. With relations \eqref{pq-x} and \eqref{YZ-X}, the numerical solutions for \eqref{p} and \eqref{2fbsde} can also be obtained. Thus, based on \eqref{pq-x} and \eqref{YZ-X}, the closed-loop representations \eqref{uf.}-\eqref{ul.} are consistent with the open-loop strategies. Moreover, in this sense, our work also provides an approach for solving the coupled CMF-FBSDEs.

In the following, we solve the green finance problem numerically with certain coefficients as follows. Let the generator of the Markov chain 
$\mathcal M_0=\{1,2\}$ be
$$
Q=\left(\begin{array}{ll}
-0.1 & 0.1\\
0.2 & -0.2
\end{array}\right), $$
and the initial state $x_0=2$. $``1"$ and $``2"$ in $\mathcal M_0=\{1,2\}$ represent the bull and bear markets, respectively. For illustration purpose, let $T=1$, and the coefficients of the dynamic equation \eqref{m-1} are given as follows:
\begin{equation} 
\begin{aligned}
&a_0(\cdot,1)=0.5, \ \bar a_0(\cdot,1)=0.4, \ b_{f,1}(\cdot,1)=-0.4, \ b_{f,2}(\cdot,1)=0.2,\  b_l(\cdot,1)=-0.5, \\
&a_0(\cdot,2)=0.1, \ \bar a_0(\cdot,2)=0.2, \  b_{f,1}(\cdot,2)=-0.3, \ b_{f,2}(\cdot,2)=0.1,\ b_l(\cdot,2)=-0.3, \\
& c_0(\cdot,1)=0.5, \ c_0(\cdot,2)= 0.2. \nonumber
\end{aligned}
\end{equation}
Consider the cost functionals defined by \eqref{cost-fm}-\eqref{cost-lm} with
\begin{equation} 
\begin{aligned}
& q_f(\cdot,1)=0.1,\ r_{f,1}(\cdot,1)=3.4, \ r_{f,2}(\cdot,1)=-3.4, \ \bar q_l(1)=1, \  r_l(\cdot,1)=5, \\
& q_f(\cdot,2)=0.2,\ r_{f,1}(\cdot,2)=3.6, \ r_{f,2}(\cdot,2)=-3.8,  \ \bar q_l(2)=0.9, \ r_l(\cdot,2)=1. \nonumber
\end{aligned}
\end{equation}
For the sake of clarity, in what follows, we will use $h(\cdot,1)$ and $h(\cdot,2)$ to represent the values of $h(\cdot)$ at state 1 and state 2, respectively, where $h=u_{f,1}^*, u_{f,2}^*, u_l^*$.

Employing Euler's method, we obtain a sample path of Markov chain $\alpha_{\cdot}$ in Fig. \ref{alpha}, which shows that Markov chain $\alpha_{\cdot}$ switches between state 1 and state 2. By Euler-Maruyama and Monte Carlo approaches, we plot the curves of the optimal strategies $u^*_{f,1}(\cdot), u^*_{f,2}(\cdot)$ and $u_l^*(\cdot)$ given by \eqref{uf.} and \eqref{ul.}, shown in Fig.~\ref{uf12} and Fig.~\ref{ul-x}(a). In Fig.~\ref{uf12}(a), the red dotted line is the graph of $u^*_{f,1}(\cdot,1)$, the blue dotted line is the graph of $u^*_{f,1}(\cdot,2)$, and the solid red and blue line is the graph of $u^*_{f,1}(\cdot)$, whose values switch between the red dotted line $u^*_{f,1}(\cdot,1)$ and the blue dotted line $u^*_{f,1}(\cdot,2)$. Similarly, in Fig.~\ref{uf12}(b), 
the values of $u^*_{f,2}(\cdot)$ also switch between the two dotted lines. From Fig.~\ref{uf12}, one sees that the two firms demonstrate greater investment intensity during state 1 (bull market) relative to state 2 (bear market). Besides, irrespective of the market states, the advantaged Firm 2 consistently exhibits a higher level of investment intensity than the lagging Firm 1. In contrast to the bull market, the competition between the two firms becomes more intense in the bear market. As a result, the government increases its order-maintenance strategy in the bear market to preserve fair competition and social stability, shown in Fig.~\ref{ul-x}(a). In addition, the curve of subsidy share difference $x(\cdot)$ is plotted in Fig.~\ref{ul-x}(b). It can be seen from Fig.~\ref{ul-x}(b) that, the subsidy share disparity expands and exhibits higher volatility during state 1 (bull market), yet contracts and becomes less volatile during state 2 (bear market).

Finally, we conduct robustness analyses with respect to $b_l(\cdot,i), b_{f,1}(\cdot,i), b_{f,2}(\cdot,i)$ and $r_l(\cdot,i)$ given by \eqref{m-1} and \eqref{cost-lm}, $i\in\mathcal M_0$. Here $b_l(\cdot,i), b_{f,1}(\cdot,i), b_{f,2}(\cdot,i)$ $(i\in\mathcal M_0)$ are the coefficients of the government intervention strategy, the green investment strategies for the lagging Firm 1 and the advantaged Firm 2, respectively. And $r_l$ is the weight of the term $u_l^2$ in \eqref{cost-lm} and captures the social cost incurred by frequent policy changes.

\begin{itemize}
\item
Take $b_l(t,1)=-0.5, b_l(t,2)=-0.3$ and $b_l(t,1)=-0.1, b_l(t,2)=-0.05$ to illustrate the influence of different $b_l(\cdot,i)$ $(i\in\mathcal M_0)$ values on $x(\cdot)$, as shown in Fig.~\ref{x-r12}. A decrease in $|b_l(\cdot,i)|$ $(i\in\mathcal M_0)$ means that the effectiveness of government intervention weakens, causing the subsidy gap to shift from a ``controllable convergent state'' (see Fig.~\ref{x-r12}(a)) to an ``uncontrollable divergent state'' (see Fig.~\ref{x-r12}(b)).

\item
Take $b_{f,1}(t,1)=-0.4, b_{f,1}(t,2)=-0.3$ and $b_{f,1}(t,1)=-0.1, b_{f,1}(t,2)=-0.01$ to demonstrate the impact of varying $b_{f,1}(\cdot,i)$ $(i\in\mathcal M_0)$ on $x(\cdot)$, shown in Fig.~\ref{bf1-x}. A reduction in $|b_{f,1}(\cdot,i)|$ $(i\in\mathcal M_0)$ diminishes the negative marginal effect of Firm 1's investment on $x(\cdot)$, thereby decelerating the convergence of $x(\cdot)$ and rendering the subsidy gap less amenable to contraction (see Fig.~\ref{bf1-x}(b)).

\item
Take $b_{f,2}(t,1)=0.2, b_{f,2}(t,2)=0.1$ and $b_{f,2}(t,1)=0.02, b_{f,2}(t,2)=0.01$ to show the influence of varying $b_{f,2}(\cdot,i)$ $(i\in\mathcal M_0)$ on $x(\cdot)$, shown in Fig.~\ref{bf2-x}. As $b_{f,2}(\cdot,i)$ $(i\in\mathcal M_0)$ decreases, the scope for the advantaged Firm 2 to further widen the subsidy gap through its investment becomes constrained, which in turn leads to a deceleration in the growth rate of $x(\cdot)$ (see 
Fig.~\ref{bf2-x}(b)). Unlike Fig.~\ref{bf2-x}(a), Fig.~\ref{bf2-x}(b) exhibits a more contained subsidy gap, 
with $x(\cdot)\leq 1$ for $t\geq 0.6$.

\item
Take $r_l(t,1)=5, r_l(t,2)=1$ and $r_l(t,1)=0.5, r_l(t,2)=0.1$ to illustrate the influence of different $r_l(\cdot,i)$ $(i\in\mathcal M_0)$ on $u_l^*(\cdot)$ and $x(\cdot)$, as shown in Figs.~\ref{ul-12} and \ref{x-12}. A smaller $r_l(\cdot,i)$ leads to lower social costs and prompts the government to increase its order-maintenance strategy, which is consistent with Fig.~\ref{ul-12}(b). As a result, a higher level of the government's strategy also narrows the subsidy share difference $x(\cdot)$, as illustrated in Fig.~\ref{x-12}(b).

\end{itemize}
}}

\begin{figure}[htbp]
\centering
\vspace*{-0.1cm}
\includegraphics[height=4cm, width=8cm]{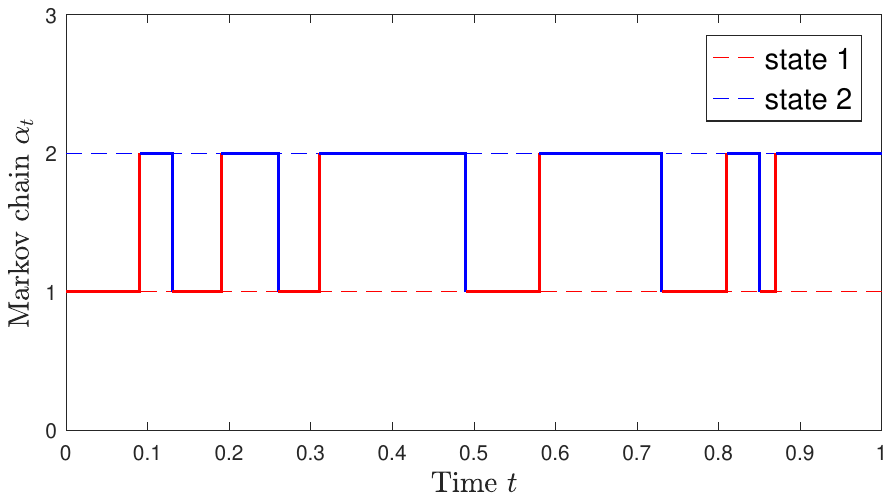}
\vspace*{-0.2cm}
\caption{{\color{black}{Numerical solution of $\alpha_t$.}} }\label{alpha}
\end{figure}

\vspace*{-0.8cm}

\begin{figure}[htbp]
\vspace*{-0.5cm}
\subfloat[A trajectory of $u^*_{f,1}(t)$.]
{
\includegraphics[height=4cm, width=6cm]{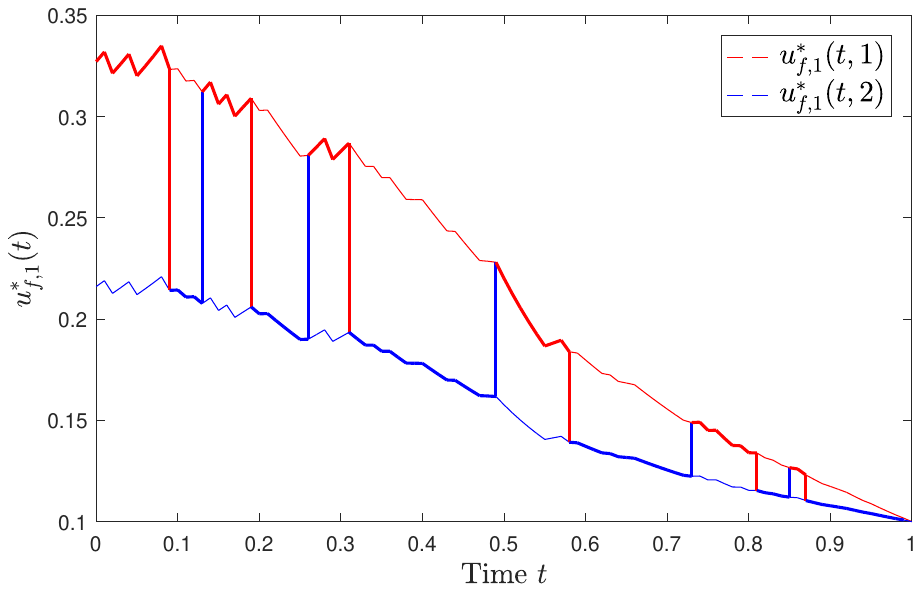}   
}
\subfloat[A trajectory of $u^*_{f,2}(t)$.]
{
\includegraphics[height=4cm, width=6cm]{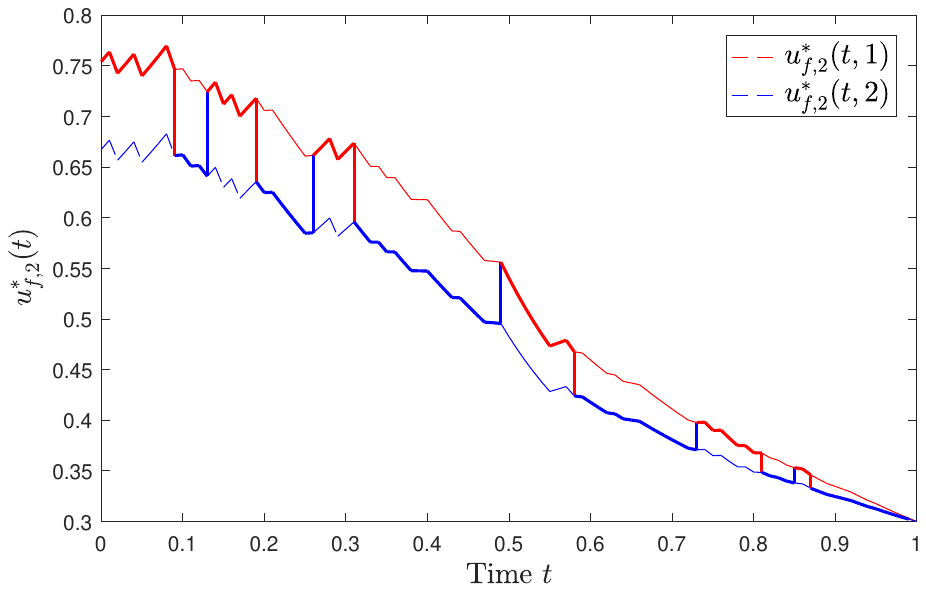}
}
\caption{{\color{black}{Numerical solutions of $u^*_{f,1}(t)$ and $u^*_{f,2}(t)$.}}}\label{uf12}
\end{figure}

\vspace*{-0.8cm}

\begin{figure}[htbp]
\vspace*{-0.5cm}
\subfloat[A trajectory of $u^*_l(t)$.]
{
\includegraphics[height=4cm, width=6cm]{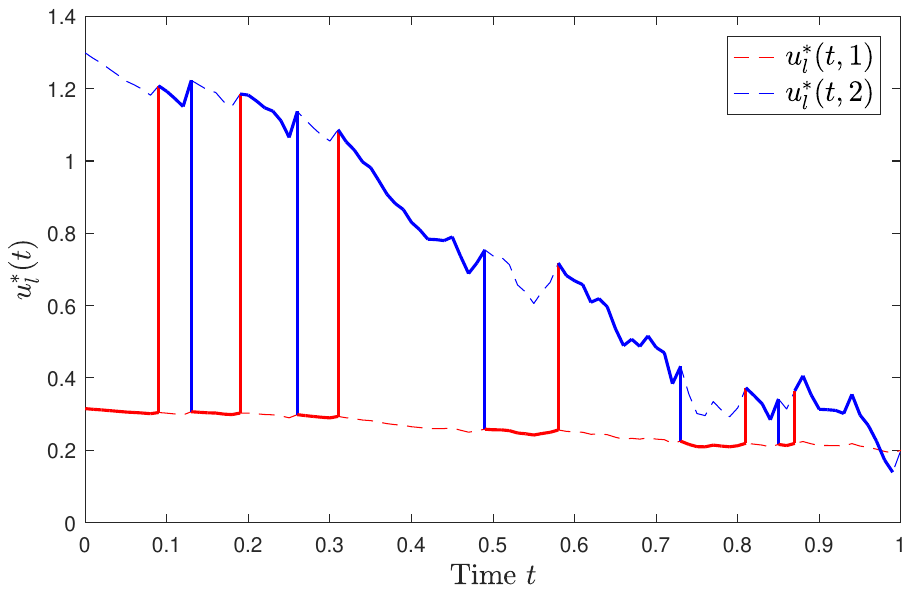}   
}
\subfloat[A trajectory of $x(t)$.]
{
\includegraphics[height=4cm, width=6cm]{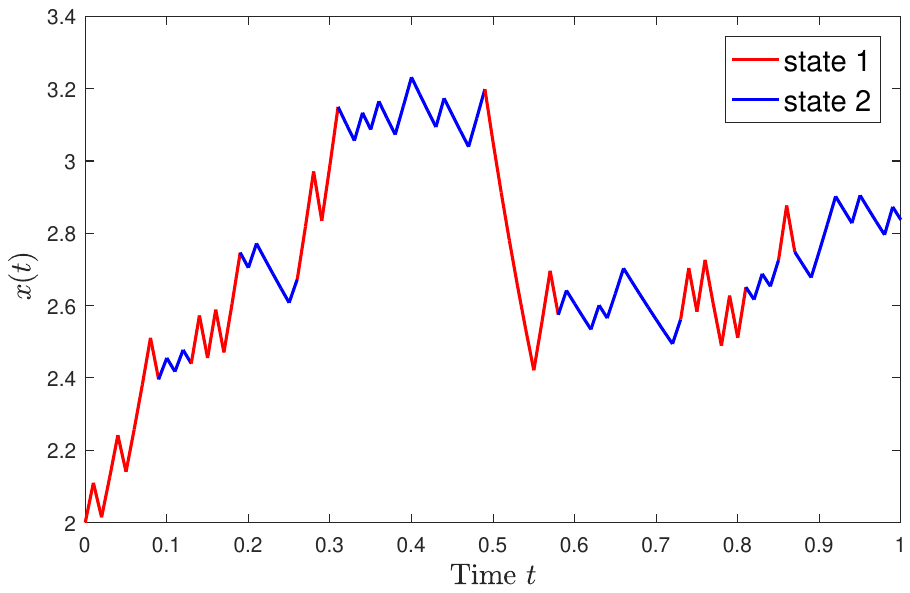}
}
\caption{{\color{black}{Numerical solutions of $u^*_l(t)$ and $x(t)$.}} }\label{ul-x}
\end{figure}

\vspace*{-0.8cm}

\begin{figure}[!h]
\vspace*{-0.5cm}
\subfloat[\centering A trajectory of $x(t)$ with\\ $b_l(t,1)=-0.5$ and $b_l(t,2)=-0.3$.]
{
\includegraphics[height=4cm, width=6cm]{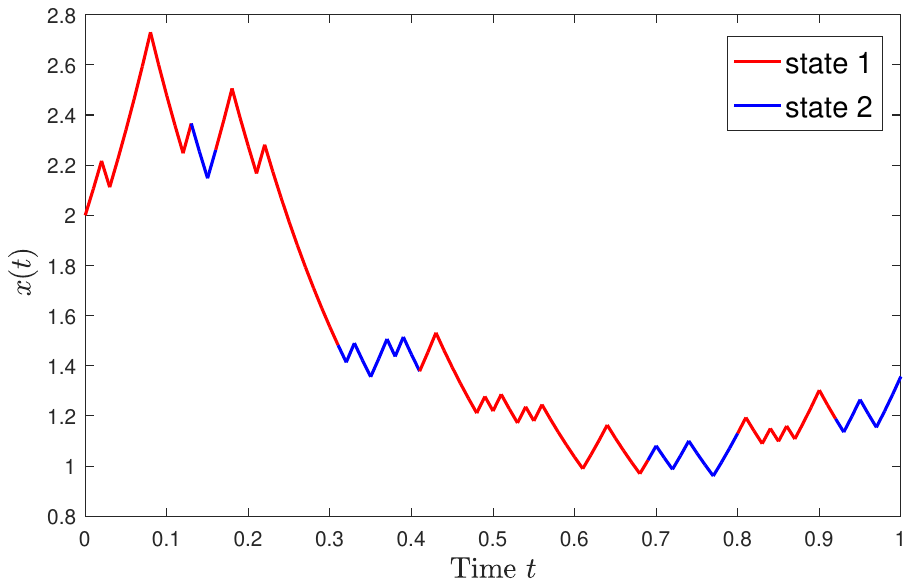}   
}
\subfloat[\centering A trajectory of $x(t)$ with\\ $b_l(t,1)=-0.1$ and $b_l(t,2)=-0.05$.]
{
\includegraphics[height=4cm, width=6cm]{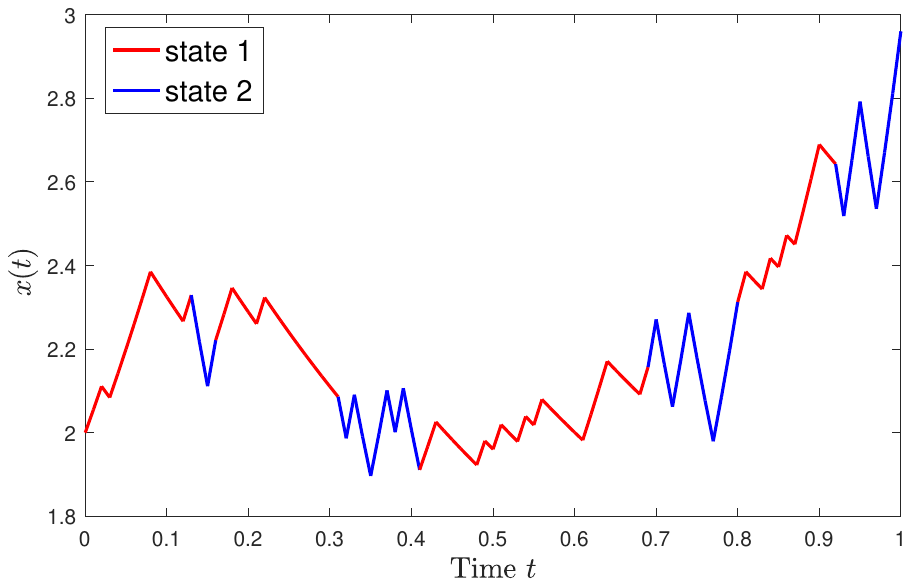}
}
\caption{{\color{black}{Numerical solutions of $x(t)$ with different $b_l(t,i)$, $i\in\mathcal M_0$.}}}\label{x-r12}
\end{figure}

\vspace*{-0.8cm}

\begin{figure}[!h]
\vspace*{-0.5cm}
\subfloat[\centering A trajectory of $x(t)$ with\\ $b_{f,1}(\cdot,1)=-0.4$ and $b_{f,1}(\cdot,2)=-0.3$.]
{
\includegraphics[height=4cm, width=6cm]{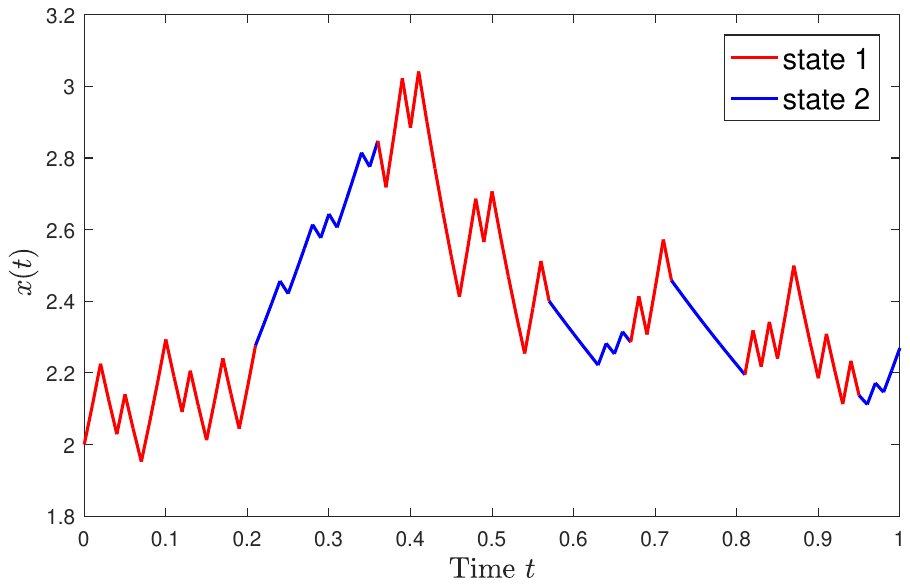}   
}
\subfloat[\centering A trajectory of $x(t)$ with\\ $b_{f,1}(\cdot,1)=-0.1$ and $b_{f,1}(\cdot,2)=-0.01$.]
{
\includegraphics[height=4cm, width=6cm]{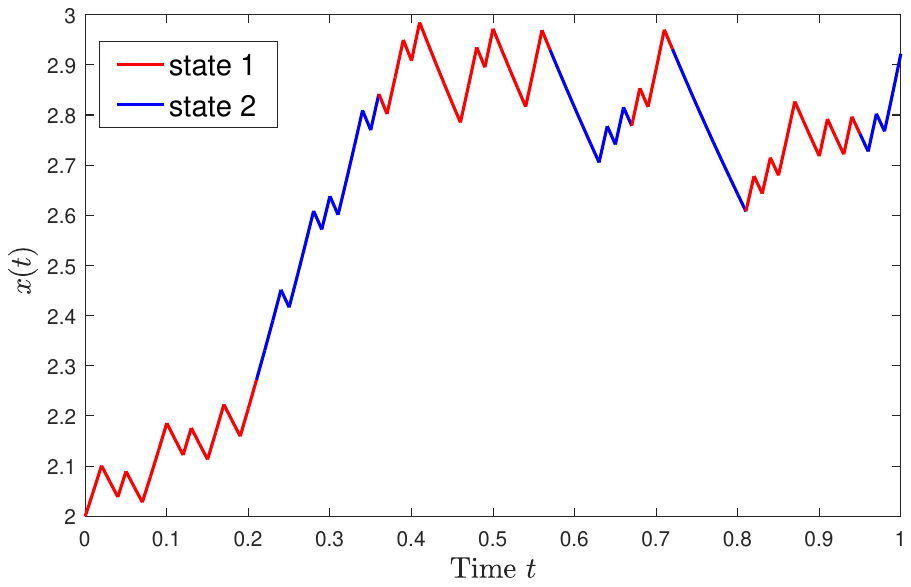}
}
\caption{{\color{black}{Numerical solutions of $x(t)$ with different $b_{f,1}(t,i)$, $i\in\mathcal M_0$.}}}\label{bf1-x}
\end{figure}

\vspace*{-0.5cm}

\begin{figure}[!h]
\vspace*{-0.5cm}
\subfloat[\centering A trajectory of $x(t)$ with\\ $b_{f,2}(\cdot,1)=0.2$ and $b_{f,2}(\cdot,2)=0.1$.]
{
\includegraphics[height=4cm, width=6cm]{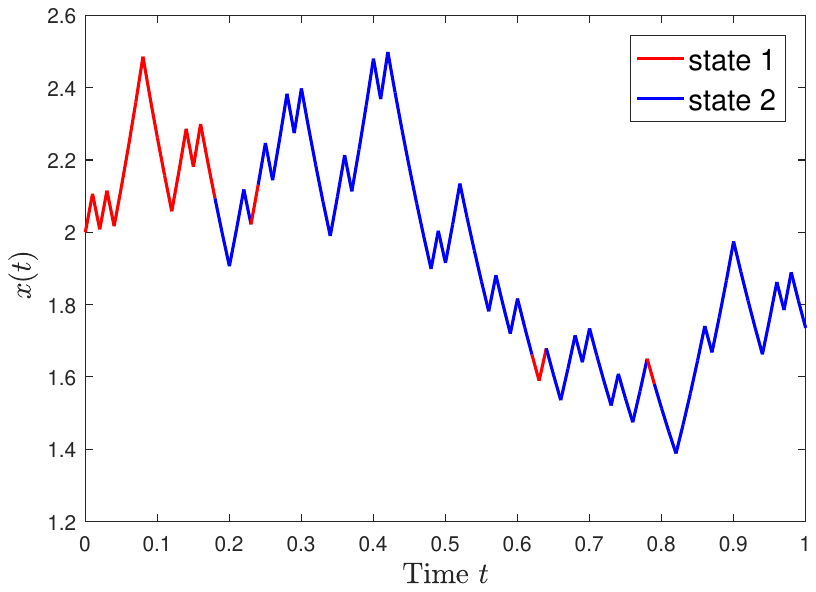}   
}
\subfloat[\centering A trajectory of $x(t)$ with\\ $b_{f,2}(\cdot,1)=0.02$ and $b_{f,2}(\cdot,2)=0.01$.]
{
\includegraphics[height=4cm, width=6cm]{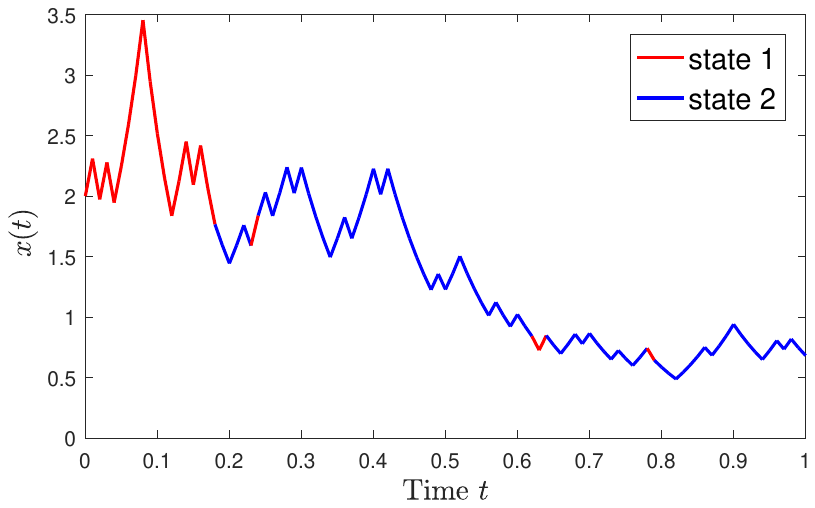}
}
\caption{{\color{black}{Numerical solutions of $x(t)$ with different $b_{f,2}(t,i)$, $i\in\mathcal M_0$.}}}\label{bf2-x}
\end{figure}

\vspace*{-0.8cm}

\begin{figure}[htbp]
\vspace*{-0.5cm}
\subfloat[\centering A trajectory of $u^*_{l}(t)$ with\\ $r_l(t,1)=5$ and $r_l(t,2)=1$.]
{
\includegraphics[height=4cm, width=6cm]{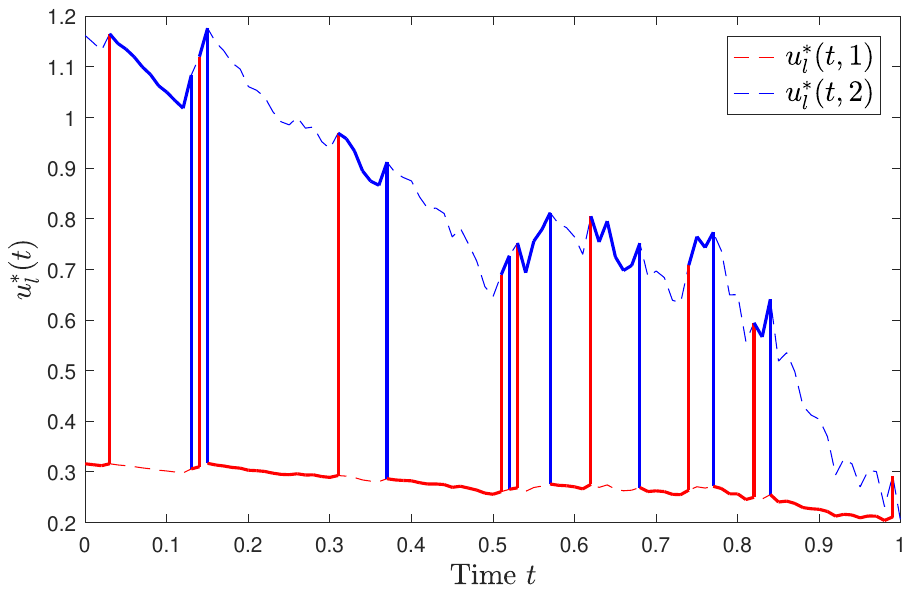}   
}
\subfloat[\centering A trajectory of $u^*_{l}(t)$ with\\ $r_l(t,1)=0.5$ and $r_l(t,2)=0.1$.]
{
\includegraphics[height=4cm, width=6cm]{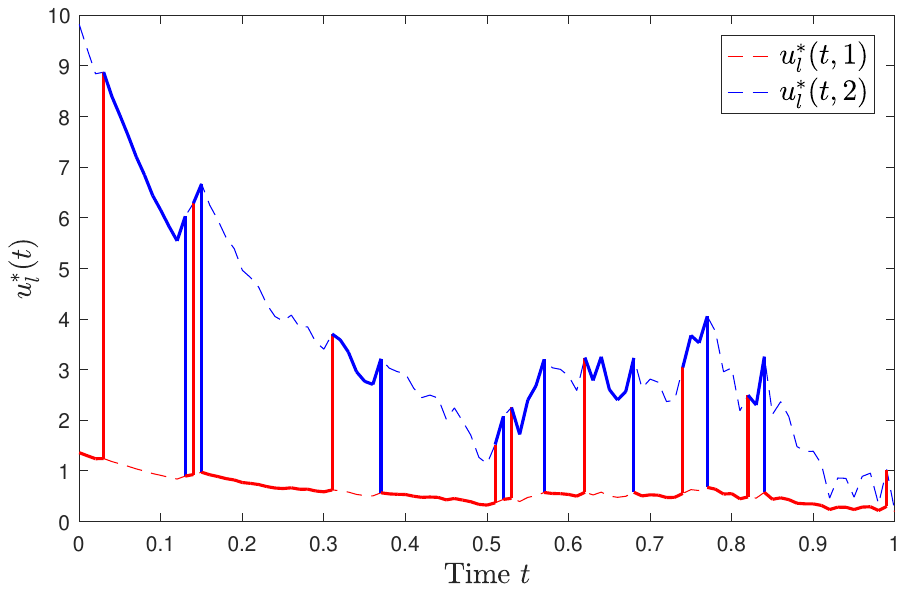}
}
\caption{{\color{black}{Numerical solutions of $u^*_{l}(t)$ with different $r_l(t,i)$, $i\in\mathcal M_0$.}} }\label{ul-12}
\end{figure}

\vspace*{-0.8cm}

\begin{figure}[!h]
\vspace*{-0.5cm}
\subfloat[\centering A trajectory of $x(t)$ with\\ $r_l(t,1)=5$ and $r_l(t,2)=1$.]
{
\includegraphics[height=4cm, width=6cm]{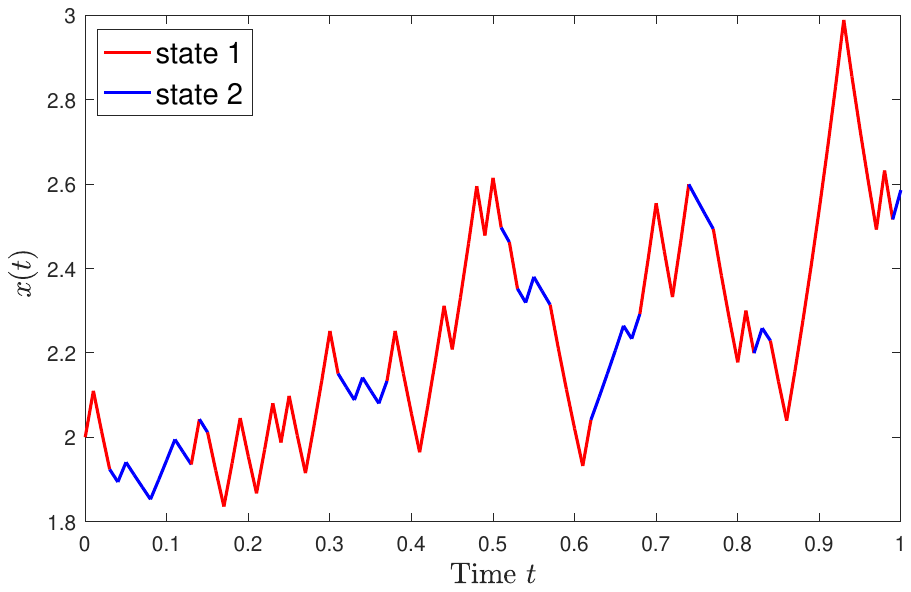}   
}
\subfloat[\centering A trajectory of $x(t)$ with\\ $r_l(t,1)=0.5$ and $r_l(t,2)=0.1$.]
{
\includegraphics[height=4cm, width=6cm]{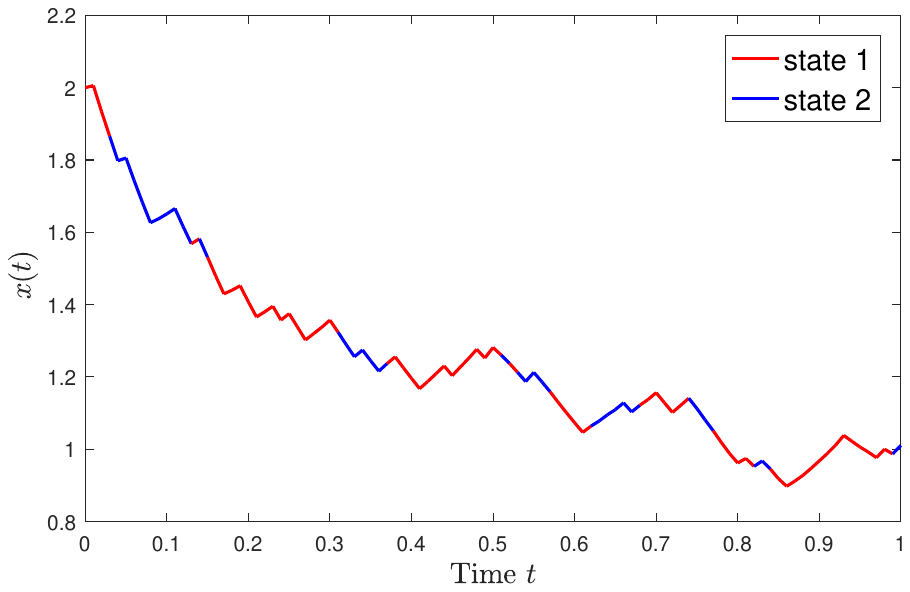}
}
\caption{{\color{black}{Numerical solutions of $x(t)$ with different $r_l(t,i)$, $i\in\mathcal M_0$.}}}\label{x-12}
\end{figure}

\vspace*{5.5cm}
\section{Conclusion and outlook}\label{S6}
This paper investigates an LQ mixed Stackelberg differential game with conditional mean-field and regime switching, where one leader and two followers involve. By domination-monotonicity conditions with conditional expectation, continuation method and induction method, we obtain the unique solvability of the CMF-FBSDEs. Further, applying stochastic maximum principle and decoupling approach, we derive the optimal strategies for the two followers and the leader.

Except for noncooperative game studied in this work, the study of cooperative game also has important research value. We will consider the cooperative game with conditional mean-field and regime switching in our future work.

\section*{Appendix A}\qquad \qquad\qquad

To prove Theorem \ref{eufbsde}, we first present a proposition.
\begin{Proposition}\label{prop1}
If $b_2(\cdot,\cdot,\cdot,\cdot)=\sigma_2(\cdot,\cdot,\cdot,\cdot)=g_2(\cdot,\cdot,\cdot,\cdot)
=\Psi_2(\cdot)=\Phi_2(\cdot,\cdot,\cdot)=0$, $x_2=y_2=z_2=\tilde z_2=0$,
Theorem \ref{eufbsde} holds true.
\end{Proposition}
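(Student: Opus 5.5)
We would follow the continuation-method scheme in the spirit of the domination--monotonicity framework, adapted to the conditional mean-field terms and the Markov-chain martingales. With $b_2=\sigma_2=g_2=\Psi_2=\Phi_2=0$, only the triple $(\Psi_1,\Phi_1,\Gamma_1)$ matters. Only the conditions \eqref{2.2} and \eqref{2.4} are needed, since \eqref{A3-1}, \eqref{2.5} and \eqref{A3-2} become trivial. The proof has three stages: an a priori estimate, which gives \eqref{5-1} and uniqueness; solvability of a simple decoupled ``base'' equation; and a continuation step of uniform length.

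\emph{A priori estimate.} Take two solutions $\pi,\pi'$ with coefficients $(\Psi,\Phi,\Gamma)$ and $(\Psi',\Phi',\Gamma')$. We apply It\^o's formula to $\langle \Delta x_1(t),\Delta y_1(t)\rangle$ on $[0,T]$. Since $x_1$ is continuous, there is no cross-jump term with $\tilde z_1\bullet dM$, and the stochastic integrals are true martingales by the integrability in $N^2_{\mathbb F}$ and $\mathcal M^2_{\mathbb F}$. Taking expectations and splitting each difference as $\Gamma(\Theta)-\Gamma(\Theta')+\Gamma(\Theta')-\Gamma'(\Theta')$, we use the tower property $\mathbb E[\cdot]=\mathbb E[\mathbb E[\cdot\mid\mathcal F^\alpha_{t-}]]$. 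This turns the conditional monotonicity \eqref{2.4} into an unconditional bound:
\[
\nu_1\mathbb E|\widetilde G_1\Delta x_1+\widetilde G_1'\Delta\widehat x_1|^2(T)+\nu_1\mathbb E\!\int_0^T\!|\widetilde A_1\Delta x_1+\widetilde A_1'\Delta\widehat x_1|^2dt+\mu_1\mathbb E\!\int_0^T\!|\widetilde B_1\Delta y_1+\cdots+\widetilde C_1'\Delta\widehat z_1|^2dt ,
\]
bounded above by cross terms in $\mathcal J$ times the $\Delta$'s.

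We then split into two cases.
\begin{itemize}
\item If $\mu_1>0$ and $\nu_1=0$: standard SDE estimates for $\Delta x_1$, using the domination \eqref{2.2} of $b_1,\sigma_1$ by the $\widetilde B_1,\dots,\widetilde C_1'$ combination and Jensen's inequality $\mathbb E|\widehat h|^2\le\mathbb E|h|^2$, bound $\mathbb E\sup|\Delta x_1|^2$. Then BSDE estimates with Markov chain (Nguyen et al.) bound $\Delta y_1,\Delta z_1,\Delta\tilde z_1$.
\item If $\mu_1=0$ and $\nu_1>0$: we proceed symmetrically, bounding the backward part through the $\widetilde G_1,\widetilde A_1$ combinations first.
\end{itemize}
In both cases, Young's inequality with a small parameter absorbs the cross terms and yields \eqref{5-1}. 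Uniqueness follows by taking identical coefficients.

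\emph{Base equation and continuation.} For $\lambda\in[0,1]$ we set
\begin{gather*}
\Psi^\lambda=\lambda\Psi_1-(1-\lambda)\mu_1\widetilde M_1^\top\widetilde M_1y_1,\qquad
\Phi^\lambda=\lambda\Phi_1+(1-\lambda)\nu_1\widetilde G_1^\top(\widetilde G_1x_1+\widetilde G_1'\widehat x_1),\\
g^\lambda=\lambda g_1-(1-\lambda)\nu_1\widetilde A_1^\top(\widetilde A_1x_1+\widetilde A_1'\widehat x_1),\qquad
(b^\lambda,\sigma^\lambda)=\lambda(b_1,\sigma_1)-(1-\lambda)\mu_1(\widetilde B_1,\widetilde C_1)^\top(\cdots),
\end{gather*}
with the conditional-expectation terms appropriately symmetrized, plus arbitrary free terms in $\mathcal H[0,T]$. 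Each family satisfies (A1) and (A2) with the same constants, so the a priori estimate holds uniformly in $\lambda$.

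At $\lambda=0$ the system decouples:
\begin{itemize}
\item If $\nu_1=0$, the backward equation has driver and terminal value independent of $x_1$. It is a linear conditional mean-field BSDE with regime switching, solved by a contraction argument (or by first solving for $\widehat y_1$ via the $\mathcal F^\alpha$-projection). The forward SDE is then solved by Lemma 2.1 of Nguyen et al.
\item If $\mu_1=0$, we solve the forward equation first and then the BSDE.
\end{itemize}
Next, suppose the equation is solvable at $\lambda_0$. For $\lambda_0+\delta$ we define a map on $\big(N^2_{\mathbb F}\times\mathcal M^2_{\mathbb F}\big)$ by freezing the extra $\delta$-terms at the input and solving at $\lambda_0$. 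The a priori estimate shows this map is a contraction for some $\delta>0$ depending only on the constants in (A1)--(A2), not on $\lambda_0$. Finitely many steps reach $\lambda=1$.

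\emph{Main obstacle.} The delicate step is the a priori estimate. The monotonicity is only assumed conditionally on $\mathcal F^\alpha_{t-}$, and the terms $\widehat\Theta$ must be controlled without destroying the sign structure. We would first verify that $\widehat{\Delta h}=\Delta\widehat h$, and that $\mathbb E\langle f(\widehat h),h\rangle=\mathbb E\langle f(\widehat h),\widehat h\rangle$ for $\mathcal F^\alpha_{t-}$-measurable $f$, using independence of $W$ and $\alpha$. This should ensure that the conditional inequalities integrate cleanly, after which the remaining estimates are routine Gronwall-type arguments.
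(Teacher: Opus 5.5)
Your proposal is correct and follows essentially the route the paper takes when it cites Peng--Wu and Yong: an a priori estimate from It\^o's formula on $\langle\Delta x_1,\Delta y_1\rangle$ combined with the (conditional) monotonicity, a decoupled base equation, and a contraction step whose length does not depend on the starting parameter. Two small corrections: the monotonicity of $\Psi_1$ also puts $\mu_1|\widetilde M_1\Delta y_1(0)|^2$ on the left-hand side of your key inequality, and you need that term to control $\Delta x_1(0)$ when $\mu_1>0$; and the interpolated coefficients do not satisfy the domination conditions with the same constants, but only with uniformly bounded ones (and, for the symmetrized conditional-expectation terms, only in mean square via Jensen's inequality), which is still enough for a $\lambda$-independent estimate.
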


\emph{Proof of Proposition \ref{prop1}:}  By virtue of Theorem 3.1 of Peng and Wu \cite{PW15} and Lemma 3.1 of Yong \cite{Yong21}, the result can be derived. We omit the detailed proof. \hfill$\square$

For any $i\in\mathcal M$, let
\begin{equation*} 
\left\{\begin{aligned}
& \mathcal L_1(\cdot,i)=\big(\sqrt{\nu_1}\widetilde A_1(\cdot,i), \sqrt{\mu_1}\widetilde B_1(\cdot,i), \sqrt{\mu_1}\widetilde C_1(\cdot,i)\big)^{\top} \\
&\qquad\qquad\times\big(\sqrt{\nu_1}\widetilde A_1(\cdot,i), \sqrt{\mu_1}\widetilde B_1(\cdot,i), \sqrt{\mu_1}\widetilde C_1(\cdot,i)\big),\\
& \mathcal L_2(\cdot,i)=\mu_2\Lambda \big({\widetilde L_2(\cdot,i)}\big)^{\top}\widetilde L_2(\cdot,i),\quad 
\mathcal M_1=\mu_1{\widetilde M_1}^{\top}\widetilde M_1, \\
& \mathcal M_2=\mu_2\Lambda{\widetilde M_2}^{\top}\widetilde M_2, \quad  
 \mathcal G_1(i)=\nu_1{\widetilde G_1^{\top}(i)}\widetilde G_1(i),  \quad 
\mathcal G_2(i)=\mu_2\Lambda{\widetilde G_2}^{\top}(i)\widetilde G_2(i), 
\end{aligned}\right.
\end{equation*}
\[\mathbb L_1(\cdot,i)=
\left(\begin{array}{cc}
-\mathcal L_1 & \mathbf{0} \\
\mathbf{0} & \mathcal L_1 \\
\end{array}\right),\
\mathbb M_1=
\left(\begin{array}{cc}
-\mathcal M_1 & \mathbf{0} \\
\mathbf{0} & \mathcal M_1 \\
\end{array}\right),\
\mathbb G_1(i)=
\left(\begin{array}{cc}
\mathcal G_1 & \mathbf{0} \\
\mathbf{0} & -\mathcal G_1 \\
\end{array}\right),\]
\[\mathbb L_2(\cdot,i)=
\left(\begin{array}{cc}
\mathbf{0} & -\mathcal L_2\\
\mathbf{0} & \mathbf{0}\\
\end{array}\right),\
\mathbb M_2=
\left(\begin{array}{cc}
\mathbf{0} & -\mathcal M_2 \\
\mathbf{0} & \mathbf{0} \\
\end{array}\right),\
\mathbb G_2(i)=
\left(\begin{array}{cc}
\mathbf{0} & \mathcal G_2\\
\mathbf{0} & \mathbf{0}\\
\end{array}\right), \]
and denote $\mathcal G_1'(i)=\nu_1\big(\widetilde G_1'(i)\big)^{\top}\widetilde G_1'(i)$. Similar inventions are used for $\mathcal G_2'(i)$, $\mathcal L_j'(\cdot,i)$,\\ $\mathbb G_j'(i), \mathbb L_j'(\cdot,i)$,
$j=1,2$. Let
\begin{equation}\left\{\begin{aligned}\label{PPG}
& \Psi^0({\color{black}{\textbf{y}}})=\sum_{j=1}^2\mathbb M_j {\color{black}{\textbf{y}}},\quad
\Phi^0\left({\color{black}{\textbf{x}}},\widehat {\color{black}{\textbf{x}}}, i\right)
=\sum_{j=1}^2\mathbb G_j(i){\color{black}{\textbf{x}}}
+\sum_{j=1}^2\mathbb G_j'(i)\widehat{\color{black}{{\textbf{x}}}},\\
& \Gamma^0\big(\cdot,\Theta,\widehat\Theta,i\big)= \sum_{j=1}^2\mathbb L_j(\cdot,i)\Theta
+\sum_{j=1}^2\mathbb L_j'(\cdot,i)\widehat\Theta.
\end{aligned}\right.\end{equation}
For any $\left(\kappa,\xi,\rho(\cdot)\right)\in\mathcal H[0,T]$ with $\rho(\cdot)=\left(\varphi^{\top}(\cdot),\eta^{\top}(\cdot), \gamma^{\top}(\cdot)\right)^{\top}$, we introduce a family of CMF-FBSDEs parameterized by $\delta\in[0,1]$ as follows 
\begin{equation}\label{FBSDE8.1}
\left\{\begin{aligned}
dx_j^{\delta}(t)=& \left[b_j^{\delta}\left(\pmb{\Theta}^{\delta}_t\right)
+\eta_j(t)\right]dt+\left[\sigma_j^{\delta}\left(\pmb{\Theta}^{\delta}_t\right)
+\gamma_j(t)\right]dW(t),\\
dy_j^{\delta}(t)=& \left[g_j^{\delta}\left(\pmb{\Theta}^{\delta}_t\right)
+\varphi_j(t)\right]dt+z_j^{\delta}(t)dW(t)+\tilde z_j^{\delta}(t)\bullet dM(t),\\
x_j^{\delta}(0)=&\ \Psi_j^{\delta}\left({\color{black}{\textbf{y}}}^{\delta}(0)\right)+\kappa_j, \quad
y_j^{\delta}(T)=\Phi_j^{\delta}\left({\color{black}{\textbf{x}}}^{\delta}(T), \widehat{\color{black}{{\textbf{x}}}}^{\delta}(T),\alpha_{T}\right)+\xi_j, \quad j=1,2,
\end{aligned}\right.
\end{equation}
where $\pmb{\Theta}^{\delta}_t=\left(t,\Theta^{\delta}(t),{\widehat\Theta}^{\delta}(t),
\alpha_{t-}\right)$ and 
\begin{equation}\label{alpha0}
\left(\Psi^{\delta}, \Phi^{\delta}, \Gamma^{\delta}\right)
=(1-\delta)\left(\Psi^0, \Phi^0, \Gamma^0\right)+\delta\left(\Psi, \Phi, \Gamma\right). 
\end{equation}
Besides, for any $\left(\kappa', \xi',\rho'(\cdot)\right)\in\mathcal H[0,T]$, \eqref{FBSDE8.1} corresponds to another solution.

{\color{black}{
\begin{Lemma}\label{le3.2}
Let $\left(\kappa,\xi,\rho(\cdot)\right)\in\mathcal H[0,T]$ be given. When $\delta=0$, \eqref{FBSDE8.1} is decoupled and is uniquely solvable.
\end{Lemma}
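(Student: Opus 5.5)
The plan is to exploit the block structure of $\left(\Psi^0,\Phi^0,\Gamma^0\right)$ in \eqref{PPG}. The aim is to show that, at $\delta=0$, \eqref{FBSDE8.1} becomes a finite cascade of decoupled linear equations. Each equation in the cascade is either a forward conditional mean-field SDE with regime switching or a backward one. Each can then be solved uniquely in turn, with the data of later equations depending only on processes already determined.

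First I would write out \eqref{FBSDE8.1} at $\delta=0$. The initial conditions are
\[
x_1(0)=-\mathcal M_1y_1(0)-\mathcal M_2y_2(0)+\kappa_1,\qquad x_2(0)=\mathcal M_1y_2(0)+\kappa_2 .
\]
The terminal conditions are $y_1(T)=\mathcal G_1x_1(T)+\mathcal G_1'\widehat x_1(T)+\mathcal G_2x_2(T)+\mathcal G_2'\widehat x_2(T)+\xi_1$ and $y_2(T)=-\mathcal G_1x_2(T)-\mathcal G_1'\widehat x_2(T)+\xi_2$. The coefficients are $\Gamma_1=-\mathcal L_1\Theta_1-\mathcal L_1'\widehat\Theta_1-\mathcal L_2\Theta_2-\mathcal L_2'\widehat\Theta_2+\rho_1$ and $\Gamma_2=\mathcal L_1\Theta_2+\mathcal L_1'\widehat\Theta_2+\rho_2$. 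The block $(x_2,y_2,z_2,\tilde z_2)$ therefore does not involve index $1$ at all, so it is solved first. The index-$1$ block, with $\Theta_2$ frozen, is solved second.

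The key point is the dichotomy in (A2): either $\mu_1>0,\nu_1=0$ or $\mu_1=0,\nu_1>0$.

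\emph{Case $\nu_1=0$.} Then $\mathcal G_1=\mathcal G_1'=0$. Moreover, $\mathcal L_1$ and $\mathcal L_1'$ only act on the $(y,z)$ components and only produce entries in the $(b,\sigma)$ rows, with the $g$-rows vanishing. Hence $y_2$ solves a BSDE with driver $\varphi_2$ and terminal value $\xi_2$, which is explicitly solvable by martingale representation with respect to $(W,M)$. Next, $x_2$ is given by a forward SDE whose coefficients are known, with initial value $\mathcal M_1y_2(0)+\kappa_2$. For index $1$, $y_1$ is a BSDE whose driver involves only $\Theta_2$ (through the $g$-rows of $\mathcal L_2$) and whose terminal value involves only $x_2(T),\widehat x_2(T)$. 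Finally, $x_1$ is forward, with coefficients depending on $(y_1,z_1,\widehat y_1,\widehat z_1)$ and on $\Theta_2$.

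\emph{Case $\mu_1=0$.} Then $\mathcal M_1=0$, and $\mathcal L_1,\mathcal L_1'$ only act on $x$ and only feed the $g$-rows. So $x_2$ is an explicit forward process with initial value $\kappa_2$. Then $y_2$ solves a linear conditional mean-field BSDE whose driver is affine in the known $(x_2,\widehat x_2)$. Then $x_1$ is forward with inhomogeneous terms built from $\Theta_2$ and $y_2(0)$. Finally, $y_1$ is a BSDE with a known driver and known terminal value.

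In each step, the equation is either an SDE or a BSDE with driver linear in its own unknown and its conditional expectation given $\mathcal F^{\alpha}_{t-}$, or it has purely exogenous coefficients. For these I will invoke Lemma 2.1 of Nguyen et al.\ \cite{Yin2020} in the forward case. In the backward case I will use the standard existence and uniqueness for BSDEs driven by $W$ and the Markov-chain martingales $M_{ij}$, extended to conditional mean-field terms. That extension is a Picard contraction, using $\mathbb E|\widehat h(t)|^2\le\mathbb E|h(t)|^2$, and is the special case of Proposition~\ref{prop1}.

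The step I expect to need the most care is checking that all inhomogeneous data produced at each stage lie in the right spaces. These data include $\kappa$, $\xi$, $\rho\in\mathcal N^2_{\mathbb F}$, together with terms such as $\mathcal L_2\Theta_2$ and $\mathcal G_2x_2(T)$. The required spaces are $L^2_{\mathcal F_T}$, $L^2_{\mathbb F}(\Omega;L(0,T))$, and $L^2_{\mathbb F}(0,T)$ for the diffusion part. This membership, together with the boundedness of $\widetilde L_2,\widetilde G_2,\widetilde M_2$, gives the a priori estimates in the spaces of Theorem~\ref{eufbsde}. Uniqueness then follows by running the same cascade on the difference of two solutions, where each stage forces the corresponding difference to vanish.
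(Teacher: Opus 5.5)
Your proposal is correct and follows essentially the same route as the paper. The paper also notes that at $\delta=0$ the index-$2$ block is self-contained, and it splits on the dichotomy in (A2). If $\nu_1=0$, then $g_2^0=0$ and $\mathcal G_1=\mathcal G_1'=0$, so the backward equation is solved first and the forward one second; if $\mu_1=0$, then $b_2^0=\sigma_2^0=0$ and $\mathcal M_1=0$, so the order is reversed. You go a bit further than the paper by writing out the index-$1$ cascade with $\Theta_2$ frozen, checking that the data lie in the right spaces, and deriving uniqueness from the cascade; the paper leaves all three implicit.
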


\emph{Proof of Lemma \ref{le3.2}:}
Let $\delta=0$, for any $i\in\mathcal M$, it follows from \eqref{PPG} that
\[\Psi^0({\color{black}{\textbf{y}}})=
\left(\begin{array}{c}
-\sum_{j=1}^2\mathcal M_jy_j  \\
\mathcal M_1y_2 \\
\end{array}\right),\quad
\Phi^0({\color{black}{\textbf{x}}},\widehat{\color{black}{{\textbf{x}}}},i)=
\left(\begin{array}{c}
\sum_{j=1}^2\left[\mathcal G_j(i)x_j
+\mathcal G_j'(i)\widehat x_j\right]  \\
-\mathcal G_1(i)x_2-\mathcal G_1'(i)\widehat x_2\\
\end{array}\right),\quad \]
\[\Gamma^0\left(\cdot,\Theta, \widehat\Theta,i\right)=
\left(\begin{array}{c}
-\sum_{j=1}^2\left[\mathcal L_j(\cdot,i)\Theta_j+\mathcal L_j'(\cdot,i)\widehat\Theta_j\right]  \\
\mathcal L_1(\cdot,i)\Theta_2+\mathcal L_1'(\cdot,i)\widehat\Theta_2 \\
\end{array}\right).\quad \]
Then, we have
\begin{equation}\label{xyz0}
\left\{\begin{aligned}
dx_2^{0}(t)=& \left[b_2^{0}\left(\pmb{\Theta}^{0}_{2,t}\right)+\eta_2(t)\right]dt
+\left[\sigma_2^{0}\left(\pmb{\Theta}^{0}_{2,t}\right)+\gamma_2(t)\right]dW(t),\\
dy_2^{0}(t)=& \left[g_2^{0}\left(\pmb{\Theta}^{0}_{2,t}\right)
 +\varphi_2(t)\right]dt+z_2^0(t)dW(t)+\tilde z_2^0(t)\bullet dM(t),\\
x_2^{0}(0)=&\ \mathcal M_1y^{0}_2(0)+\kappa_2, \quad
y_2^{0}(T)=-\mathcal G_1(\alpha_{T})x^{0}_2(T)-\mathcal G_1'(\alpha_{T})\widehat x^0_2(T)+\xi_2,
\end{aligned}\right.
\end{equation}
where $\pmb{\Theta}^{0}_{2,t}=\left(t,\Theta_2^{0}(t),\widehat\Theta^0_2(t),\alpha_{t-}\right)$,
$\Theta_2^0(t)=\left((x_2^0(t))^{\top}, ({y_2^0}(t))^{\top},({z_2^0}(t))^{\top}\right)^{\top}$, \\
$\widehat\Theta_2^0(t)=\left((\widehat x_2^0(t))^{\top},(\widehat y_2^0(t))^{\top},
(\widehat z_2^0(t))^{\top}\right)^{\top}$, and
\begin{equation}
\left\{\begin{aligned}
&b_2^0=\sqrt{\mu_1\nu_1}\widetilde B_1^{\top}\widetilde A_1x_2^0+\mu_1\widetilde B_1^{\top}\widetilde B_1y_2^0
+\mu_1\widetilde B_1^{\top}\widetilde C_1z_2^0+\sqrt{\mu_1\nu_1}\left(\widetilde B_1'\right)^{\top}\widetilde A_1'\widehat x_2^0 \\
&\qquad\  +\mu_1\left(\widetilde B_1'\right)^{\top}\widetilde B_1'\widehat y_2^0
+\mu_1\left(\widetilde B_1'\right)^{\top}\widetilde C_1'\widehat z_2^0,\\
&\sigma_2^0=\sqrt{\mu_1\nu_1}\widetilde C_1^{\top}\widetilde A_1x_2^0+\mu_1\widetilde C_1^{\top}\widetilde B_1y_2^0
+\mu_1\widetilde C_1^{\top}\widetilde C_1z_2^0+\sqrt{\mu_1\nu_1}\left(\widetilde C_1'\right)^{\top}
\widetilde A_1'\widehat x_2^0 \\
&\qquad\ +\mu_1\left(\widetilde C_1'\right)^{\top}\widetilde B_1'\widehat y_2^0
+\mu_1\left(\widetilde C_1'\right)^{\top}\widetilde C_1'\widehat z_2^0, \\
&g_2^0=\nu_1\widetilde A_1^{\top}\widetilde A_1x_2^0+\sqrt{\mu_1\nu_1}\widetilde A_1^{\top}\widetilde B_1y_2^0
+\sqrt{\mu_1\nu_1}\widetilde A_1^{\top}\widetilde C_1z_2^0+\nu_1\left(\widetilde A_1'\right)^{\top}\widetilde A_1'\widehat x_2^0 \\
&\qquad\ +\sqrt{\mu_1\nu_1}\left(\widetilde A_1'\right)^{\top}\widetilde B_1'\widehat y_2^0 
 +\sqrt{\mu_1\nu_1}\left(\widetilde A_1'\right)^{\top}\widetilde C_1'\widehat z_2^0. \nonumber
\end{aligned}\right.
\end{equation}
Note that $b_2^0,\sigma_2^0,g_2^0$ depend on $\mu_1,\nu_1$. Recall that there are two cases about $\mu_1, \nu_1$ in Assumption (A2). If $\nu_1=0$, then $g_2^0=0$, $\mathcal G_1=\nu_1{\widetilde G_1}^{\top}\widetilde G_1=0$, $\mathcal G_1'=\nu_1\left(\widetilde G_1'\right)^{\top}\widetilde G_1'=0$. In this case, \eqref{xyz0} is a decoupled CMF-FBSDE, where the backward equation does not depend on the forward one. So we can first solve $\left(y_2^0(\cdot),z_2^0(\cdot),\tilde z_2^0(\cdot)\right)$ from the second equation of \eqref{xyz0} with the terminal condition, and then solve $x_2^0(\cdot)$ from the first equation. On the other hand, if $\mu_1=0$, then $b_2^0=\sigma_2^0=0$, and $\mathcal M_1=\mu_1{\widetilde M_1}^{\top}\widetilde M_1=0$. In this case, we can first solve the forward equation and subsequently handle the backward equation.
The proof is completed.  \hfill$\square$ }}

According to Lemma \ref{le3.2}, \eqref{FBSDE8.1} is uniquely solvable for $\delta=0$. When $\delta=1$ and $\left(\kappa, \xi,\rho(\cdot)\right)=(\mathbf{0},\mathbf{0},\mathbf{0})$, \eqref{FBSDE8.1} turns to \eqref{fbsde}. In what follows, we will prove that there exists a fixed step length $\iota_0\geq 0$, such that if for some $\delta_0\in[0,1)$, \eqref{fbsde} is uniquely solvable for any $\left(\kappa, \xi,\rho(\cdot)\right)\in\mathcal H[0,T]$, then the same conclusion also holds for $\delta_0$ being replaced by $\delta_0+\iota \leq 1$ with $\iota \in [0,\iota_0]$. Once this has been proved, we can increase the parameter $\delta$ step by step until it equals $1$. This approach is formally known as the continuation method, please refer to \cite{Hu7, PW15, Yong21} for more information.

In what follows, we present a priori estimate and establish the continuity lemma (see Lemmas \ref{L3.3} and \ref{20.1} below).

\begin{Lemma}\label{L3.3}
Let Assumptions (A1)-(A2) hold for the given $\left(\Psi, \Phi, \Gamma\right)$. Let $\delta\in[0,1]$ and $\left(\kappa,\xi,\rho(\cdot)\right)$, $\left(\kappa',\xi',\rho'(\cdot)\right)\in\mathcal H[0,T]$. Suppose that $\pi(\cdot), \pi'(\cdot)\in \big(N_{\mathbb F}^2\left(0,T; \mathbb R^{3n}\right)$ \\
$\times \mathcal M_{\mathbb F}^2\left(0,T; \mathbb R\right)\big)^2$ satisfying \eqref{FBSDE8.1} with $\left(\Psi^{\delta}+\kappa, \Phi^{\delta}+\xi, \Gamma+\rho(\cdot)\right)$ and $\big(\Psi^{\delta}+\kappa', \Phi^{\delta}+\xi', \Gamma+\rho'(\cdot)\big)$, respectively. Then, we have
\begin{equation}\begin{aligned}\label{11-1}
&\mathbb{E}\left[\sup_{t\in[0,T]}\left|\Delta{\color{black}{\emph{\textbf{x}}}}(t)\right|^2
+\sup_{t\in[0,T]}\left|\Delta {\color{black}{\emph{\textbf{y}}}}(t)\right|^2
+\int_0^T\left|\Delta {\color{black}{\emph{\textbf{z}}}}(t)\right|^2dt+\int_0^T\left|\Delta \tilde{\color{black}{ {\emph{\textbf{z}}}}}(t)\right|^2dt\right] \\
\leq &\ K\mathbb{E}\left[\left|\Delta\kappa\right|^2+\left|\Delta\xi\right|^2
+\left(\int_0^T\left|\Delta\varphi(t)\right|dt\right)^2
+\left(\int_0^T\left|\Delta\eta(t)\right|dt\right)^2
+\int_0^T\left|\Delta\gamma(t)\right|^2 dt\right],
\end{aligned}\end{equation}
where $\Delta {\color{black}{\emph{\textbf{x}}}}(\cdot)={\color{black}{\emph{\textbf{x}}}}(\cdot)-{\color{black}{\emph{\textbf{x}}}}'(\cdot)$, $\Delta\kappa=\kappa-\kappa'$, etc. Here $K$ is a positive constant, which is independent of $\delta$.
\end{Lemma}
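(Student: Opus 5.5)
We prove the a priori estimate \eqref{11-1} by combining standard SDE/BSDE estimates with the monotonicity structure of Assumption (A2). We first treat the "first layer" $(x_1,y_1,z_1,\tilde z_1)$ and the "second layer" $(x_2,y_2,z_2,\tilde z_2)$ separately, and then close the argument by an induction on the two layers. Throughout we write $\Delta\Theta=\Theta-\Theta'$ and let $\mathcal I$ denote the right-hand side of \eqref{11-1} without $K$.

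\emph{Step 1: standard estimates.} Regard the forward equation for $\Delta x_j$ as an SDE with $(\Delta y,\Delta z)$ frozen. The Burkholder--Davis--Gundy inequality and Gronwall's lemma give $\mathbb E\sup_t|\Delta x_j|^2$ bounded by the data $\Delta\kappa,\Delta\eta,\Delta\gamma$ plus the dominating quantities $\frac{1}{\mu_1}|\widetilde B_1\Delta y_j+\cdots|$ from \eqref{2.2}, and, for $j=1$, the cross term $K_0|\widetilde L_2\Delta\Theta_2+\widetilde L_2'\Delta\widehat\Theta_2|$ from \eqref{A3-1}. Symmetrically, the BSDE estimate with Markov-chain martingale (It\^o's formula for $|\Delta y_j|^2$ plus $\mathbb E\int|\Delta\tilde z_j|^2 d[M]$) bounds $\Delta y_j,\Delta z_j,\Delta\tilde z_j$ by $\Delta\xi,\Delta\varphi$ and the dominating terms $|\widetilde G_1\Delta x_j+\widetilde G_1'\Delta\widehat x_j|$ and $|\widetilde A_1\Delta x_j+\widetilde A_1'\Delta\widehat x_j|$. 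We use the conditional Jensen inequality $\mathbb E|\widehat h|^2\le\mathbb E|h|^2$ to absorb the conditional-mean terms.

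\emph{Step 2: duality estimates.} Apply It\^o's formula to $\langle\Delta x_2,\Delta y_2\rangle$ on $[0,T]$. The jump part integrates to a martingale, since $x$ is continuous. Take expectation and insert the conditional expectations $\mathbb E[\cdot|\mathcal F^\alpha_{t-}]$, which is legitimate by the tower property because the monotonicity inequalities \eqref{2.5} are stated conditionally. The $\delta$-interpolated coefficients satisfy \eqref{2.5} with the same constants, by construction of $\Gamma^0,\Phi^0,\Psi^0$ in \eqref{PPG}. This yields
\[
\nu_1\,\mathbb E\Big[|\widetilde G_1\Delta x_2+\widetilde G_1'\Delta\widehat x_2|^2+\int_0^T|\widetilde A_1\Delta x_2+\cdots|^2dt\Big]+\mu_1\,\mathbb E\Big[|\widetilde M_1\Delta y_2(0)|^2+\int_0^T|\widetilde B_1\Delta y_2+\cdots|^2dt\Big]
\]
is bounded by $\varepsilon\,\mathbb E[\sup|\Delta x_2|^2+\sup|\Delta y_2|^2+\int|\Delta z_2|^2]+C_\varepsilon\mathcal I$. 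In the case $\mu_1>0,\nu_1=0$ we first bound $(\Delta y_2,\Delta z_2)$ by the BSDE estimate, which involves no $x$-dominating term; then the forward estimate bounds $\Delta x_2$ through $\frac1{\mu_1}|\widetilde B_1\Delta y_2+\cdots|$, which is controlled by the duality inequality. In the case $\nu_1>0,\mu_1=0$ the roles are reversed. Choosing $\varepsilon$ small closes the estimate for layer 2 in terms of $\mathcal I$ alone.

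\emph{Step 3: layer 1 and the obstacle.} For layer 1 the same argument using \eqref{2.4} applies, but the cross terms with layer 2 appear with constant $K_0$ via \eqref{A3-1}. Here the main difficulty is that these cross terms are not small. We handle them with the joint monotonicity \eqref{A3-2}. Apply It\^o's formula to $\langle\Delta\mathbf x,\Lambda\Delta\mathbf y\rangle=\langle\Delta x_1,\Delta y_2\rangle+\langle\Delta x_2,\Delta y_1\rangle$. This gives
\[
\mu_2\,\mathbb E\Big[|\widetilde M_2\Delta y_2(0)|^2+|\widetilde G_2\Delta x_2+\widetilde G_2'\Delta\widehat x_2|^2+\int_0^T|\widetilde L_2\Delta\Theta_2+\widetilde L_2'\Delta\widehat\Theta_2|^2dt\Big]\le\varepsilon\|\Delta\pi\|^2+C_\varepsilon\mathcal I,
\]
which controls exactly the quantities multiplied by $K_0$ in \eqref{A3-1}. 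Feeding this, together with the layer-2 bound of Step 2, into the layer-1 estimates of Steps 1--2 bounds layer 1. Summing, choosing $\varepsilon$ small relative to $K,K_0,K_0',\mu_1,\nu_1,\mu_2$, and absorbing yields \eqref{11-1}.

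The constant $K$ is independent of $\delta$, because every constant used comes only from the Lipschitz bounds, $\mu_1,\nu_1,\mu_2$, $K_0,K_0'$ and the bounds of the matrices in (A2). These are preserved under the convex combination \eqref{alpha0}, and the dominations scale with factor at most $\max(1,\cdot)$. The delicate points are the bookkeeping of the $\varepsilon$-absorptions across the two layers and checking that \eqref{A3-2} survives the convex interpolation with $\Gamma^0$, which it does by the triangular choice of $\mathbb L_2,\mathbb M_2,\mathbb G_2$.
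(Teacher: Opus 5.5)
Your architecture matches the paper's proof. You estimate each layer with the other layer's increments treated as perturbation data. The paper does this by invoking Proposition~\ref{prop1} with one layer frozen, whereas you re-derive that single-layer estimate by hand via duality and \eqref{2.4}/\eqref{2.5}. You then note that layer~1 sees layer~2 only through the quantities dominated in \eqref{A3-1}, control these by It\^o's formula for $\langle\Delta x_1,\Delta y_2\rangle+\langle\Delta x_2,\Delta y_1\rangle$ together with \eqref{A3-2}, and absorb.

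There is, however, one false step: the claim at the end of your Step~2 that layer~2 closes ``in terms of $\mathcal I$ alone.'' The maps $\Psi_2,\Phi_2,\Gamma_2$ depend on $y_1$, $(x_1,\widehat x_1)$ and $(\Theta_1,\widehat\Theta_1)$ with only the Lipschitz bound of (A1). Assumption (A2) gives no domination or monotonicity for this dependence: \eqref{A3-1} is one-sided (it concerns only $\Psi_1,\Phi_1,\Gamma_1$), and \eqref{2.5} compares arguments with layer~1 held fixed. Hence in It\^o's formula for $\langle\Delta x_2,\Delta y_2\rangle$ you must split, e.g., $\Gamma_2(\Theta_1,\Theta_2)-\Gamma_2(\Theta_1',\Theta_2')$ into a part to which \eqref{2.5} applies and a remainder $\Gamma_2(\Theta_1,\Theta_2')-\Gamma_2(\Theta_1',\Theta_2')$. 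That remainder has size $K(|\Delta\Theta_1|+|\Delta\widehat\Theta_1|)$ with a non-small $K$; the same happens at $t=0$ and $t=T$. These remainders also enter the forward and backward layer-2 estimates. The correct layer-2 bound is therefore $K\big(\mathcal I+\|\Delta(x_1,y_1,z_1,\tilde z_1)\|^2\big)$, which is the paper's \eqref{14-1}.

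The repair is straightforward and gives exactly the paper's proof. Let $\mathcal C$ denote the sum of the \eqref{A3-1}-dominated quantities, so your layer-1 bound reads $\|\Delta(x_1,y_1,z_1,\tilde z_1)\|^2\le K(\mathcal I+\mathcal C)$. Combining this with the corrected layer-2 bound yields $\|\Delta\pi\|^2\le K(\mathcal I+\mathcal C)$, as in \eqref{15-1-1}. Then \eqref{A3-2} gives $\mathcal C\le\varepsilon\|\Delta\pi\|^2+C_\varepsilon\mathcal I$. Here $\varepsilon$ must be chosen after, and small relative to, this combined constant, which includes the layer-2 Lipschitz constant, not merely relative to the layer-1 constants.

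A minor point concerns your single-layer sketch in the case $\mu_1>0,\ \nu_1=0$. The BSDE estimate does involve $\Delta x_2$, because the $K_0'$-bounds in \eqref{2.2} are not multiplied by $\nu_1$. It is the forward equation, whose $(y,z)$-dependence is $\frac{1}{\mu_1}$-dominated, that is controlled through the duality inequality; the BSDE estimate follows after. Since the two inequalities are combined anyway, this affects only the exposition.
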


{\color{black}{
\emph{Proof of Lemma \ref{L3.3}:} For the sake of clarity, we divide the proof process into the following steps. 

\textbf{Step 1}. Freeze $\left(x_2(\cdot), y_2(\cdot),z_2(\cdot),\tilde z_2(\cdot)\right)$. Then, utilizing Proposition \ref{prop1} for $\left(x_1, y_1,z_1,\tilde z_1\right)$, it arrives that
\begin{equation}\label{E1}
\begin{aligned}
&\mathbb{E}\left[\sup_{t\in[0,T]}\left|\Delta x_1(t)\right|^2+\sup_{t\in[0,T]}\left|\Delta y_1(t)\right|^2
+\int_0^T\left|\Delta z_1(t)\right|^2dt+\int_0^T\left|\Delta \tilde z_1(t)\right|^2dt\right] \\
\leq &\ K\mathbb{E}\left[\mathbb J_1+\mathbb I_{1,1}+\mathbb I_{1,2}(\alpha_T)
+\int_0^T\mathbb I_{1,3}(\alpha_{t-})dt\right],
\end{aligned}
\end{equation}
where
$$\mathbb J_1=\left|\Delta\kappa_1\right|^2+\left|\Delta\xi_1\right|^2+\left(\int_0^T\left|\Delta\varphi_1(t)\right|dt\right)^2
+\left(\int_0^T\left|\Delta\eta_1(t)\right|dt\right)^2+\left(\int_0^T\left|\Delta\gamma_1(t)\right|dt\right)^2,$$
and
\begin{equation}
\left\{\begin{aligned}
&\mathbb I_{1,1}=\left|\Psi_1^{\delta}\left(y_1'(0),y_2(0)\right)-\Psi_1^{\delta}\left(y_1'(0), y_2'(0)\right)\right|^2,\\
&\mathbb I_{1,2}(\alpha_T)=\big|\Phi_1^{\delta}\left(x_1'(T),x_2(T),\widehat x_1'(T),\widehat x_2(T),
\alpha_T\right) \\
& \qquad\qquad\quad-\Phi_1^{\delta}\left( x_1'(T),x_2'(T),\widehat x_1'(T),\widehat x_2'(T),\alpha_T\right)\big|^2,\\
&\mathbb I_{1,3}(\alpha_{t-})=\Big|\Gamma_1^{\delta}\left(t,\Theta_1'(t),\Theta_2(t), \widehat\Theta_1'(t),
\widehat\Theta_2(t),\alpha_{t-}\right) \\
&\qquad\qquad\quad-\Gamma_1^{\delta}\left(t,\Theta_1'(t), \Theta_2'(t), \widehat\Theta_1'(t), \widehat\Theta_2'(t),\alpha_{t-}\right)\Big|^2. \nonumber
\end{aligned}\right.
\end{equation}
With the definition of $\left(\Psi^{\delta}, \Phi^{\delta}, \Gamma^{\delta}\right)$ given by \eqref{alpha0} and  condition \eqref{A3-1}, we get
\begin{equation}\label{E2}
\begin{aligned}
\mathbb I_{1,1}=&\ \big|(1-\delta)\left[\Psi_1^0\left(y_1'(0),y_2(0)\right)
-\Psi_1^0\left(y_1'(0),y_2'(0)\right)\right]  \\
&\ +\delta\left[\Psi_1\left(y_1'(0),y_2(0)\right)
-\Psi_1\left(y_1'(0),y_2'(0)\right) \right]\big|^2 \\
\leq&\ K \left\{\left|-(1-\delta)\mathcal M_2\Delta y_2(0)\right|^2
+\left|\delta K_0\widetilde M_2\Delta y_2(0)\right|^2\right\} \\
\leq &\ K \left\{ \left|(1-\delta)\mu_2\Lambda\widetilde M_2^{\top}\widetilde M_2\Delta y_2(0)\right|^2
+\left|\delta K_0\widetilde M_2\Delta y_2(0)\right|^2\right\} \\
\leq &\ K\left|\widetilde M_2\Delta y_2(0)\right|^2.
\end{aligned}
\end{equation}
Similarly, we derive 
\begin{equation}\label{E4}
\left\{\begin{aligned}
&\mathbb I_{1,2}(i)\leq K\left|\widetilde G_2(i)\Delta x_2(T)
+\widetilde G_2'(i)\Delta\widehat x_2(T)\right|^2,\\
&\mathbb I_{1,3}(i)\leq K\left|\widetilde L_2(t,i)\Delta\Theta_2(t)+\widetilde L_2'(t,i)\Delta \widehat\Theta_2(t)\right|^2,
\end{aligned}\right.
\end{equation}
$i\in\mathcal M$. Based on \eqref{E1}-\eqref{E4}, we obtain
\begin{equation}\label{13-1}
\begin{aligned}
&\mathbb{E}\left[\sup_{t\in[0,T]}\left|\Delta x_1(t)\right|^2+\sup_{t\in[0,T]}\left|\Delta y_1(t)\right|^2
+\int_0^T\left|\Delta z_1(t)\right|^2dt+\int_0^T\left|\Delta \tilde z_1(t)\right|^2dt\right] \\
\leq &\ K\mathbb{E}\Bigg[\mathbb J_1+\left|\widetilde G_2(\alpha_T)\Delta x_2(T)+\widetilde G_2'(\alpha_T)\Delta \widehat x_2(T)\right|^2+\left|\widetilde M_2\Delta y_2(0)\right|^2 \\
&\ +\int_0^T\left|\widetilde L_2(t,\alpha_{t-})\Delta \Theta_2(t)+\widetilde L_2'(t,\alpha_{t-})\Delta\widehat\Theta_2(t)\right|^2dt\Bigg].
\end{aligned}
\end{equation}

\textbf{Step 2}. Freeze $\left(x_1(\cdot), y_1(\cdot),z_1(\cdot),\tilde z_1(\cdot)\right)$.  Similar to \eqref{E1}, we have
\begin{equation}\label{}
\begin{aligned}
&\mathbb{E}\left[\sup_{t\in[0,T]}\left|\Delta x_2(t)\right|^2+\sup_{t\in[0,T]}\left|\Delta y_2(t)\right|^2
+\int_0^T\left|\Delta z_2(t)\right|^2dt+\int_0^T\left|\Delta \tilde z_2(t)\right|^2dt\right] \\
\leq &\ K\mathbb{E}\left[\mathbb J_2+\mathbb I_{2,1}+\mathbb I_{2,2}(\alpha_T)+\int_0^T\mathbb I_{2,3}(\alpha_{t-})dt\right],  \nonumber
\end{aligned}
\end{equation}
where
$$\mathbb J_2=\left|\Delta\kappa_2\right|^2+\left|\Delta\xi_2\right|^2+\left(\int_0^T\left|\Delta\varphi_2(t)\right|dt\right)^2
+\left(\int_0^T\left|\Delta\eta_2(t)\right|dt\right)^2+\left(\int_0^T\left|\Delta\gamma_2(t)\right|dt\right)^2,$$
and
\begin{equation}
\left\{\begin{aligned}
&\mathbb I_{2,1}=\left|\Psi_2^{\delta}\left(y_1(0),y_2'(0)\right)-\Psi_2^{\delta}\left(y_1'(0),
y_2'(0)\right)\right|^2,\\
&\mathbb I_{2,2}(\alpha_T)=\big|\Phi_2^{\delta}\left(x_1(T),x_2'(T),\widehat x_1(T),\widehat x_2'(T),\alpha_T\right) \\
&\qquad\qquad\quad -\Phi_2^{\delta}\left(x_1'(T),x_2'(T), \widehat x_1'(T),\widehat x_2'(T),\alpha_T\right)\big|^2,\\
&\mathbb I_{2,3}(\alpha_{t-})=\Big|\Gamma_2^{\delta}\left(t,\Theta_1(t),\Theta_2'(t), \widehat\Theta_1(t),
\widehat\Theta_2'(t),\alpha_{t-}\right)  \\
&\qquad\qquad\quad -\Gamma_2^{\delta}\left(t,\Theta_1'(t),\Theta_2'(t),
\widehat\Theta_1'(t),\widehat\Theta_2'(t),\alpha_{t-}\right)\Big|^2. \nonumber
\end{aligned}\right.
\end{equation}
By the definition of $\left(\Psi^{\delta}, \Phi^{\delta}, \Gamma^{\delta}\right)$ given by \eqref{alpha0} and the uniformly Lipschitz conditions of $(\Psi,\Phi,\Gamma)$, we have
\begin{equation}\label{}
\left\{\begin{aligned}
& \mathbb I_{2,1}\leq K\left|\Delta y_1(0)\right|^2,  \quad
\mathbb I_{2,2}(\alpha_T)\leq K\left[\left|\Delta x_1(T)\right|^2
+\left|\Delta \widehat x_1(T)\right|^2\right], \\
& \mathbb I_{2,3}(\alpha_{t-})\leq K\left[\left|\Delta \Theta_1(t)\right|^2
+\left|\Delta \widehat\Theta_1(t)\right|^2\right].\nonumber
\end{aligned}\right.
\end{equation}
Then, we have
\begin{equation}\label{14-1}
\begin{aligned}
&\mathbb{E}\left[\sup_{t\in[0,T]}\left|\Delta x_2(t)\right|^2+\sup_{t\in[0,T]}\left|\Delta y_2(t)\right|^2
+\int_0^T\left|\Delta z_2(t)\right|^2dt+\int_0^T\left|\Delta \tilde z_2(t)\right|^2dt\right] \\
\leq &\ K\mathbb{E}\left[\mathbb J_2+\left|\Delta x_1(T)\right|^2+\left|\Delta \widehat x_1(T)\right|^2
+\left|\Delta y_1(0)\right|^2
+\int_0^T\left(\left|\Delta \Theta_1(t)\right|^2+\left|\Delta \widehat\Theta_1(t)\right|^2\right)dt\right]\\
\leq &\ K \mathbb{E}\left[\mathbb J_2+\sup_{t\in[0,T]}\left|\Delta x_1(t)\right|^2+\sup_{t\in[0,T]}\left|\Delta y_1(t)\right|^2+\int_0^T\left|\Delta \Theta_1(t)\right|^2dt\right].
\end{aligned}
\end{equation}
It follows from \eqref{13-1} and \eqref{14-1} that
\begin{equation}\begin{aligned}\label{15-1-1}
&\mathbb{E}\left[\sup_{t\in[0,T]}\left|\Delta \textbf{x}(t)\right|^2
+\sup_{t\in[0,T]}\left|\Delta \textbf{y}(t)\right|^2
+\int_0^T\left|\Delta \textbf{z}(t)\right|^2dt
+\int_0^T\left|\Delta \tilde{\textbf{z}}(t)\right|^2dt\right] \\
\leq &\ K\mathbb{E}\Bigg[\mathbb J+\left|\widetilde G_2(\alpha_T)\Delta x_2(T)+\widetilde G_2'(\alpha_T)\Delta\widehat x_2(T)\right|^2+\left|\widetilde M_2\Delta y_2(0)\right|^2 \\
&\ +\int_0^T\left(\left|\widetilde L_2(t,\alpha_{t-})\Delta \Theta_2(t)+\widetilde L_2'(t,\alpha_{t-})\Delta \widehat\Theta_2(t)\right|^2\right)dt\Bigg],
\end{aligned}\end{equation}
where
\begin{equation}\label{mathbbJ}
\mathbb J=\left|\Delta\kappa\right|^2+\left|\Delta\xi\right|^2+\left(\int_0^T\left|\Delta\varphi(t)\right|dt\right)^2
+\left(\int_0^T\left|\Delta\eta(t)\right|dt\right)^2+\left(\int_0^T\left|\Delta\gamma(t)\right|dt\right)^2.
\end{equation}

\textbf{Step 3}. Applying the generalized It\^{o}'s formula to $\langle\Delta x_1(\cdot),\Delta y_2(\cdot)\rangle$ \\$+\langle\Delta x_2(\cdot),\Delta y_1(\cdot)\rangle$, we have
\begin{equation}\begin{aligned}\label{16-2}
&\ \mathbb{E}\Big[\left\langle\Phi^{\delta}\left(\textbf{x}(T),\widehat{\textbf{x}}(T),
\alpha_T\right)-\Phi^{\delta}\left(\textbf{x}'(T),\widehat{\textbf{x}}'(T),\alpha_T\right)
+\Delta\xi, \Lambda\Delta \textbf{x}(T)\right\rangle\\
& -\left\langle \Psi^{\delta}(\textbf{y}(0)) 
-\Psi^{\delta}(\textbf{y}'(0))+\Delta\kappa, \Lambda\Delta \textbf{y}(0)\right\rangle\Big] \\
=&\ \mathbb{E}\int_0^T\left\langle \Gamma^{\delta}\left(t, \Theta(t), \widehat\Theta(t),\alpha_{t-}\right)-\Gamma^{\delta}\left(t, \Theta'(t), \widehat\Theta'(t),\alpha_{t-}\right)+\Delta\rho(t),\Lambda\Delta\Theta(t)\right\rangle dt,
\end{aligned}\end{equation}
which implies that
\begin{equation}\begin{aligned}
&\ \mathbb{E}\Bigg[\left\langle\Phi^{\delta}\left(\textbf{x}(T),\widehat{\textbf{x}}(T),
\alpha_T\right)-\Phi^{\delta}\left(\textbf{x}'(T),\widehat{\textbf{x}}'(T),\alpha_T\right), 
\Lambda\Delta \textbf{x}(T)\right\rangle \\
&\ -\left\langle \Psi^{\delta}(\textbf{y}(0))-\Psi^{\delta}(\textbf{y}'(0)), 
\Lambda\Delta \textbf{y}(0)\right\rangle \nonumber
\end{aligned}\end{equation}
\begin{equation}\begin{aligned}
&\ -\int_0^T\left\langle \Gamma^{\delta}\left(t, \Theta(t), \widehat\Theta(t),\alpha_{t-}\right)-\Gamma^{\delta}\left(t, \Theta'(t), \widehat\Theta'(t),\alpha_{t-}\right),\Lambda\Delta\Theta(t)\right\rangle dt \Bigg]\\
=&\ \mathbb{E}\left[-\left\langle\Delta\xi, \Lambda\Delta \textbf{x}(T)\right\rangle
+\left\langle\Delta\kappa, \Lambda\Delta \textbf{y}(0)\right\rangle
+\int_0^T\left\langle\Delta\rho(t),\Lambda\Delta\Theta(t)\right\rangle dt\right].  \nonumber
\end{aligned}\end{equation}
Recalling \eqref{A3-2}, we obtain
\begin{equation}\begin{aligned}\label{18-1}
&\ \mathbb{E}\left[\left|\widetilde G_2(\alpha_T)\Delta x_2(T)+\widetilde G_2'(\alpha_T)\Delta\widehat x_2(T)\right|^2\Big|\mathcal F_{T-}^{\alpha}\right]+\mathbb{E}\left|\widetilde M_2\Delta y_2(0)\right|^2 \\
&\ +\int_0^T\mathbb{E}\left[\left|\widetilde L_2(t,\alpha_{t-})\Delta\Theta_2(t)+\widetilde L_2'(t,\alpha_{t-})\Delta \widehat\Theta_2(t)\right|^2\Big|\mathcal F_{t-}^{\alpha}\right]dt \\
\leq & \ \frac{1}{\mu_2}\mathbb{E}\left[\left\langle \Phi^{\delta}\left(\textbf{x}(T),\widehat{\textbf{x}}(T),
\alpha_T\right)-\Phi^{\delta}\left(\textbf{x}'(T),\widehat{\textbf{x}}'(T),\alpha_T\right),\Lambda\Delta \textbf{x}(T)\right\rangle\Big|\mathcal F_{T-}^{\alpha}\right] \\
&\ -\frac{1}{\mu_2}\mathbb{E}\left[\left\langle \Psi^{\delta}\left(\textbf{y}(0)\right)
-\Psi^{\delta}\left(\textbf{y}'(0)\right), \Lambda \Delta \textbf{y}(0)\right\rangle\right] \\
&\ -\frac{1}{\mu_2}\int_0^T\mathbb{E}\Big[\Big\langle \Gamma^{\delta}\left(t, \Theta(t), \widehat\Theta(t),\alpha_{t-}\right)-\Gamma^{\delta}\left(t, \Theta'(t), \widehat\Theta'(t),\alpha_{t-}\right),\\
&\ \Lambda\Delta\Theta(t)\Big\rangle \Big|\mathcal F_{t-}^{\alpha}\Big]dt.
\end{aligned}\end{equation}
Taking $\mathbb{E}[\cdot]$ on both sides of \eqref{18-1} and applying \eqref{16-2}, we have
\begin{equation}\begin{aligned}\label{}
&\ \mathbb{E}\Bigg[\left|\widetilde G_2(\alpha_T)\Delta x_2(T)+\widetilde G_2'(\alpha_T)\Delta\widehat x_2(T)\right|^2+\left|\widetilde M_2\Delta y_2(0)\right|^2 \\
&\ +\int_0^T\left|\widetilde L_2(t,\alpha_{t-})\Delta\Theta_2(t)+\widetilde L_2'(t,\alpha_{t-})\Delta\widehat\Theta_2(t)\right|^2dt\Bigg] \\
= & \ \frac{1}{\mu_2}\mathbb{E}\left[-\left\langle \Delta\xi,\Lambda\Delta \textbf{x}(T)\right\rangle
+\left\langle \Delta\kappa, \Lambda \Delta \textbf{y}(0)\right\rangle+\int_0^T\left\langle \Delta\rho(t),\Lambda\Delta\Theta(t)\right\rangle dt\right]. \nonumber
\end{aligned}\end{equation}
With Young's inequality, for any $\epsilon>0$, it yields
\begin{equation}\begin{aligned}\label{18-2}
&\ \mathbb{E}\Bigg[\left|\widetilde G_2(\alpha_T)\Delta x_2(T)+\widetilde G_2'(\alpha_T)
\Delta\widehat x_2(T)\right|^2+\left|\widetilde M_2\Delta y_2(0)\right|^2 \\
&\ +\int_0^T\left|\widetilde L_2(t,\alpha_{t-})\Delta\Theta_2(t)
+\widetilde L_2'(t,\alpha_{t-})\Delta\widehat\Theta_2(t)\right|^2dt\Bigg] \\
\leq&\ \frac{1}{\mu_2}\mathbb{E}\Big[\frac{1}{4\epsilon}\left|\Delta\xi\right|^2
+\epsilon\left|\Lambda\Delta \textbf{x}(T)\right|^2
+\frac{1}{4\epsilon}\left|\Delta\kappa\right|^2+\epsilon\left|\Lambda\Delta \textbf{y}(0)\right|^2 \\
&\ +\int_0^T\left(\frac{1}{4\epsilon}\left|\Delta\rho(t)\right|^2
+\epsilon\left|\Lambda\Delta \Theta(t)\right|^2\right)dt\Big]  \\
\leq &\ \frac{1}{4\mu_2\epsilon}\mathbb{E}[\mathbb{J}]
+\frac{2\epsilon}{\mu_2}\mathbb{E}\Bigg[\sup_{t\in[0,T]}\left|\Delta \textbf{x}(t)\right|^2
+\sup_{t\in[0,T]}\left|\Delta \textbf{y}(t)\right|^2+\int_0^T\left|\Delta \textbf{z}(t)\right|^2dt\\
&\ +\int_0^T\left|\Delta \tilde{\textbf{z}}(t)\right|^2dt\Bigg],
\end{aligned}\end{equation}
where $\mathbb{J}$ is given by \eqref{mathbbJ}. It follows from \eqref{15-1-1} and \eqref{18-2} that
\begin{equation}\begin{aligned}\label{}
&\mathbb{E}\left[\sup_{t\in[0,T]}\left|\Delta \textbf{x}(t)\right|^2
+\sup_{t\in[0,T]}\left|\Delta \textbf{y}(t)\right|^2
+\int_0^T\left|\Delta \textbf{z}(t)\right|^2dt
+\int_0^T\left|\Delta \tilde{\textbf{z}}(t)\right|^2dt\right] \\
\leq &\ \left(K+\frac{1}{4\mu_2\epsilon}\right)\mathbb{E}[\mathbb{J}]
+\frac{2\epsilon K}{\mu_2}\mathbb{E}\Bigg[\sup_{t\in[0,T]}\left|\Delta \textbf{x}(t)\right|^2
+\sup_{t\in[0,T]}\left|\Delta \textbf{y}(t)\right|^2+\int_0^T\left|\Delta \textbf{z}(t)\right|^2dt\\
&\ +\int_0^T\left|\Delta \tilde{\textbf{z}}(t)\right|^2dt\Bigg]. \nonumber
\end{aligned}\end{equation}
Choosing $\epsilon=\frac{\mu_2}{4K}$, we obtain \eqref{11-1}. \hfill$\square$

}}

\begin{Lemma}\label{20.1}
Let $\left(\Psi, \Phi, \Gamma\right)$ be given to satisfy Assumptions (A1)-(A2). Then there exists a positive constant $\iota_0>0$, such that if for $\delta_0\in[0,1)$, \eqref{FBSDE8.1} is uniquely solvable for any $(\kappa, \xi, \rho(\cdot))\in\mathcal H[0,T]$, then the same result holds for $\delta=\delta_0+\iota$ with $\iota\in[0,\iota_0]$ and $\delta\leq 1$.
\end{Lemma}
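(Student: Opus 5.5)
This is the standard continuation step. We build a contraction on the solution space, and its contraction constant comes from the a priori estimate of Lemma \ref{L3.3}, whose constant $K$ does not depend on $\delta$.

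\emph{Setup.} Fix $\delta_0$ for which \eqref{FBSDE8.1} is uniquely solvable for every $(\kappa,\xi,\rho(\cdot))\in\mathcal H[0,T]$, and fix $\iota\in[0,\iota_0]$. For a given $(\kappa,\xi,\rho(\cdot))$ and any input $\pi'=(x_1',y_1',z_1',\tilde z_1',x_2',y_2',z_2',\tilde z_2')$ in the space $\big(N_{\mathbb F}^2(0,T;\mathbb R^{3n})\times\mathcal M_{\mathbb F}^2(0,T;\mathbb R)\big)^2$, we solve \eqref{FBSDE8.1} at the parameter $\delta_0$ with the modified data
\begin{equation*}
\begin{aligned}
\kappa^{\pi'}&=\kappa+\iota\big(\Psi-\Psi^0\big)(\textbf{y}'(0)),\qquad
\xi^{\pi'}=\xi+\iota\big(\Phi-\Phi^0\big)\big(\textbf{x}'(T),\widehat{\textbf{x}}'(T),\alpha_T\big),\\
\rho^{\pi'}(t)&=\rho(t)+\iota\big(\Gamma-\Gamma^0\big)\big(t,\Theta'(t),\widehat\Theta'(t),\alpha_{t-}\big).
\end{aligned}
\end{equation*}
By (A1), and since $\Psi^0,\Phi^0,\Gamma^0$ in \eqref{PPG} are linear with bounded coefficients, these data again lie in $\mathcal H[0,T]$. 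The key point is that conditional expectations preserve the relevant integrability, so that $\widehat x'(T)$ and $\widehat\Theta'$ are square integrable. By the induction hypothesis the problem has a unique solution, which we call $\pi=\mathcal T(\pi')$. Because $\Psi^{\delta_0}+\iota(\Psi-\Psi^0)=\Psi^{\delta_0+\iota}$, and likewise for $\Phi$ and $\Gamma$, a fixed point of $\mathcal T$ is exactly a solution of \eqref{FBSDE8.1} at $\delta=\delta_0+\iota$.

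\emph{Contraction.} Take two inputs $\pi',\pi''$ with images $\pi=\mathcal T(\pi')$ and $\bar\pi=\mathcal T(\pi'')$. Both images solve \eqref{FBSDE8.1} at $\delta_0$, and the coefficients $\Psi^{\delta_0},\Phi^{\delta_0},\Gamma^{\delta_0}$ satisfy (A1)--(A2) uniformly in $\delta_0$, since these conditions are stable under convex combination with the model coefficients. Lemma \ref{L3.3} therefore gives
\begin{equation*}
\|\pi-\bar\pi\|^2\le K\,\mathbb E\Big[|\Delta\kappa|^2+|\Delta\xi|^2+\Big(\int_0^T|\Delta\varphi|dt\Big)^2+\Big(\int_0^T|\Delta\eta|dt\Big)^2+\int_0^T|\Delta\gamma|^2dt\Big],
\end{equation*}
where $\|\cdot\|^2$ is the left-hand side norm of \eqref{11-1}. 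The differences of the data are $\iota$ times differences of $\Psi-\Psi^0$, $\Phi-\Phi^0$ and $\Gamma-\Gamma^0$ evaluated at $\pi'$ and at $\pi''$. By the uniform Lipschitz property we then have:
\begin{itemize}
\item $|\Delta\kappa|^2\le \iota^2L|\Delta \textbf{y}(0)|^2$;
\item $\mathbb E|\Delta\xi|^2\le \iota^2L\,\mathbb E\big(|\Delta\textbf{x}(T)|^2+|\Delta\widehat{\textbf{x}}(T)|^2\big)\le 2\iota^2L\,\mathbb E\sup_t|\Delta\textbf{x}(t)|^2$, using conditional Jensen;
\item $\big(\int_0^T|\Delta\varphi|\big)^2\le T\iota^2L\int_0^T\big(|\Delta\Theta|^2+|\Delta\widehat\Theta|^2\big)dt$, with the same bound for $\eta$ and $\gamma$.
\end{itemize}
The only care needed is in the hatted terms, $\mathbb E|\widehat h(t)|^2\le\mathbb E|h(t)|^2$, which is immediate. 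In the end $\|\pi-\bar\pi\|^2\le K' \iota^2\|\pi'-\pi''\|^2$, with $K'$ depending only on $K$, $T$ and the Lipschitz constants, and not on $\delta_0$.

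\emph{Choice of $\iota_0$.} Set $\iota_0=1/(2\sqrt{K'})$. Then $\mathcal T$ is a contraction with constant at most $1/2$ on the Banach space $\big(N_{\mathbb F}^2\times\mathcal M_{\mathbb F}^2\big)^2$, so Banach's fixed point theorem yields a unique fixed point. This gives existence at $\delta_0+\iota$. Uniqueness follows either from uniqueness of the fixed point or directly from Lemma \ref{L3.3} applied at $\delta_0+\iota$ with identical data.

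The main obstacle is checking that Lemma \ref{L3.3} really applies with a constant uniform in $\delta_0$ to the system with perturbed data. This reduces to verifying that the interpolated coefficients $(\Psi^{\delta},\Phi^{\delta},\Gamma^{\delta})$ satisfy the domination and monotonicity conditions \eqref{2.2}--\eqref{A3-2} with the same constants (up to a fixed factor). The model coefficients \eqref{PPG} were built to satisfy these with equality, so convexity preserves the inequalities. The domination bounds carry over by the triangle inequality, using the ratio $1/\mu_1$ together with $K_0$ and $K_0'$. I would check this first, especially the sign structure involving $\Lambda$ in \eqref{A3-2}.
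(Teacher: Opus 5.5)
Your proposal is correct and follows the paper's proof essentially line for line. You freeze $\pi'$ and solve \eqref{FBSDE8.1} at $\delta_0$ with the data shifted by $\iota(\Psi-\Psi^0)$, $\iota(\Phi-\Phi^0)$, $\iota(\Gamma-\Gamma^0)$ evaluated at $\pi'$, which is exactly the paper's $(\kappa',\xi',\rho')$ since $\Psi^0=\sum_j\mathbb M_j\textbf{y}$ and similarly for $\Phi^0,\Gamma^0$; Lemma \ref{L3.3} then gives a contraction for $\iota\le\iota_0=1/(2\sqrt{K})$.

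The ``main obstacle'' you flag at the end is not something you need to verify here. Lemma \ref{L3.3} is already stated with a constant $K$ independent of $\delta$, so you can simply invoke it.
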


{\color{black}{
\emph{Proof of Lemma \ref{20.1}:} Let $\iota_0>0$ be determined later. Let $\iota\in[0,\iota_0]$. 
For any $\left(\kappa',\xi',\rho'(\cdot)\right)\in\mathcal H[0,T]$ with $\rho'(\cdot)=\left({\varphi'}^{\top}(\cdot),{\eta'}^{\top}(\cdot), {\gamma'}^{\top}(\cdot)\right)^{\top}$,
for any $\pi(\cdot)\in \left(N_{\mathbb F}^2\left(0,T; \mathbb R^{3n}\right)\times \mathcal M_{\mathbb F}^2\left(0,T; \mathbb R\right)\right)^2$, consider a CMF-FBSDE with unknown $\Pi(\cdot)$ as follows:
\begin{equation}\label{20-1}
\left\{\begin{aligned}
dX_j(t)=&\ \left[b_j^{\delta_0}\left(\pmb{\Theta}_t\right)+\eta_j'(t)\right]dt
+\left[\sigma_j^{\delta_0}\left(\pmb{\Theta}_t\right)+\gamma_j'(t)\right]dW(t),\\
dY_j(t)=&\ \left[g_j^{\delta_0}\left(\pmb{\Theta}_t\right)
+\varphi_j'(t)\right]dt+Z_j(t)dW(t)+\widetilde Z_j(t)\bullet dM(t),\\
X_j(0)=&\ \Psi_j^{\delta_0}\left(Y(0)\right)+\kappa_j', \quad
Y_j(T)=\Phi_j^{\delta_0}\left(X(T), \widehat{X}(T),\alpha_T\right)+\xi_j', \quad j=1,2,
\end{aligned}\right.
\end{equation}
where 
\begin{equation}\label{}
\left\{\begin{aligned}
& \pmb{\Theta}_t=\left(t,\Theta(t),\widehat\Theta(t),\alpha_{t-}\right),\quad
\pi=\left(x_1,y_1,z_1,\tilde z_1,x_2,y_2,z_2,\tilde z_2\right), \\
&\Pi=\left(X_1,Y_1,Z_1,\widetilde Z_1,X_2,Y_2,Z_2,\widetilde Z_2\right),\quad
\kappa'=\iota\left[\Psi(\textbf{y}(0))-\sum_{j=1}^2\mathbb M_j\textbf{y}(0)\right]+\kappa, \\
&\rho'(\cdot)=\iota\left[\Gamma\left(\pmb{\Theta}_{\cdot}\right)
-\sum_{j=1}^2\mathbb L_j(\cdot,\alpha_{t-})\Theta(\cdot)
-\sum_{j=1}^2\mathbb L_j'(\cdot,\alpha_{t-})\widehat\Theta(\cdot)\right]+\rho(\cdot),\\
& \xi'=\iota\left[\Phi\left(\textbf{x}(T),\widehat{\textbf{x}}(T),\alpha_T\right)
-\sum_{j=1}^2\mathbb G_j(\alpha_T)\textbf{x}(T)-\sum_{j=1}^2\mathbb G_j'(\alpha_T)
\widehat{\textbf{x}}(T)\right]+\xi. \nonumber
\end{aligned}\right.
\end{equation}
Under our assumption, \eqref{20-1} is uniquely solvable in 
$\left(N_{\mathbb F}^2\left(0,T; \mathbb R^{3n}\right)\times \mathcal M_{\mathbb F}^2\left(0,T; \mathbb R\right)\right)^2$. Define a mapping $f_{\delta_0+\iota}$ as follows:
\begin{equation}\begin{aligned}
\Pi(\cdot)=f_{\delta_0+\iota}\left(\pi(\cdot)\right):&\ 
\left(N_{\mathbb F}^2\left(0,T; \mathbb R^{3n}\right)\times 
\mathcal M_{\mathbb F}^2\left(0,T; \mathbb R\right)\right)^2\longrightarrow \\
&\ \left(N_{\mathbb F}^2\left(0,T; \mathbb R^{3n}\right)\times 
\mathcal M_{\mathbb F}^2\left(0,T; \mathbb R\right)\right)^2.\nonumber
\end{aligned}\end{equation}
Next we proceed to prove that $f_{\delta_0+\iota}$ is contractive.
Let $\pi(\cdot),\pi'(\cdot)\in \Big(N_{\mathbb F}^2\left(0,T; \mathbb R^{3n}\right)$ \\
$\times\mathcal M_{\mathbb F}^2\left(0,T; \mathbb R\right)\Big)^2$, and $\Pi(\cdot)=f_{\delta_0+\iota}\left(\pi(\cdot)\right)$,
$\Pi'(\cdot)=f_{\delta_0+\iota}\left(\pi'(\cdot)\right)$. Let $\Delta\pi(\cdot)=\pi(\cdot)-\pi'(\cdot)$,
$\Delta\Pi(\cdot)=\Pi(\cdot)-\Pi'(\cdot)$, etc. Applying Lemma \ref{L3.3}, we have
\begin{equation}\label{21-2}
\begin{aligned}
& \mathbb{E}\left[\sup_{t\in[0,T]}\left|\Delta X(t)\right|^2+\sup_{t\in[0,T]}\left|\Delta Y(t)\right|^2
+\int_0^T\left|\Delta Z(t)\right|^2dt+\int_0^T\left|\Delta{\widetilde Z}(t)\right|^2dt\right] \\
\leq &\ K\mathbb{E}\Bigg\{\bigg|\iota\Big[\Psi(\textbf{y}(0))-\Psi(\textbf{y}'(0))
-\sum_{j=1}^2\mathbb M_j\Delta \textbf{y}(0)\Big]\bigg|^2 \\
&\ +\bigg|\iota\Big[\Phi\left(\Lambda_T\right)-\Phi\left(\Lambda_T'\right)
-\sum_{j=1}^2\mathbb G_j(\alpha_T)\Delta \textbf{x}(T)
-\sum_{j=1}^2\mathbb G_j'(\alpha_T)\Delta\widehat{\textbf{x}}(T)\Big] \bigg|^2 \\ 
&\ +\int_0^T\Big|\iota\Big[\Gamma\left(\pmb{\Theta}_t\right)-\Gamma\left(\pmb{\Theta}'_t\right) 
 -\sum_{j=1}^2\mathbb L_j(t,\alpha_{t-})\Delta\Theta(t)-\sum_{j=1}^2\mathbb L_j'(t,\alpha_{t-})\Delta\widehat\Theta(t)\Big]\Big|^2dt\Bigg\} \\
\leq &\ K\iota^2\mathbb{E}\left[\sup_{t\in[0,T]}\left|\Delta \textbf{x}(t)\right|^2
+\sup_{t\in[0,T]}\left|\Delta \textbf{y}(t)\right|^2+\int_0^T\left|\Delta \textbf{z}(t)\right|^2dt+\int_0^T\left|\Delta \tilde{\textbf{z}}(t)\right|^2dt\right],
\end{aligned}
\end{equation}
where $\Lambda_T=\left(\textbf{x}(T),\widehat{\textbf{x}}(T),\alpha_T\right)$, $\Lambda_T'$ and $\pmb{\Theta}'_{\cdot}$ are given by \eqref{fuhao}. Take $\iota_0=\frac{1}{2\sqrt{K}}$. Then, for any $\iota\in[0,\iota_0]$, it follows from \eqref{21-2} that $f_{\delta_0+\iota}$ is contractive.  \hfill$\square$

}}

{\color{black}{\emph{Proof of Theorem \ref{eufbsde}:}}}
Combining {\color{black}{Lemmas \ref{le3.2}}} and \ref{20.1}, the unique solvability of \eqref{fbsde} can be obtained. Further, by Lemma 5.2 and \eqref{FBSDE8.1}, letting $\delta=1$,
$\left(\kappa', \xi', \rho'(\cdot)\right)=
\left(\Psi'({\color{black}{\textbf{y}}}'(0))-\Psi({\color{black}{\textbf{y}}}'(0)), \Phi'\left(\Lambda_T'\right)-\Phi\left(\Lambda_T'\right),  
\Gamma'\left(\pmb{\Theta}'_{\cdot}\right)-\Gamma\left(\pmb{\Theta}'_{\cdot}\right)\right)$
and $\left(\kappa, \xi, \rho(\cdot)\right)=(\mathbf{0},\mathbf{0},\mathbf{0})$,
where $\Lambda_T'$ and $\pmb{\Theta}'_{\cdot}$ are given by \eqref{fuhao}, then \eqref{5-1} holds. \hfill$\square$

\section*{Appendix B}\qquad\qquad\qquad\qquad\qquad 

\emph{Proof of Theorem \ref{OL}:}
Let $\mathbf{u^*_{F_{1,2}}}(\cdot)\in\mathbf{\mathcal U_{F_{1,2}}}$ and ${\color{black}{x_1}}(\cdot)$ be the corresponding state, and let $\left({\color{black}{y_1}}(\cdot), {\color{black}{z_1}}(\cdot),
{\color{black}{\tilde z_1}}(\cdot)\right)$ be given by \eqref{p}. $\mathbf{u^*_{F_{1,2}}}(\cdot)$ is an open-loop saddle point if and only if for any $u_{F,j}\in\mathcal U_{F,j}$, $j=1,2$, and any $\varepsilon\in\mathbb R$,
\begin{equation}\left\{\begin{aligned}\label{J12}
&J_F\left(x_0; u_L(\cdot), u_{F,1}^*(\cdot)+\varepsilon u_{F,1}(\cdot), u_{F,2}^*(\cdot)\right)
\geq J_F\left(x_0; u_L(\cdot), u_{F,1}^*(\cdot), u_{F,2}^*(\cdot)\right), \\
&J_F\left(x_0; u_L(\cdot),u_{F,1}^*(\cdot), u_{F,2}^*(\cdot)+\varepsilon u_{F,2}(\cdot)\right)
\leq J_F\left(x_0; u_L(\cdot), u_{F,1}^*(\cdot), u_{F,2}^*(\cdot)\right).
\end{aligned}\right.\end{equation}
For any $u_{F,1}(\cdot)\in\mathcal U_{F,1}$ and any $\varepsilon\in\mathbb R$, let ${\color{black}{x_1^{\varepsilon}}}(\cdot)$ be the solution to \eqref{state} with $\left(u_{F,1}^*(\cdot)+\varepsilon u_{F,1}(\cdot), u_{F,2}^*(\cdot)\right)$. Moreover, let $x_{F,j}(\cdot)$, $j=1,2,$ be the solution to
\begin{equation}
\left\{\begin{aligned}
&dx_{F,j}(t)=\left[A(t,\alpha_{t-})x_{{F,j}}(t)+\bar{A}(t,\alpha_{t-})\widehat x_{F,j}(t)
+B_{F,j}(t,\alpha_{t-})u_{F,j}(t)\right]dt\\
&\quad +\big[C(t,\alpha_{t-})x_{{F,j}}(t)+\bar{C}(t,\alpha_{t-})\widehat x_{F,j}(t)
+D_{F,j}(t,\alpha_{t-})u_{F,j}(t)\big]dW(t), \\
& x_{F,j}(0)=0. \nonumber
\end{aligned}\right.
\end{equation}
Then, 
\vspace*{-0.1cm}
\begin{equation}\begin{aligned}\label{J1-J2}
&\ J_F\left(x_0; u_L(\cdot), u_{F,1}^*(\cdot)+\varepsilon u_{F,1}(\cdot), u_{F,2}^*(\cdot)\right)
-J_F\left(x_0; u_L(\cdot), u_{F,1}^*(\cdot), u_{F,2}^*(\cdot)\right)  \\
=&\ \varepsilon \mathcal I_{F,1}+\frac{\varepsilon^2}{2}\Upsilon_{F,1},
\end{aligned}\end{equation}
where 
\begin{equation}\begin{aligned}
\mathcal I_{F,1}=&\ \mathbb{E}\bigg\{G_F(\alpha_T){\color{black}{x_1}}(T)x_{F,1}(T)+\bar G_F(\alpha_T)
{\color{black}{\widehat x_1}}(T)\widehat x_{F,1}(T) \\
&\ +\int_0^T\Big[Q_F(t,\alpha_{t-}){\color{black}{x_1}}(t)x_{F,1}(t)
 +\bar Q_F(t,\alpha_{t-}){\color{black}{\widehat x_1}}(t)\widehat x_{F,1}(t) \\
&\ +{u_{F,1}^{*\top}}(t)R_{F,1}(t,\alpha_{t-})u_{F,1}(t)
 +{u_{F,1}^{\top}}(t)S_{F}(t,\alpha_{t-})u_{F,2}^*(t)\Big]dt\bigg\},\nonumber
\end{aligned}\end{equation} 
and
\begin{equation}\begin{aligned}
\Upsilon_{F,1}=&\ \mathbb{E}\bigg\{G_F(\alpha_T)x_{F,1}^2(T)+\bar G_F(\alpha_T)\widehat x_{F,1}^2(T)
+\int_0^T\Big[Q_F(t,\alpha_{t-})x_{F,1}^2(t) \\
&\ +\bar Q_F(t,\alpha_{t-})\widehat x_{F,1}^2(t)
+{u_{F,1}^{\top}}(t)R_{F,1}(t,\alpha_{t-})u_{F,1}(t)\Big]dt\bigg\}.  \nonumber
\end{aligned}\end{equation}
Applying It\^{o}'s formula to ${\color{black}{y_1}}(\cdot)x_{F,1}(\cdot)$ with the relation 
$$\mathbb{E}\Big[\bar G_F(\alpha_T){\color{black}{\widehat x_1}}(T)\widehat x_{F,1}(T)\Big] 
=\mathbb{E}\Big[\bar G_F(\alpha_T){\color{black}{\widehat x_1}}(T)x_{F,1}(T)\Big],$$ we obtain
\begin{equation}\begin{aligned}\label{IF1}
\mathcal I_{F,1}=&\ \mathbb{E}\Bigg\{G_F(\alpha_T){\color{black}{x_1}}(T)x_{F,1}(T)
+\bar G_F(\alpha_T){\color{black}{\widehat x_1}}(T) x_{F,1}(T)\\ 
&\ +\int_0^T\Big[Q_F(t,\alpha_{t-}){\color{black}{x_1}}(t)x_{F,1}(t)
+\bar Q_F(t,\alpha_{t-}){\color{black}{\widehat x_1}}(t)\widehat x_{F,1}(t) \\
&\ +{u_{F,1}^{*\top}}(t)R_{F,1}(t,\alpha_{t-})u_{F,1}(t)
+{u_{F,1}^{\top}}(t)S_{F}(t,\alpha_{t-})u_{F,2}^*(t)\Big]dt\Bigg\} \\
=&\ \mathbb{E}\int_0^T\big\langle B_{F,1}^{\top}(t,\alpha_{t-}){\color{black}{y_1}}(t)+D_{F,1}^{\top}(t,\alpha_{t-}){\color{black}{z_1}}(t)
+R_{F,1}^{\top}(t,\alpha_{t-})u_{F,1}^*(t)\\ 
&\ +S_F(t,\alpha_{t-})u_{F,2}^*(t), u_{F,1}(t)\big\rangle dt. 
\end{aligned}\end{equation}
Putting \eqref{IF1} into \eqref{J1-J2}, it yields  
\begin{equation}\begin{aligned}
& J_F\left(x_0; u_L(\cdot), u_{F,1}^*(\cdot)+\varepsilon u_{F,1}(\cdot), u_{F,2}^*(\cdot)\right)
-J_F\left(x_0; u_L(\cdot), u_{F,1}^*(\cdot), u_{F,2}^*(\cdot)\right) \\
=&\ \varepsilon\mathbb{E}\int_0^T\big\langle B_{F,1}^{\top}(t,\alpha_{t-}){\color{black}{y_1}}(t)
+D_{F,1}^{\top}(t,\alpha_{t-}){\color{black}{z_1}}(t)+R_{F,1}(t,\alpha_{t-})u_{F,1}^*(t)  \\
&\ +S_F(t,\alpha_{t-})u_{F,2}^*(t), u_{F,1}(t)\big\rangle dt+\frac{\varepsilon^2}{2}\Upsilon_{F,1}.\nonumber
\end{aligned}
\end{equation}
Similarly, for any $u_{F,2}(\cdot)\in\mathcal U_{F,2}$ and any $\varepsilon\in\mathbb R$, we have
\begin{equation}\begin{aligned}
& J_F\left(x_0; u_L(\cdot), u_{F,1}^*(\cdot), u_{F,2}^*(\cdot)+\varepsilon u_{F,2}(\cdot)\right)
-J_F\left(x_0; u_L(\cdot), u_{F,1}^*(\cdot), u_{F,2}^*(\cdot)\right) \\
=&\ \varepsilon\mathbb{E}\int_0^T\big\langle B_{F,2}^{\top}(t,\alpha_{t-}){\color{black}{y_1}}(t)
+D_{F,2}^{\top}(t,\alpha_{t-}){\color{black}{z_1}}(t)+R_{F,2}(t,\alpha_{t-})u_{F,2}^*(t)  \\
&\ +S_F^{\top}(t,\alpha_{t-})u_{F,1}^*(t), u_{F,2}(t)\big\rangle dt
+\frac{\varepsilon^2}{2}\Upsilon_{F,2},\nonumber
\end{aligned}\end{equation}
where
\begin{equation}\begin{aligned}
\Upsilon_{F,2}=&\ 
\mathbb{E}\bigg\{G_F(\alpha_T)x_{F,2}^2(T)+\bar G_F(\alpha_T)\widehat x_{F,2}^2(T)
+\int_0^T\Big[Q_F(t,\alpha_{t-})x_{F,2}^2(t) \\
&\ +\bar Q_F(t,\alpha_{t-})\widehat x_{F,2}^2(t)
+{u_{F,2}^{\top}}(t)R_{F,2}(t,\alpha_{t-})u_{F,2}(t)\Big]dt\bigg\}.  \nonumber
\end{aligned}\end{equation}
According to Proposition 4.3 in Sun and Yong \cite{2014SJR}, $V_{F}^{+}(x_0)$ and $V_{F}^{-}(x_0)$ are finite,
then $\Upsilon_{F,1}\geq 0$ and $\Upsilon_{F,2}\leq 0$, which implies that \eqref{J12} is equivalent to 
\begin{equation}\left\{\begin{aligned}
&B_{F,1}^{\top}(t,\alpha_{t-}){\color{black}{y_1}}(t)
+D_{F,1}^{\top}(t,\alpha_{t-}){\color{black}{z_1}}(t)
+R_{F,1}(t,\alpha_{t-})u_{F,1}^*(t)+S_F(t,\alpha_{t-})u_{F,2}^*(t)=0, \\
&B_{F,2}^{\top}(t,\alpha_{t-}){\color{black}{y_1}}(t)
+D_{F,2}^{\top}(t,\alpha_{t-}){\color{black}{z_1}}(t)
+R_{F,2}(t,\alpha_{t-})u_{F,2}^*(t)+S_F^{\top}(t,\alpha_{t-})u_{F,1}^*(t)=0.
\end{aligned}\right.\end{equation}
As a result, we obtain \eqref{u-pq}. The proof is completed.  \hfill$\square$

\section*{Appendix C}\qquad\qquad\qquad

To prove Theorem \ref{riccati}, we first present some arguments. Recall \eqref{BRF}, set
\begin{equation*}
\begin{aligned}
\mathbb{R}_F(\cdot,i)
&\triangleq
\left(\begin{array}{cc}
\mathbb{R}_{F,11}  & \mathbb{R}_{F,12} \\
\mathbb{R}_{F,12}^{\top} & \mathbb{R}_{F,22}\\
\end{array}\right)
=\left(\begin{array}{cc}
R_{F,1}+D^{\top}_{F,1}D_{F,1}P_F & D^{\top}_{F,1}D_{F,2}P_F \\
D^{\top}_{F,2}D_{F,1}P_F & R_{F,2}+D^{\top}_{F,2}D_{F,2}P_F\\
\end{array}\right),\\
\end{aligned}
\end{equation*}
and
\begin{equation*}
\begin{aligned}
\mathbb{B}_F(\cdot,i)
&\triangleq
\left(\begin{array}{c}
\mathbb{B}_{F,1}  \\
\mathbb{B}_{F,2} \\
\end{array}\right)
=\left(\begin{array}{cc}
\left(B_{F,1}^{\top}+D_{F,1}^{\top}C\right)P_F \\
\left(B_{F,2}^{\top}+D_{F,2}^{\top}C\right)P_F \\
\end{array}\right).\\
\end{aligned}
\end{equation*}
Notice that \eqref{eq1} is a system of Riccati equations, and the solutions are expect to satisfy
$P_F(\cdot, i)\in[-\bar\varrho,\bar\varrho]$, $i\in\mathcal M$. To prove the solvability of \eqref{eq1}, we first give some estimates. Assume that $P_F(\cdot, i)\in[-\bar\varrho,\bar\varrho]$, then it follows from Assumption (F5) that
\begin{equation}\label{RRf}
-\frac{1}{\varrho}\mathbf{I}_{m_1}\leq  -\mathbb{R}^{-1}_{F,11}<\mathbf{0},\quad
\mathbf{0}<-\mathbb{R}_{F,22}^{-1}\leq  \frac{1}{\varrho}\mathbf{I}_{m_2}.
\end{equation}
Thus, 
\begin{equation}
\check{R}_{F,11}\triangleq\mathbb{R}_{F,11}-\mathbb{R}_{F,12}\mathbb{R}_{F,22}^{-1}\mathbb{R}_{F,12}^{\top}
\geq  \varrho\mathbf{I}_{m_1}, \quad \check{R}_{F,22}\triangleq\mathbb{R}_{F,22}-\mathbb{R}_{F,12}^{\top}\mathbb{R}_{F,11}^{-1}\mathbb{R}_{F,12}
\leq  -\varrho\mathbf{I}_{m_2}, \nonumber
\end{equation}
and
\begin{equation}\label{RR}
\mathbf{0}<\check{R}_{F,11}^{-1}\leq  \frac{1}{\varrho}\mathbf{I}_{m_1},\quad
  -\frac{1}{\varrho}\mathbf{I}_{m_2} \leq\check{R}_{F,22}^{-1}<\mathbf{0}.
\end{equation}
Recalling that $\mathbb{R}_F$ is invertible, applying the block matrix inversion method, we obtain the form of $\mathbb{R}_F^{-1}$ as follows
\begin{equation*}
\begin{aligned}
\mathbb{R}_F^{-1}&=
\left(\begin{array}{cc}
\check{R}_{F,11}^{-1} & -\check{R}_{F,11}^{-1}\mathbb{R}_{F,12}\mathbb{R}_{F,22}^{-1} \\
-\mathbb{R}_{F,22}^{-1}\mathbb{R}_{F,12}^{\top}\check{R}_{F,11}^{-1}
& \mathbb{R}_{F,22}^{-1}+\mathbb{R}_{F,22}^{-1}\mathbb{R}_{F,12}^{\top}\check{R}^{-1}_{F,11}
\mathbb{R}_{F,12}\mathbb{R}_{F,22}^{-1}\\
\end{array}\right).\\
\end{aligned}
\end{equation*}
Further, we define $$\mathcal H\big(\cdot,i,P_F(\cdot,i)\big)= -\mathbb{B}_F^{\top}\mathbb{R}_F^{-1}\mathbb{B}_F,\quad i\in\mathcal M,$$ 
which has the form
\begin{equation}\begin{aligned}\label{H.}
\mathcal H\big(\cdot,i,P_F(\cdot,i)\big)  
= &\ -\mathbb{B}_{F,2}^{\top}\mathbb{R}_{F,22}^{-1}\mathbb{B}_{F,2}
-\left(\mathbb{B}_{F,1}-\mathbb{R}_{F,12}\mathbb{R}_{F,22}^{-1}\mathbb{B}_{F,2}\right)^{\top}\check{R}_{F,11}^{-1}\\
&\  \times\left(\mathbb{B}_{F,1}-\mathbb{R}_{F,12}\mathbb{R}_{F,22}^{-1}\mathbb{B}_{F,2}\right) \\
{\color{black}{ =}}&{\color{black}{\ -\mathbb{B}_{F,2}^{\top}\mathbb{R}_{F,22}^{-1}\mathbb{B}_{F,2}
-\mathcal H_{\mathbb B}^{\top}\check{R}_{F,11}^{-1}\mathcal H_{\mathbb B},}}
\end{aligned}\end{equation}
{\color{black}{ where $\mathcal H_{\mathbb B}=\mathbb{B}_{F,1}-\mathbb{R}_{F,12}\mathbb{R}_{F,22}^{-1}\mathbb{B}_{F,2}$.}}


\vspace*{0.5cm}

\emph{Proof of Theorem \ref{riccati}:} We first prove the existence and uniqueness of \eqref{eq1}. Then, we discuss the wellposedness of \eqref{eq2}-\eqref{eq3}.

\textbf{The existence of \eqref{eq1}.}
Let $\mathbb{P}_F=\left(P_F(\cdot,1),\cdots, P_F(\cdot,m)\right)^{\top}\in\mathbb R^m$, and for any $(t,i)\in[0,T]\times \mathcal M$, let
\begin{equation*}
g(t,i,\mathbb{P}_F)=\Big[2A(t,i)+C^2(t,i)-\sum_{j\neq i}\lambda_{ij}\Big]P_F(t,i)+Q_F(t,i)
+\sum_{j\neq i}\lambda_{ij}P_F(t,j),
\end{equation*}
\begin{equation}\label{H1.}
\mathcal H_1\big(\cdot,i,P_F(\cdot,i)\big)=
\left\{\begin{aligned}
& -\mathbb{B}_{F,2}^{\top}\mathbb{R}_{F,22}^{-1}\mathbb{B}_{F,2},\quad P_F(\cdot,i)\in[-\bar\varrho,\bar\varrho], \\
& 0,\quad otherwise,
\end{aligned}\right.
\end{equation}
and
\begin{equation}\label{H2.}
\mathcal H_2\big(\cdot,i,P_F(\cdot,i)\big)=
\left\{\begin{aligned}
& -\mathcal H_{\mathbb B}^{\top}\check{R}_{F,11}^{-1}\mathcal H_{\mathbb B},\quad P_F(\cdot,i)\in[-\bar\varrho,\bar\varrho],\\
& 0, \quad otherwise.
\end{aligned}\right.
\end{equation}
{\color{black}{For any $P_F(\cdot,i)\in[-\bar\varrho, \bar\varrho]$, it follows from \eqref{H.}-\eqref{H2.} that
\begin{equation}
\mathcal H\big(\cdot,i,P_F(\cdot,i)\big)
=\mathcal H_1\big(\cdot,i,P_F(\cdot,i)\big)+\mathcal H_2\big(\cdot,i,P_F(\cdot,i)\big).
\end{equation} 
}}Besides, by virtue of \eqref{RRf}-\eqref{RR}, we obtain
\begin{equation}\left\{\begin{aligned} 
& -\mathbb{B}_{F,1}^{\top}\mathbb{R}_{F,11}^{-1}\mathbb{B}_{F,1}\leq \mathcal H\big(t,i,P_F(t,i)\big)\leq
-\mathbb{B}_{F,2}^{\top}\mathbb{R}_{F,22}^{-1}\mathbb{B}_{F,2},\\
&-\mathbb{B}_{F,1}^{\top}\mathbb{R}_{F,11}^{-1}\mathbb{B}_{F,1}\geq
-\frac{1}{\varrho}\mathbb{B}_{F,1}^{\top}\mathbb{B}_{F,1}\geq  -\frac{1}{\varrho}c_3\bar\varrho^2,\quad
0<-\mathbb{B}_{F,2}^{\top}\mathbb{R}_{F,22}^{-1}\mathbb{B}_{F,2}\leq  \frac{1}{\varrho}c_3\bar\varrho^2,\nonumber
\end{aligned}\right.
\end{equation}
which gives 
\begin{equation}
0\leq  \mathcal H_1\big(\cdot,i,P_F(\cdot,i)\big)\leq  \frac{1}{\varrho}c_3\bar\varrho^2,\quad
-\frac{1}{\varrho}c_3 \bar\varrho^2 \leq  \mathcal H_2\big(\cdot,i,P_F(\cdot,i)\big)\leq  0,\quad (t,i)\in[0,T]\times \mathcal M.
\end{equation}
For any $k\geq 1$ and $\big(\cdot,i,P_F(\cdot,i)\big)\in[0,T]\times\mathcal M\times[-\bar\varrho, \bar\varrho]$, let
\begin{equation}\label{Hk}
\left\{\begin{aligned} 
&\mathcal H_1^{k_1}\big(\cdot,i,P_F(\cdot,i)\big)
=\inf_{P_F^0\in\mathbb R}\big[\mathcal H_1(\cdot,i,P_F^0)+k_1\left|P_F(\cdot,i)-P_F^0\right|\big],\\
&\mathcal H_2^{k_2}\big(\cdot,i,P_F(\cdot,i)\big)
=\sup_{P_F^0\in\mathbb R}\big[\mathcal H_2(\cdot,i,P_F^0)-k_2\left|P_F(\cdot,i)-P_F^0\right|\big].\\
\end{aligned}\right.
\end{equation}
As a result, $\mathcal H_1^{k_1}\big(\cdot,i,P_F(\cdot,i)\big)$ and $\mathcal H_2^{k_2}\big(\cdot,i,P_F(\cdot,i)\big))$ are increasing and decreasing to $\mathcal H_1\big(\cdot,i,P_F(\cdot,i)\big)$ and $\mathcal H_2\big(\cdot,i,P_F(\cdot,i)\big)$, respectively, as $k_1, k_2\rightarrow +\infty$. {\color{black}{For any $P_F(\cdot,i)\in[-\bar\varrho, \bar\varrho]$, it follows from
\eqref{Hk} that
\begin{equation}\begin{aligned}
&\lim_{k_1,k_2\rightarrow +\infty}
\left[\mathcal H_1^{k_1}\big(\cdot,i,P_F(\cdot,i)\big)+\mathcal H_2^{k_2}\big(\cdot,i,P_F(\cdot,i)\big)\right] \\
=& \lim_{k_1\rightarrow +\infty} \mathcal H_1^{k_1}\big(\cdot,i,P_F(\cdot,i)\big)
+\lim_{k_2\rightarrow +\infty}\mathcal H_2^{k_2}\big(\cdot,i,P_F(\cdot,i)\big) \\
=&\ \mathcal H_1\big(\cdot,i,P_F(\cdot,i)\big)+\mathcal H_2\big(\cdot,i,P_F(\cdot,i)\big) \\
=&\ \mathcal H\big(\cdot,i,P_F(\cdot,i)\big).\nonumber
\end{aligned}\end{equation}
}}Moreover, for any $k_1, k_2\geq 1$, we obtain
\begin{equation*}
0\leq \mathcal H_1^{k_1}\big(\cdot,i,P_F(\cdot,i)\big)\leq  \frac{1}{\varrho}c_3\bar\varrho^2,\ \
-\frac{1}{\varrho}c_3\bar\varrho^2\leq \mathcal H_2^{k_2}\big(\cdot,i,P_F(\cdot,i)\big)\leq  0,\ \ \ (t,i)\in[0,T]\times \mathcal M.
\end{equation*}
Next, we solve two ordinary differential equations (ODEs) as follows
\begin{equation}\label{}
\left\{\begin{aligned}
& \dot{P}_F^{+}(t,i)+g^{+}(t,i,\mathbb{P}_F^{+})=0,\quad
 P_F^{+}(T,i)=g_0, \\
& \dot{P}_F^{-}(t,i)+g^{-}(t,i,\mathbb{P}_F^{-})=0,\quad
P_F^{-}(T,i)=-g_0, \nonumber
\end{aligned}\right.
\end{equation}
where
\begin{equation}\label{}
\left\{\begin{aligned}
&\dot{P}_F^{+}(t,i)=\frac{dP_F^{+}(t,i)}{dt},\quad \dot{P}_F^{-}(t,i)=\frac{dP_F^{-}(t,i)}{dt}, \\
&g^{+}(t,i,\mathbb{P}_F^{+})=c_1\sum_{j\in\mathcal M}P_F^{+}(t,j)+q_0+\frac{1}{\varrho}c_3\bar\varrho^2,\\
&g^{-}(t,i,\mathbb{P}_F^{-})=c_1\sum_{j\in\mathcal M}P_F^{-}(t,j)-q_0-\frac{1}{\varrho}c_3\bar\varrho^2,\nonumber
\end{aligned}\right.
\end{equation}
$(t,i)\in[0,T]\times \mathcal M$. Then we have
\begin{equation*}\left\{\begin{aligned}
&P_F^{+}(t,i)=g_0e^{c_1m(T-t)}+\frac{1}{c_1m}\Big(q_0+\frac{1}{\varrho}c_3\bar\varrho^2\Big)
\left[e^{c_1m(T-t)}-1\right], \\
&P_F^{-}(t,i)=-P_F^{+}(t,i), 
\end{aligned}\right.\end{equation*}
which implies 
\begin{equation*}\left\{\begin{aligned}
&0<P_F^{+}(\cdot,i)\leq  P_F^{+}(0,i)=\bar\varrho, \\
& -\bar\varrho=P_F^{-}(0,i)\leq  P_F^{-}(\cdot,i)<0.
\end{aligned}\right.\end{equation*}
The following ODE
\begin{equation}\left\{\begin{aligned}\label{PFk-}
& \dot{P}_F^{k_1,k_2}(t,i)+g\left(t,i,\mathbb{P}_F^{k_1,k_2}\right)+\mathcal H_1^{k_1}\left(t,i,P_F^{k_1,k_2}\right)
+\mathcal H_2^{k_2}\left(t,i,P_F^{k_1,k_2}\right)=0,\\
&P_F^{k_1,k_2}(T,i)=G_F(T,i),\quad \quad (t,i)\in[0,T]\times \mathcal M  
\end{aligned}\right.\end{equation}
admits a unique solution $\mathbb{P}_F^{k_1,k_2}=(P_F^{k_1,k_2}(\cdot,1),\cdots, P_F^{k_1,k_2}(\cdot,m))^{\top}$, since $g(\cdot,i,\mathbb{P}_F^{k_1,k_2})$, $\mathcal H_1^{k_1}\left(\cdot,i,P_F^{k_1,k_2}(\cdot,i)\right)$ and 
$\mathcal H_2^{k_2}\left(\cdot,i,P_F^{k_1,k_2}(\cdot,i)\right)$ satisfy Lipschitz condition with respect to $\mathbb{P}_F^{k_1,k_2}$. The solution of \eqref{PFk-} is
\begin{equation}\begin{aligned}
P_F^{k_1,k_2}(t,i)=&\ G_F(T,i)+\int_t^T\Big[g\left(t,i,\mathbb{P}_F^{k_1,k_2}\right)
+\mathcal H_1^{k_1}\left(t,i,P_F^{k_1,k_2}(t,i)\right)  \\
&\ +\mathcal H_2^{k_2}\left(t,i,P_F^{k_1,k_2}(t,i)\right)\Big]dt.\nonumber
\end{aligned}\end{equation}
Recall that
$\mathcal H_1^{k_1}\left(\cdot, i, P_F^{k_1,k_2}(\cdot,i)\right)$ and $\mathcal H_2^{k_2}\left(\cdot,i,P_F^{k_1,k_2}(\cdot,i)\right)$ are increasing and decreasing to $\mathcal H_1\left(\cdot,i,P_F^{k_1,k_2}(\cdot,i)\right)$ and $\mathcal H_2\left(\cdot,i,P_F^{k_1,k_2}(\cdot,i)\right)$, respectively. Consequently, $P_F^{k_1,k_2}(\cdot, i)$ is increasing and decreasing with respect to $k_1$ and $k_2$, respectively. For any $k_1,k_2\geq 1$ and $i\in\mathcal M$, we have
\begin{equation*}\begin{aligned}
& g\left(\cdot,i,{P}_F^{+}(\cdot,i)\right)+\mathcal H_1^{k_1}\left(\cdot,i,{P}_F^{+}(\cdot,i)\right)
+\mathcal H_2^{k_2}\left(\cdot,i,{P}_F^{+}(\cdot,i)\right)\leq g^{+}(\cdot,i,\mathbb{P}_F^{+}),\\
&g^{-}(\cdot,i,\mathbb{P}_F^{-})\leq
g\left(\cdot,i,{P}_F^{-}(\cdot,i)\right)+\mathcal H_1^{k_1}\left(\cdot,i,{P}_F^{-}(\cdot,i)\right)
+\mathcal H_2^{k_2}\left(\cdot,i,{P}_F^{-}(\cdot,i)\right).
\end{aligned}\end{equation*}
Thus, for any $k_1,k_2\geq 1$ and $(t,i)\in[0,T]\times \mathcal M$, we obtain
\begin{equation*}
-\bar\varrho\leq  P_F^{-}(\cdot,i)\leq P_F^{k_1,k_2}(\cdot,i)\leq  P_F^{+}(\cdot,i)\leq  \bar\varrho.
\end{equation*}
Recall that $P_F^{k_1,k_2}(\cdot,i)$ is monotonic and bounded, then define
\begin{equation*}
P_F(\cdot,i)=\lim_{k_2\rightarrow +\infty}\lim_{k_1\rightarrow +\infty}P_F^{k_1,k_2}(\cdot,i),
\quad (t,i)\in[0,T]\times \mathcal M,
\end{equation*}
where $P_F(\cdot,i)\in[-\bar\varrho,\bar\varrho]$ and satisfies \eqref{eq1}.
Thus, the existence of \eqref{eq1} is established.

\textbf{The uniqueness of \eqref{eq1}.}
Assume that $P_F(\cdot,i)$ and $P_F'(\cdot,i)$ are two solutions of \eqref{eq1}, $i\in\mathcal M$. Set
$\Delta_{P_F}(\cdot,i)=P_F(\cdot,i)-P_F'(\cdot,i)$, which satisfies
\begin{equation*}\label{}
\left\{\begin{aligned}
& \dot{\Delta}_{P_F}(t,i)+\left(2A(t,i)+C^2(t,i)\right)\Delta_{P_F}(t,i)
-\mathbb{H}^{\top}({\Delta}_{P_F},{\Delta}_{P_F})(t,i)
\mathbb{R}_F^{-1}(t,i)  \\
&\quad \times\mathbb{H}(P_F,P_F)(t,i)
-\mathbb{H}^{\top}(P_F',P_F')(t,i)
\left(\mathbb{R}_F'(t,i)\right)^{-1}\mathbb{H}({\Delta}_{P_F},{\Delta}_{P_F})(t,i) \\
&\quad +\left(\mathbf{B_F}(t,i)P_F'(t,i)+\mathbf{D_F}(t,i)P_F'(t,i)C(t,i)\right)
\mathbb{R}_F^{-1}(t,i)\mathbf{D_F^{\top}}(t,i)\Delta_{P_F}(t,i)\mathbf{D_F}(t,i)  \\
&\quad \times \left(\mathbb{R}_F'(t,i)\right)^{-1}\mathbb{H}(P_F,P_F)(t,i) 
+\sum_{j\in\mathcal M}\lambda_{ij}\left[\Delta_{P_F}(t,j)-\Delta_{P_F}(t,i)\right]=0,\\
&\Delta_{P_F}(T,i)=0,\quad (t,i)\in[0,T]\times \mathcal M,
\end{aligned}\right.
\end{equation*}
where $\mathbb{H}(X,Y)(\cdot,i)=\mathbf{B_F^{\top}}X+\mathbf{D_F^{\top}}YC$,
$\mathbb{R}_F(\cdot,i)=\mathbf{R_F}+\mathbf{D_F^{\top}}P_F\mathbf{D_F}$,
$\mathbb{R}_F'(\cdot,i)=\mathbf{R_F}+\mathbf{D_F^{\top}}P_F'\mathbf{D_F}$.
{\color{black}{For any $(t,i)\in[0,T]\times \mathcal M$, we obtain $P_F(t,i), P_F'(t,i)\in[-\bar\varrho, \bar\varrho]$. Since the coefficients $\mathbf{R_F}$ and $\mathbf{D_F}$ are uniformly bounded, $\left(\mathbb{R}_F(\cdot,i)\right)^{-1}$ and $\left(\mathbb{R}_F'(\cdot,i)\right)^{-1}$    
are uniformly bounded.}} By virtue of Gronwall's inequality, it yields that $\Delta_{P_F}(\cdot,i)=0$, $i\in\mathcal M$. This proves the uniqueness of \eqref{eq1}.

Based on the above analysis, \eqref{eq1} is uniquely solvable. Similar to \eqref{eq1}, we can also prove that \eqref{eq2} admits a unique solution. Further, applying Theorem 3.4 of Nguyen et al. \cite{Yin2020}, we get that \eqref{eq3} is uniquely solvable.

Putting \eqref{pq-x} into \eqref{u-pq} and the first equation of \eqref{p}, and assuming that $\mathbf{R_F}(\cdot,i)$\\ 
$+\mathbf{D_F^{\top}}(\cdot,i)\mathbf{D_F}(\cdot,i)P_F(\cdot,i)$ is invertible, $i\in\mathcal M$, we derive the closed-loop representation \eqref{closed-ll} with \eqref{closed-x}.
Besides, the unique solvability of \eqref{closed-ll} is also obtained based on \eqref{eq1}-\eqref{eq3}. \hfill$\square$

\section*{Appendix D} \qquad\qquad\qquad\qquad\qquad\qquad\qquad\qquad\qquad

\emph{Proof of Lemma \ref{L4.3}:}
{\color{black}{
Firstly, we verify the conditions stated in \eqref{2.2}, \eqref{2.4}, and \eqref{2.5}. 

1) By virtue of the initial and terminal conditions in \eqref{2fbsde}, we have
\begin{equation}
\left\{\begin{aligned}
& \left|\Psi_1(y_1,y_2)-\Psi_1(y_1',y_2)\right|=0, \quad 
\left|\Psi_2(y_1,y_2)-\Psi_2(y_1,y_2')\right|=0, \\
& \left\langle \Psi_1(y_1,y_2)-\Psi_1(y_1',y_2), \Delta y_1\right\rangle=0, \\
& \left\langle \Psi_2(y_1,y_2)-\Psi_2(y_1,y_2'), \Delta y_2\right\rangle=0, \\
\end{aligned}\right.\nonumber
\end{equation}
which imply that the first inequalities of \eqref{2.2}, \eqref{2.4}, and \eqref{2.5} hold with 
$\widetilde M_1=0$.

2) Under Assumption (F3), we can also check that the second and the third inequalities of \eqref{2.2}, and the second inequalities of \eqref{2.4}-\eqref{2.5} hold, with $\widetilde G_1(i)=\sqrt{G_F(i)}, \widetilde G_1'(i)=0, \widetilde A_1(\cdot,i)=\sqrt{Q_F(\cdot,i)}, \widetilde A_1'(\cdot,i)=0$, $K_0'=\max\limits_{(t,i)\in[0,T]\times \mathcal M}\Big\{\sqrt{G_F(i)},$\\ $\sqrt{Q_F(t,i)}\Big\}+1>0$ and $\nu_1=1$, $i\in\mathcal M$.

3) For any $\widetilde B_1(\cdot,i), \widetilde C_1(\cdot,i), \widetilde B_1'(\cdot,i), \widetilde C_1'(\cdot,i)\in\mathbb R$, $i\in\mathcal M$, the fourth inequality of \eqref{2.2} can be verified with $\mu_1=0$ (that is, $\frac{1}{\mu_1}=+\infty$). Under Assumption (F3), with $\Gamma_1=\left(g_1^{\top}, b_1^{\top}, \sigma_1^{\top}\right)^{\top}$, 
for any $(t, i)\in [0,T]\times\mathcal M$, any $\Theta_1=(x_1,y_1,z_1)$,
$\widehat\Theta_1=(\widehat x_1,\widehat y_1,\widehat z_1)$, 
$\Theta_1'=\big(x_1',y_1',z_1'\big)$,
$\widehat\Theta_1'=\big(\widehat x_1',\widehat y_1',\widehat z_1'\big)$, 
$\Theta_2=(x_2,y_2,z_2)$, $\widehat\Theta_2=\left(\widehat x_2,\widehat y_2,\widehat z_2\right)$, 
we have
\begin{equation}
\begin{aligned}
&\mathbb{E}\left[\left\langle \Gamma_1\left(t,\Theta_1,\widehat\Theta_1,\Theta_2,\widehat\Theta_2,i\right)
-\Gamma_1\left(t,\Theta_1',\widehat\Theta_1',\Theta_2,\widehat\Theta_2,i\right),\Delta\Theta_1\right\rangle
\Big|\mathcal F_{t-}^{\alpha}\right] \\
=&\ \mathbb{E}\Bigg[\Bigg\langle 
\left(\begin{array}{c}
-\left(A\Delta y_1+\bar A\Delta\widehat y_1+C\Delta z_1+\bar C\Delta\widehat z_1
+Q_F\Delta x_1+\bar Q_F\Delta\widehat x_1\right) \\
A\Delta x_1+\bar A\Delta\widehat x_1-\mathbf{B_F}\mathbf{R_F^{-1}}\mathbf{B_F^{\top}}\Delta y_1
-\mathbf{B_F}\mathbf{R_F^{-1}}\mathbf{D_F^{\top}}\Delta z_1 \\
C\Delta x_1+\bar C\Delta\widehat x_1-\mathbf{D_F}\mathbf{R_F^{-1}}\mathbf{B_F^{\top}}\Delta y_1
-\mathbf{D_F}\mathbf{R_F^{-1}}\mathbf{D_F^{\top}}\Delta z_1 \\
\end{array}\right), \\
&\ \left(\begin{array}{c}
\Delta x_1 \\
\Delta y_1  \\
\Delta z_1  \\
\end{array}\right)\Bigg\rangle\Bigg|\mathcal F_{t-}^{\alpha}\Bigg],\\
=&-\mathbb{E}\left[Q_F\left|\Delta x_1\right|^2\Big|\mathcal F_{t-}^{\alpha}\right] -\mathbb{E}\bigg[\Big|\sqrt{\mathbf{B_F}\mathbf{R_F^{-1}}\mathbf{B_F^{\top}}}\Delta y_1
+\sqrt{\mathbf{D_F}\mathbf{R_F^{-1}}\mathbf{D_F^{\top}}}\Delta z_1\Big|^2\Big|\mathcal F_{t-}^{\alpha}\bigg] \\
\leq &-\mathbb{E}\left[Q_F(t,i)\left|\Delta x_1\right|^2\Big|\mathcal F_{t-}^{\alpha}\right] \\
\leq & -\nu_1\mathbb{E}\left[\left|\widetilde A_1(t,i)\Delta x_1\right|^2\Big|\mathcal F_{t-}^{\alpha}\right], \\
\end{aligned}\nonumber
\end{equation}
which implies that the third inequality of \eqref{2.4} holds. Using a similar argument, the third inequality of \eqref{2.5} also holds.

Based on above discussion, \eqref{2fbsde} satisfy the conditions in \eqref{2.2}, \eqref{2.4}, and \eqref{2.5}.

{\color{black}{Next, we verify the conditions in \eqref{A3-1} and \eqref{A3-2}.}} }}

1) We check that the first and the second inequalities of \eqref{A3-1} and \eqref{A3-2} hold with 
$\widetilde M_2=\widetilde G_2(i)=\widetilde G_2'(i)=0$, $i\in\mathcal M$. 

2) Under Assumption (L2)-(i), for any $(t, i)\in[0,T]\times\mathcal M$, any $\Theta_1=({\color{black}{x_1,y_1,z_1}})$, $\widehat\Theta_1=({\color{black}{\widehat x_1,\widehat y_1,\widehat z_1}})$, $\Theta_2=({\color{black}{x_2,y_2,z_2}})$, $\widehat\Theta_2=\left({\color{black}{\widehat x_2,\widehat y_2,\widehat z_2}}\right)$, $\Theta_2'=({\color{black}{x_2',y_2',z_2'}})$, $\widehat\Theta_2'=\left({\color{black}{\widehat x_2',\widehat y_2',\widehat z_2'}}\right)$,
we have
\begin{equation}\label{}
\begin{aligned}
& \left|\Gamma_1\left(t,\Theta_1,\widehat\Theta_1,\Theta_2,\widehat\Theta_2,i\right)
-\Gamma_1\left(t,\Theta_1,\widehat\Theta_1,\Theta_2',\widehat\Theta_2',i\right)\right| \\
= &\left|
\left(\begin{array}{c}
0 \\
B_LR_L^{-1}B_L^{\top}\Delta {\color{black}{y_2}}+B_LR_L^{-1}D_L^{\top}\Delta {\color{black}{z_2}} \\
D_LR_L^{-1}B_L^{\top}\Delta {\color{black}{y_2}}+D_LR_L^{-1}D_L^{\top}\Delta {\color{black}{z_2}} \\
\end{array}\right)
\right| \\
\leq & \left|
\left(\begin{array}{c}
0 \\
\sqrt{B_LR_L^{-1}B_L^{\top}} \\
\sqrt{ D_LR_L^{-1}D_L^{\top}} \\
\end{array}\right)
\right|
\left|\left(0,\sqrt{B_LR_L^{-1}B_L^{\top}},\sqrt{D_LR_L^{-1}D_L^{\top}}\right)
\left(\begin{array}{c}
\Delta {\color{black}{x_2}} \\
\Delta {\color{black}{y_2}}  \\
\Delta {\color{black}{z_2}}  \\
\end{array}\right)\right|, \nonumber
\end{aligned}
\end{equation}
then the third inequality of \eqref{A3-1} holds with 
$\widetilde L_2(\cdot,i)=\Big(0,\sqrt{B_L(\cdot,i)R_L^{-1}(\cdot,i)B_L^{\top}(\cdot,i)}$, 
$\sqrt{D_L(\cdot,i)R_L^{-1}(\cdot,i)D_L^{\top}(\cdot,i)}\Big)$, $\widetilde L_2'(\cdot,i)=0$, 
and $K_0=\max\limits_{(t,i)\in[0,T]\times \mathcal M}\Big|\widetilde L_2(t,i)\Big|+1$.

3) Under Assumptions (L1) and (L2)-(i), for any $(t, i)\in[0,T]\times \mathcal M$, and any $\Theta, {\widehat\Theta}, \Theta', \widehat\Theta'\in{\mathbb R}^{6}$, we obtain
\begin{equation}\label{}
\begin{aligned}
& \mathbb{E}\left[\left\langle \Gamma\left(t, \Theta,\widehat{\Theta},i\right)-\Gamma\left(t, \Theta',\widehat\Theta',i\right),\Lambda\Delta\Theta\right\rangle\Big|\mathcal F_{t-}^{\alpha}\right] \\
\leq & -\mathbb{E}\left[\left|\left(0,\sqrt{B_LR_L^{-1}B_L^{\top}},\sqrt{D_LR_L^{-1}D_L^{\top}}\right)
\left(\begin{array}{c}
\Delta {\color{black}{x_2}} \\
\Delta {\color{black}{y_2}}  \\
\Delta {\color{black}{z_2}}  \\
\end{array}\right)\right|^2\Bigg|\mathcal F_{t-}^{\alpha}\right],
\nonumber
\end{aligned}
\end{equation}
which implies that the third inequality of \eqref{A3-2} is verified with $\mu_2=1$. \hfill$\square$

\section*{Appendix E}\qquad\qquad\qquad

\emph{Proof of Theorem \ref{th-F}:}
For any $\widetilde u_L(\cdot)\in \mathcal U_L$, the corresponding state is $\widetilde x_1(\cdot)$, we have
\begin{equation}\label{DJ}
\begin{aligned}
& J_L\big(x_0; \widetilde u_L(\cdot), \mathbf{u^*_{F_{1,2}}}(\cdot)\big)-J_L\big(x_0; u_L^*(\cdot), \mathbf{u^*_{F_{1,2}}}(\cdot)\big) \\
=&\ \frac{1}{2}\mathbb{E}\bigg\{\mathlarger\int_0^T\Big[Q_L(t,\alpha_{t-})
\left({\color{black}{\widetilde x_1}}(t)-{\color{black}{x_1}}(t)\right)^2
+\bar Q_L(t,\alpha_{t-})\left({\color{black}{\widehat{\widetilde x}_1}}(t)
-{\color{black}{\widehat x_1}}(t)\right)^2 \\
&\ +\big(\widetilde u_L(t)-u_L^*(t)\big)^{\top}R_L(t,\alpha_{t-})\big(\widetilde u_L(t)-u_L^*(t)\big)\Big]dt
+G_L(\alpha_{T})\left({\color{black}{\widetilde x_1}}(T)-{\color{black}{x_1}}(T)\right)^2\\
&\ +\bar G_L(\alpha_{T})\left({\color{black}{\widehat{\widetilde x}_1}}(T)
-{\color{black}{\widehat x_1}}(T)\right)^2\bigg\}
+\Upsilon_L, \nonumber
\end{aligned}
\end{equation}
where 
\begin{equation}\label{Upsilon}
\begin{aligned}
\Upsilon_L=&\ \mathbb{E}\bigg\{\mathlarger\int_0^T\Big[Q_L(t,\alpha_{t-}){\color{black}{x_1}}(t)
\left({\color{black}{\widetilde x_1}}(t)-{\color{black}{x_1}}(t)\right)+\bar Q_L(t,\alpha_{t-}){\color{black}{\widehat x_1}}(t)\left({\color{black}{\widehat{\widetilde x}_1}}(t)-{\color{black}{\widehat x_1}}(t)\right) \\
&\ +u_L^{*\top}(t)R_L(t,\alpha_{t-})\big(\widetilde u_L(t)-u_L^*(t)\big)\Big]dt
+G_L(\alpha_{T}){\color{black}{x_1}}(T)\left({\color{black}{\widetilde x_1}}(T)-{\color{black}{x_1}}(T)\right)  \\
&\ +\bar G_L(\alpha_{T}){\color{black}{\widehat x_1}}(T)
\left({\color{black}{\widehat{\widetilde x}_1}}(T)-{\color{black}{\widehat x_1}}(T)\right)\bigg\},
\end{aligned}
\end{equation}
${\color{black}{x_1}}(\cdot)$ the unique solution of \eqref{2fbsde}. Let $\widetilde y_1(\cdot)$ be the backward state corresponding to $\widetilde u_L(\cdot)$. Using the generalized It\^{o}'s formula to $\left({\color{black}{\widetilde x_1}}(\cdot)-{\color{black}{x_1}}(\cdot)\right){\color{black}{y_2}}(\cdot)+
{\color{black}{x_2}}(\cdot)\left({\color{black}{\widetilde y_1}}(\cdot)-{\color{black}{y_1}}(\cdot)\right)$, it yields 
\begin{equation}\label{EE}
\begin{aligned}
&\ \mathbb{E}\Big[G_L(\alpha_{T}){\color{black}{x_1}}(T)\left({\color{black}{\widetilde x_1}}(T)
-{\color{black}{x_1}}(T)\right)+\bar G_L(\alpha_{T}){\color{black}{\widehat x_1}}(T)
\left({\color{black}{\widehat{\widetilde x}_1}}(T)-{\color{black}{\widehat x_1}}(T)\right)\Big] \\
=&\ \mathbb{E}\bigg\{\int_0^T\Big[B_L(t,\alpha_{t-})u_L(t){\color{black}{y_2}}(t)
+B_L(t,\alpha_{t-})R_L^{-1}(t,\alpha_{t-})\big(B_L^{\top}(t,\alpha_{t-}){\color{black}{y_2}}(t)  \\
&\ +D_L^{\top}(t,\alpha_{t-}){\color{black}{z_2}}(t)\big){\color{black}{y_2}}(t)   -\left({\color{black}{\widetilde x_1}}(t)-{\color{black}{x_1}}(t)\right)\left(Q_L(t,\alpha_{t-}){\color{black}{x_1}}(t)
+\bar Q_L(t,\alpha_{t-}){\color{black}{\widehat x_1}}(t)\right) \\
&\ +D_L(t,\alpha_{t-})\widetilde u_L(t){\color{black}{z_2}}(t)
+D_L(t,\alpha_{t-})R_L^{-1}(t,\alpha_{t-})\big(B_L^{\top}(t,\alpha_{t-}){\color{black}{y_2}}(t) \\
&\ +D_L^{\top}(t,\alpha_{t-}){\color{black}{z_2}}(t)\big){\color{black}{z_2}}(t)\Big]dt\bigg\}.
\end{aligned}
\end{equation}
Substituting \eqref{EE} and \eqref{uL*} into \eqref{Upsilon}, we obtain
$\Upsilon_L=0. $
Thus, for any $\widetilde u_L(\cdot)\in \mathcal U_L$, we have
$$J_L\big(x_0; \widetilde u_L(\cdot), \mathbf{u^*_{F_{1,2}}}(\cdot)\big)-J_L\big(x_0; u_L^*(\cdot), \mathbf{u^*_{F_{1,2}}}(\cdot)\big)\geq 0,$$
which implies that $u_L^*(\cdot)$ is the unique optimal strategy of the leader. \hfill$\square$

\section*{Appendix F}\qquad\qquad\qquad

\emph{Proof of Theorem \ref{th3.5}:}
\eqref{PL-1} can be written as 
\begin{equation}\label{PL-1b}
\left\{\begin{aligned}
& \dot{\mathbf{P}}_L(t)+\mathbf{P}_L(t)\pmb{\mathcal A}(t)+\pmb{\mathcal A}(t)\mathbf{P}_L(t)
+\mathbf{P}_L(t)\pmb{\mathcal B}_1(t)\mathbf{P}_L(t)+\pmb{\mathcal Q}(t)  \\
&\qquad +\sum_{k=1}^{m-1}\mathcal N_k\mathbf{P}_L(t)\mathcal N_k^{\top}=\textbf{0}, \\
& \mathbf{P}_L(T)=\pmb{\mathcal G}, 
\end{aligned}\right.
\end{equation}
where 
\begin{equation} 
\left\{\begin{aligned}
& \mathbf{P}_L(\cdot)=diag\big(P_L(\cdot,1),\cdots,P_L(\cdot,m)\big), \ \
\pmb{\mathcal G}=diag\big(\mathcal G(1),\cdots,\mathcal G(m)\big), \\
& \pmb{\mathcal B}_1(\cdot)=diag\big(\mathcal B_1(\cdot,1),\cdots,\mathcal B_1(\cdot,m)\big), \ \
\pmb{\mathcal Q}(\cdot)=diag\big(\mathcal Q(\cdot,1),\cdots,\mathcal Q(\cdot,m)\big), \\
& \pmb{\mathcal A}(\cdot)=diag\big(\mathcal A(\cdot,1)+0.5\lambda_{11}\textbf{I},\cdots, \mathcal A(\cdot,m)+0.5\lambda_{mm}\textbf{I}\big), \nonumber
\end{aligned}\right.
\end{equation}
\[\mathcal N_1=
\left(\begin{array}{ccccc}
0&\sqrt{\lambda_{12}}&0& \cdots   &0\\
0 & 0&\sqrt{\lambda_{23}}& \cdots  &0 \\
\vdots &\vdots & \vdots & \ddots & \vdots  \\
0&0&0&\cdots &\sqrt{\lambda_{m-1,m}} \\
\sqrt{\lambda_{m,1}}& 0& 0&\cdots &0\\
\end{array}\right),\]

\[\mathcal N_2=
\left(\begin{array}{cccccc}
0 &0 &\sqrt{\lambda_{13}} &0& \cdots   &0 \\
0 &0 &0 &\sqrt{\lambda_{24}}& \cdots   &0  \\
\vdots &\vdots &\vdots & \vdots &\ddots & \vdots  \\
0&0&0&0&\cdots &\sqrt{\lambda_{m-2,m}} \\
\sqrt{\lambda_{m-1,1}} &0 &0 &0& \cdots   &0 \\
0 &\sqrt{\lambda_{m,2}} &0 &0& \cdots   &0 \\
\end{array}\right),\quad \cdots, \]

\[\mathcal N_{k}=
\left(\begin{array}{cccccccc}
0  &\cdots &0 & \sqrt{\lambda_{1,k+1}}  &\cdots &0  \\
\vdots  &  &\vdots &\vdots  &\ddots &\vdots \\
0  &\cdots &0 &0  &\cdots & \sqrt{\lambda_{m-k,m}}   \\
\sqrt{\lambda_{m-(k-1),1}}  &\cdots &0 &0  &\cdots &0 \\
\vdots  &\ddots &\vdots &\vdots  & &\vdots \\
0  &\cdots &\sqrt{\lambda_{m,k}} &0  &\cdots &0 \\
\end{array}\right),\quad \cdots, \]
and
\[\mathcal N_{m-1}=
\left(\begin{array}{ccccc}
0 &0&\cdots &0 &\sqrt{\lambda_{1,m}}  \\
\sqrt{\lambda_{21}} &0&\cdots &0 &0 \\
0 &\sqrt{\lambda_{32}}&\cdots &0 &0   \\
\vdots &\vdots  & \ddots &\vdots &\vdots \\
0 &0&\cdots &\sqrt{\lambda_{m,m-1}} &0  \\
\end{array}\right).\ \]
For any $i\in\mathcal M$, $\mathcal B_1(\cdot,i)< \textbf{0}$, $\mathcal Q(\cdot,i)\geq \textbf{0}$ and $\mathcal G(i)\geq \textbf{0}$, then $\pmb{\mathcal B}_1(\cdot)<\textbf{0}$, $\pmb{\mathcal Q}(\cdot)\geq \textbf{0}$ and $\pmb{\mathcal G}\geq \textbf{0}$. With this fact, it follows from Theorem 7.2 of Chapter 6 in Yong and Zhou \cite{ZY2003} that \eqref{PL-1b} admits a unique solution, which implies that \eqref{PL-1} is uniquely solvable. Similar to \eqref{PL-1}, the unique solvability of \eqref{PL-2} can be obtained. With the solvability of \eqref{PL-1} and \eqref{PL-2}, applying Theorem \ref{eufbsde}, one derives that \eqref{PL-3} is uniquely solvable.

Putting \eqref{YZ-X} into \eqref{opuL} and the first equation of \eqref{XY}, we derive the closed-loop representation \eqref{closed-ld} with \eqref{closed-xx}. Besides, the unique solvability of \eqref{closed-ld} is also obtained based on \eqref{PL-1}-\eqref{PL-3}. 
\hfill$\square$

\end{document}

%% file: ex_shared.tex
\usepackage{lipsum}
\usepackage{amsfonts}
\usepackage{graphicx}
\usepackage{epstopdf}
\usepackage{algorithmic}
\ifpdf
  \DeclareGraphicsExtensions{.eps,.pdf,.png,.jpg}
\else
  \DeclareGraphicsExtensions{.eps}
\fi

\newsiamremark{remark}{Remark}
\newsiamremark{hypothesis}{Hypothesis}
\crefname{hypothesis}{Hypothesis}{Hypotheses}
\newsiamthm{claim}{Claim}
\newsiamremark{fact}{Fact}
\crefname{fact}{Fact}{Facts}

\headers{LQ mixed Stackelberg-zero-sum game}{Pengyan Huang, Na Li, Zuo Quan Xu, and Harry Zheng}

\title{Linear-quadratic mixed Stackelberg-zero-sum game for mean-field regime switching system\thanks{Submitted to the editors DATE.
\funding{This work was supported by the National Natural Science Foundation of China under Grants 12571475, 12401582 and 12571480, and the Natural Science Foundation of Shandong Province under Grant ZR2024QA240.}}}

\author{Pengyan Huang\thanks{School of Statistics and Mathematics, Shandong University of Finance and Economics,
Jinan, China (\email{huang\_pengyan@163.com}).}
\and Na Li\thanks{Corresponding author. School of Mathematical Sciences, Dalian University of Technology, Dalian, China 
  (\email{lina2025@dlut.edu.cn}).}
\and Zuo Quan Xu\thanks{Department of Applied Mathematics, The Hong Kong Polytechnic University, Hong Kong, China
  (\email{maxu@polyu.edu.hk}).}
\and Harry Zheng\thanks{Department of Mathematics, Imperial College, London SW7 2AZ, UK
  (\email{h.zheng@imperial.ac.uk}).}
}

\usepackage{amsopn}
